\newtheorem{thm}{Theorem}[section]
\newtheorem{lem}[thm]{Lemma}
\newtheorem{cor}[thm]{Corollary}
\newtheorem{prop}[thm]{Proposition}
\theoremstyle{definition} 
\newtheorem{defn}[thm]{Definition}
\newtheorem{q}[thm]{Question}
\newtheorem{ex}[thm]{Example}
\newtheorem{rem}[thm]{Remark}
\numberwithin{equation}{section}
\newcommand{\secref}[1]{Section~\textup{\ref{#1}}}
\newcommand{\thmref}[1]{Theorem~\textup{\ref{#1}}}
\newcommand{\corref}[1]{Corollary~\textup{\ref{#1}}}
\newcommand{\lemref}[1]{Lemma~\textup{\ref{#1}}}
\newcommand{\propref}[1]{Proposition~\textup{\ref{#1}}}
\newcommand{\defnref}[1]{Definition~\textup{\ref{#1}}}
\newcommand{\KK}{\mathcal K}
\newcommand{\MM}{\mathcal M}
\newcommand{\NN}{\mathcal N}
\newcommand{\C}{\mathbb C}
\newcommand{\variso}{\overset{\simeq}{\longrightarrow}}
\renewcommand{\bar}{\overline}
\newcommand{\what}{\widehat}
\newcommand{\wilde}{\widetilde}
\newcommand{\inv}{^{-1}}
\renewcommand{\epsilon}{\varepsilon}
\renewcommand{\:}{\colon}
\renewcommand{\subset}{\subseteq}
\newcommand{\xt}{\otimes}
\newcommand{\xm}{\otimes_{\max}}
\renewcommand{\)}{\textup)}
\newcommand{\rt}{\textup{rt}}
\newcommand{\lt}{\textup{lt}}
\newcommand{\id}{\text{\textup{id}}}
\newcommand{\triv}{\delta_{\textup{triv}}}
\newcommand{\rify}{{}^R}
\newcommand{\sify}{{}^S}
\newcommand{\deltag}{\delta_G}
\newcommand{\deltagr}{\delta_G^R}
\newcommand{\direct}[1]{\operatorname{R}(#1)}
\newcommand{\perturb}[1]{\wilde{\operatorname R}(#1)}
\DeclareMathOperator{\ad}{Ad}
\DeclareMathOperator*{\spn}{span}
\DeclareMathOperator*{\clspn}{\overline{\spn}}
\DeclareMathOperator{\cpc}{CPC}
\DeclareMathOperator{\cpa}{CPA}
\DeclareMathOperator{\relcom}{C}
\DeclareMathOperator{\cpar}{CPAR}
\DeclareMathOperator{\c-r}{C-R}
\newcommand{\midtext}[1]{\quad\text{#1}\quad}
\newcommand{\righttext}[1]{\quad\text{#1 }}
\newcommand{\eg}{\emph{e.g.}}
\newcommand{\ie}{\emph{i.e.}}
\newcommand{\cgalg}{$C_0(G)$-decorated algebra}
\newcommand{\csgalg}{$C^*(G)$-decorated algebra}
\newcommand{\kalg}{$\KK$-dec\-or\-ated algebra}
\newcommand{\dalg}{$D$-decorated algebra}
\newcommand{\cgco}{$C_0(G)$-fixed coaction}
\newcommand{\kco}{$\KK$-fixed coaction}
\newcommand{\dco}{$D$-fixed coaction}
\newcommand{\cgcor}{$C_0(G)$-fixed R-coaction}
\newcommand{\kcor}{$\KK$-fixed R-coaction}
\newcommand{\dcor}{$D$-fixed R-coaction}
\newcommand{\coaction}{coaction}
\begin{document}
\title
{R-coactions on $C^*$-algebras}
\author[Kaliszewski]{S. Kaliszewski}
\address{School of Mathematical and Statistical Sciences
\\Arizona State University
\\Tempe, Arizona 85287}
\email{kaliszewski@asu.edu}
\author[Landstad]{Magnus~B. Landstad}
\address{Department of Mathematical Sciences\\
Norwegian University of Science and Technology\\
NO-7491 Trondheim, Norway}
\email{magnus.landstad@ntnu.no}
\author[Quigg]{John Quigg}
\address{School of Mathematical and Statistical Sciences
\\Arizona State University
\\Tempe, Arizona 85287}
\email{quigg@asu.edu}

\date{August 19, 2021}

\subjclass[2000]{Primary  46L55; Secondary 46M15}
\keywords{
Crossed product,
action,
coaction,
tensor product}

\begin{abstract}
We give the beginnings of the development of a theory of
what we call ``R-coactions''
of a locally compact group
on a $C^*$-algebra.
These are the coactions taking values in the maximal tensor product,
as originally proposed by Raeburn.
We show that the theory has some gaps as compared to the more familiar theory of standard coactions.
However, we indicate how we needed to develop some of the basic properties of R-coactions as a tool in our program involving the use of coaction functors in the study of the Baum-Connes conjecture.
\end{abstract}
\maketitle

\section{Introduction}\label{intro}

Coactions of locally compact groups on $C^*$-algebras have been around for almost 50 years,
and the conventions have evolved in various ways.
Initially, the theory was developed spatially, using reduced group $C^*$-algebras and reduced crossed products.
In the early 1990's, Raeburn \cite{rae:representation}
revolutionized the theory, using universal properties and full group $C^*$-algebras and crossed products.
There was one aspect of Raeburn's new conventions, however, that did not seem to catch on: using maximal rather than minimal tensor products.
More precisely, it has become customary to refer to
Raeburn's innovation as
\emph{full coactions}, which are homomorphisms
$\delta\:A\to M(A\xt C^*(G))$ satisfying certain conditions,
and to the older style as \emph{reduced coactions},
which are homomorphisms
$\delta\:A\to M(A\xt C^*_r(G))$.
Raeburn himself, on the other hand, proposed
that coactions should map into $M(A\xm C^*(G))$.

In the course of
our program to use coactions as an aid in the study of the Baum-Connes conjecture (see, \eg, \cite{klqfunctor, klqfunctor2, klqtensoraction}),
we found at one point that
we needed to find a coaction functor appropriate for the study of
a certain crossed-product functor introduced by Baum, Guentner, and Willett \cite{bgwexact},
the construction of which uses maximal tensor products.
In \cite{klqtensoraction} we handled the case where the group $G$ is discrete,
mainly through the use of Fell bundles.
In our efforts to generalize this to arbitrary locally compact $G$,
it occurred to us that
a homomorphism
$\psi\:B\rtimes_\alpha G\to (B\xm C)\rtimes_{\alpha\xm\gamma} G$
occurring in \cite{bgwexact}
is somehow similar to a coaction
$\delta\:A\to M(A\otimes C^*(G))$,
but with the minimal tensor product $\otimes$
replaced by the maximal one $\xm$.
This insight lead to our approach in
\cite{tensorD}
where we
need to briefly switch to Raeburn-style coactions.
We call these ``R-coactions'' ---
see \secref{r-co sec} for the definition and the comparison to
the usual ``standard'' coactions.
We chose the ``R'' to stand for ``Raeburn''.
We quote from \cite[page~36]{fulldual}:
``it may prove convenient to return to the maximal $C^*$-tensor product at some other time'' --- apparently that time is now.

Eventually we noticed that our paper \cite{tensorD} was becoming overly long, so we decided to separate out the basic development of R-coactions, leading to the current paper.
We develop a few new aspects of the theory of R-coactions,
particularly the notation of cocycles.

We should emphasize that we were motivated to write this paper
in order to have the basic facts concerning R-coactions available.
This has in some ways guided our development.
However, we feel that R-coactions will be useful elsewhere.

We begin in \secref{prelim} by recalling preliminary facts and definitions concerning coactions of a locally compact group $G$ on a $C^*$-algebra $A$, particularly the basics of crossed-product duality.
To prepare for our development of
cocycles for R-coactions, among other topics,
we review in some detail the corresponding facts concerning standard coactions.
We particularly need quite a bit of preparation involving Fischer's procedure for \emph{maximalization} of a coaction,
and we review this in detail in a separate \secref{fischer construction}.

In \secref{r-co sec} we develop the basic theory of R-coactions.
We were surprised by the number of familiar features of standard coactions that do not seem to carry over well --- or at all, in some cases --- to R-coactions.
For instance, although every R-coaction descends to a standard one, we do not know whether every standard coaction arises in this way,
or whether the R-coaction is unique when it exists.
Also, there appears to be no reasonable way to \emph{normalize} an R-coaction.
Finally, although it is possible (see \cite{rae:representation}) to form the crossed product of an R-coaction, it seems that it must be done on Hilbert space, not more generally in the multiplier algebra of a $C^*$-algebra.
In view of these issues, we needed to be quite careful in our use of R-coactions;
indeed, in \cite{tensorD} we only use R-coactions in a limited way, passing to them briefly but then passing back to standard coactions as soon as possible.
We close \secref{r-co sec} with appropriate notions of equivariant homomorphisms and quotients for R-coactions.

In \secref{rification} we show how to pass naturally from maximal coactions to R-coactions.
This requires going through the entire maximalization process,
additionally we must develop a theory of one-cocycles for R-coactions.
The culmination is \thmref{main}, which takes a maximal coaction $(A,\delta)$ and produces an R-coaction $\delta\rify$ on the same $C^*$-algebra $A$.
There is a subtlety: when the maximal coaction is in fact the dual coaction $\wilde\alpha$ on a full crossed product $B\rtimes_\alpha G$,
there is a much easier way to define an R-coaction, and in \thmref{same} we show that fortunately the two methods give the same result in this case.

We close in \secref{conclusion} with a brief explanation of why we do not try to include a theory of crossed-product duality for R-coactions, although Raeburn did prove a duality theorem in \cite{rae:representation}.

\section{Preliminaries}\label{prelim}

Throughout, $G$ denotes a 
locally compact group, and $A,B,\dots$ denote $C^*$-algebras.
We write $\KK$ for the algebra $\KK(L^2(G))$ of compact operators on $L^2(G)$.
We refer to \cite{enchilada,maximal,koqlandstad} for our conventions regarding actions, coactions, $C^*$-correspondences, and cocycles for coactions.
We
recall some notation for the convenience of the reader.

The left and right regular representations of $G$ are $\lambda$ and $\rho$, respectively,
the representation of $C_0(G)$ on $L^2(G)$ by multiplication operators is $M$,
and the unitary element $w_G\in M(C_0(G)\xt C^*(G))$ is the strictly continuous map $w_G\:G\to M(C^*(G))$ given by the canonical embedding of $G$.
(Note that we must be careful to understand from the context whether $M$ is being used for the representation of $C_0(G)$ or for a multiplier algebra.)
The actions of $G$ on $C_0(G)$ by left and right translations are denoted by $\lt$ and $\rt$, respectively.

Throughout this paper, we work with numerous categories of what could be called ``decorated'' $C^*$-algebras,
\ie, the objects are $C^*$-algebras together with some extra structure,
and the morphisms are homomorphisms preserving the extra structure.
To make it interesting, we have to deal with two sorts of morphisms:
\begin{itemize}
\item
``classical'' morphisms, which are
homomorphisms $\phi\:A\to B$ between $C^*$-algebras,
and

\item
nondegenerate morphisms, which are nondegenerate homomorphisms $\pi\:A\to M(B)$.
\end{itemize}
We actually need the classical morphisms for our main results,
and the nondegenerate homomorphisms appear in secondary (albeit crucial) roles.
We will formalize the various categories using classical morphisms,
and occasionally refer to nondegenerate categories more informally.
In an attempt to reduce confusion,
we will not actually use the term ``morphism'' when the $C^*$-algebras are not carrying any extra structure;
in other words, we will write either 
``$\phi\:A\to B$ is a homomorphism'',
or
``$\phi\:A\to M(B)$ is a nondegenerate homomorphism''
when $\phi$ is not required to have any other property.

To help prepare for our ``R-coactions''
we review here in detail some fundamental aspects of ``ordinary'' coactions.
First, a bit of parallel theory of actions:
A \emph{morphism} $\phi\:(A,\alpha)\to (B,\beta)$ of actions is 
an $\alpha-\beta$ equivariant homomorphism
$\phi\:A\to B$,
giving the \emph{category of actions}.
A \emph{covariant representation} of an action $(A,\alpha)$ in a multiplier algebra $M(B)$
is a pair $(\pi,U)$, where $\pi\:A\to M(B)$ is a nondegenerate homomorphism,
$U\:G\to M(B)$ is a strictly continuous unitary representation,
and $\pi\circ \alpha_s=\ad U_s\circ\pi$ for all $s\in G$.
A \emph{crossed product} of an action $(A,\alpha)$ of $G$ is
a 
universal covariant representation $(i_A,i_G)\:(A,G)\to M(A\rtimes_\alpha G)$,
meaning that for every covariant representation
$(\pi,U)\:(A,G)\to M(B)$
there is a unique nondegenerate homomorphism
$\pi\times U\:A\rtimes_\alpha G\to M(B)$,
called the \emph{integrated form} of $(\pi,U)$,
such that
\[
(\pi\times U)\circ i_A=\pi
\midtext{and}
(\pi\times U)\circ i_G=U.
\]
Any two crossed products of $(A,\alpha)$ are uniquely isomorphic,
and in practice a choice is made, and
the $C^*$-algebra $A\rtimes_\alpha G$ is referred to as the crossed product,
but the pair $(i_A,i_G)$ must be kept in mind.
We use superscripts $i_A^\alpha,i_G^\alpha$ if confusion seems possible.
The pair $(M,\rho)$ is a universal covariant representation of
$(C_0(G),\rt)$,
and 
\[
C_0(G)\rtimes_\rt G=\KK.
\]

If $\phi\:(A,\alpha)\to (B,\beta)$ is a morphism of actions,
then there is a unique homomorphism
$\phi\rtimes G\:A\rtimes_\alpha G\to B\rtimes_\beta G$
such that
\[
(\phi\rtimes G)\bigl(i_A(a)i_G^\alpha(c)\bigr)=i_B\circ\phi(a)i_G^\beta(c)\righttext{for all}a\in A,c\in C^*(G).
\]
In this way, the crossed-product construction is functorial from actions to $C^*$-algebras.
Note that we could not just write, \eg, $(\phi\rtimes G)\circ i_A=i_B\circ\phi$, because $\phi$, and consequently $\phi\rtimes G$, could be degenerate.

There is a parallel ``nondegenerate category of actions'',
and crossed products are functorial between nondegenerate categories as well.
However, as we mentioned already, the classical categories are of primary interest in this paper.

Turning to coactions,
first let $\delta_G\:C^*(G)\to M(C^*(G)\xt C^*(G))$ be the integrated form of the unitary representation given by $\delta_G(s)=s\xt s$ for $s\in G$.
Recall the following notational device:
for any $C^*$-algebras $A,D$ we define the \emph{tilde multiplier algebra} as
\[
\wilde M(A\xt D)=\{m\in M(A\xt D):m(1\xt D)\cup (1\xt D)m\subset A\xt D\}.
\]
Note that $\wilde M(A\xt D)$ is not symmetric in $A$ and $D$.
A \emph{coaction} of $G$ on $A$ is a nondegenerate faithful homomorphism
$\delta\:A\to \wilde M(A\xt C^*(G))$
such that
$(\delta\xt\id)\circ\delta=(\id\xt\deltag)\circ\delta$
and
$\clspn\{\delta(A)(1\xt C^*(G))\}=A\xt C^*(G)$.
A \emph{morphism} $\phi\:(A,\delta)\to (B,\epsilon)$ of coactions is a homomorphism $\phi\:A\to B$ that is
\emph{$\delta-\epsilon$ equivariant} in the sense that the diagram
\[
\xymatrix@C+70pt{
A \ar[r]^-\delta \ar[d]_\phi
&\wilde M(A\xt C^*(G)) \ar[d]^{\bar{\phi\xt\id}}
\\
B \ar[r]_-\epsilon
&\wilde M(B\xt C^*(G))
}
\]
commutes,
where the right-hand vertical arrow $\bar{\phi\xt\id}$ is the unique extension,
whose existence is guaranteed by \cite[Proposition~A.6]{enchilada},
of $\phi$ to
$\wilde M(A\xt C^*(G))$.
In this way we get the \emph{category of coactions}.
A \emph{covariant representation} of a coaction $(A,\delta)$ in $M(B)$
is a pair $(\pi,\mu)$
of nondegenerate homomorphisms
\[
\xymatrix{
A \ar[r]^-\pi
&M(B)
&C_0(G) \ar[l]_-\mu
}
\]
such that
$\ad \mu\xt\id(w_G)\circ (\pi\xt 1)=(\pi\xt\id)\circ\delta$.
The \emph{regular representation} of a coaction $(A,\delta)$ in $M(A\xt\KK)$ is given by
\[
\bigl((\id\xt\lambda)\circ\delta,1\xt M\bigr).
\]

A \emph{crossed product} of a coaction $(A,\delta)$ of $G$ is
a 
universal covariant representation $(j_A,j_G)\:(A,C_0(G))\to M(A\rtimes_\delta G)$,
meaning that for every covariant representation
$(\pi,\mu)\:(A,C_0(G))\to M(B)$
there is a unique nondegenerate homomorphism,
$\pi\times \mu\:A\rtimes_\delta G\to M(B)$,
called the \emph{integrated form} of $(\pi,\mu)$,
such that
\[
(\pi\times \mu)\circ j_A=\pi
\midtext{and}
(\pi\times \mu)\circ j_G=\mu.
\]
Any two crossed products of $(A,\delta)$ are uniquely isomorphic,
and in practice a choice is made, and
the $C^*$-algebra $A\rtimes_\delta G$ is referred to as the crossed product,
but the pair $(j_A,j_G)$ must be kept in mind.
The regular representation of any coaction $(A,\delta)$ is universal,
and 
so whenever it is convenient we are free to
identify
$A\rtimes_\delta G$ with a nondegenerate subalgebra of $M(A\xt\KK)$.
We use superscripts $j_A^\delta,j_G^\delta$ if confusion seems possible.

If $\phi\:(A,\delta)\to (B,\epsilon)$ is a morphism of coactions,
then there is a unique morphism
$\phi\rtimes G\:A\rtimes_\delta G\to B\rtimes_\epsilon G$
such that
\[
(\phi\rtimes G)\bigl(j_A(a)j_G^\delta(f)\bigr)=j_B\circ\phi(a)j_G^\epsilon(f)\righttext{for all}a\in A,f\in C_0(G).
\]
As with actions, the crossed-product construction is functorial from coactions to $C^*$-algebras.

We denote the \emph{trivial coaction} 
$a\mapsto a\xt 1$ on $A$ by $\triv$,
and more generally if $(A,\delta)$ is a coaction
and
$B\subset M(A)$,
then $\delta$ is \emph{trivial on $B$} if
$\delta(b)=b\xt 1$ for all $b\in B$.

For an action $(A,\alpha)$,
the \emph{dual coaction} $\what\alpha$ of $G$ on $A\rtimes_\alpha G$
is the integrated form of the covariant representation
$(i_A\xt 1,(i_G\xt\id)\circ\deltag)$, and
is the unique coaction that is trivial on $i_A(A)$
and such that
the nondegenerate homomorphism $i_G\:C^*(G)\to M(A\rtimes_\alpha G)$ is $\deltag-\what\alpha$ equivariant.

The crossed product $\phi\rtimes G\:A\rtimes_\alpha G\to B\rtimes_\epsilon G$
of a morphism $\phi\:(A,\alpha)\to (B,\beta)$ of actions
is $\what\alpha-\what\beta$ equivariant,
and hence gives a morphism $\phi\rtimes G\:(A\rtimes_\alpha G,\what\alpha)\to (B\rtimes_\beta G,\what\beta)$ of coactions,
and in this way the crossed product is functorial from actions to coactions.

For a coaction $(A,\delta)$,
the \emph{dual action} $\what\delta$ of $G$ on $A\rtimes_\delta G$
is defined so that for each $s\in G$ the automorphism $\what\delta_s$
is the integrated form of the covariant representation
$(j_A,j_G\circ\rt_s)$, and
is the unique action that is trivial on $j_A(A)$
and such that the
nondegenerate homomorphism $j_G\:C_0(G)\to M(A\rtimes_\delta G)$ is $\rt-\what\delta$ equivariant.
The crossed product $\phi\rtimes G\:A\rtimes_\delta G\to B\rtimes_\epsilon G$
of a morphism $\phi\:(A,\delta)\to (B,\epsilon)$ of actions
is $\what\delta-\what\epsilon$ equivariant,
and hence gives a morphism $\phi\rtimes G\:(A\rtimes_\delta G,\what\delta)\to (B\rtimes_\epsilon G,\what\epsilon)$ of actions,
and in this way the crossed product is functorial from coactions to actions.
For the trivial coaction $\triv$ on $\C$,
we have
\[
(\C\rtimes_{\triv} G,\what\triv,j_G)=(C_0(G),\rt,\id).
\]

Note that crossed products for both actions and coactions are also functorial between the nondegenerate categories:
\eg, if $\phi\:A\to M(B)$ is a nondegenerate $\alpha-\beta$ equivariant homomorphism
then the above construction gives a 
nondegenerate $\what\alpha-\what\beta$ equivariant homomorphism $\phi\rtimes G\:A\rtimes_\alpha G\to M(B\rtimes_\beta G)$.

The \emph{canonical surjection}
\[
\Phi=\Phi_A\:A\rtimes_\delta G\rtimes_{\what\delta} G\to A\xt \KK
\]
is the integrated form of the covariant representation
\[
\bigl((\id\xt\lambda)\circ\delta\times (1\xt M),1\xt\rho\bigr).
\]
If $\Phi$ is injective, the coaction $\delta$ is called \emph{maximal}.
The trivial coaction $(\C,\triv)$ is maximal, with
\[
\C\rtimes_{\triv} G\rtimes_{\what{\triv}} G=\KK.
\]

A \emph{maximalization} of a coaction $(A,\delta)$ consists of a maximal coaction $(B,\epsilon)$ and a surjective $\epsilon-\delta$ equivariant homomorphism $\psi\:B\to A$
such that the crossed product
$\psi\rtimes G\:B\rtimes_\epsilon G\to A\rtimes_\delta G$
is an isomorphism.
Sometimes the coaction $(B,\epsilon)$ itself is referred to as a maximalization.

\section{Fischer maximalization process}\label{fischer construction}

Our development of R-coactions will depend heavily upon what we call the \emph{Fischer maximalization process}, which we describe
at the end of this section,
after quite a lot of preparation.

We frequently need to compute with possibly degenerate homomorphisms.
Our main tool for handling this is
``$D$-multipliers'', 
which we review below after introducing ``\dalg s''.

\begin{defn}\label{d alg}
Fix a $C^*$-algebra $D$.
A \emph{\dalg} is a pair $(A,\pi)$, where $A$ is a $C^*$-algebra and $\pi\:D\to M(A)$ is a nondegenerate homomorphism.
If $(A,\pi)$ and $(B,\psi)$ are \dalg s then a homomorphism $\phi\:A\to B$ is \emph{$\pi-\psi$ compatible} if
\[
\phi(\pi(d)a)=\psi(d)\phi(a)\righttext{for all}d\in D,a\in A,
\]
in which case we say that $\phi\:(A,\pi)\to (B,\psi)$ is a \emph{morphism} of \dalg s.
\end{defn}

Note that we could not just write $\phi\circ\pi=\psi$,
since $\phi$ might be degenerate.

\begin{rem}
Warning: we will frequently need the case $D=C_0(G)$,
\ie, \cgalg s.
This must not be confused with the more familiar ``$C_0(G)$-algebras'', where $C_0(G)$ maps into the \emph{central} multipliers.
\end{rem}

\begin{lem}\label{d alg cat}
With the above structure, \dalg s and their morphisms form a category.
\end{lem}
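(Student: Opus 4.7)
The plan is to verify the three defining properties of a category: the existence of identity morphisms, closure under composition, and associativity of composition. Since morphisms of $D$-algebras are just ordinary homomorphisms of $C^*$-algebras (equipped with a compatibility condition), associativity will be inherited for free from the associativity of composition of ordinary $C^*$-algebra homomorphisms, and the identity map $\id_A\:A\to A$ is trivially $\pi-\pi$ compatible. So the real content is to check that the composition of two $D$-algebra morphisms is again a $D$-algebra morphism.

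First I would fix objects $(A,\pi),(B,\psi),(C,\chi)$ and morphisms $\phi\:(A,\pi)\to (B,\psi)$ and $\phi'\:(B,\psi)\to (C,\chi)$. I need to show that the homomorphism $\phi'\circ\phi\:A\to C$ is $\pi-\chi$ compatible, \ie, that
\[
(\phi'\circ\phi)(\pi(d)a)=\chi(d)(\phi'\circ\phi)(a)\righttext{for all}d\in D,a\in A.
\]
This is a direct calculation using each compatibility hypothesis in turn: applying $\pi-\psi$ compatibility of $\phi$ gives $\phi(\pi(d)a)=\psi(d)\phi(a)$, and then applying $\psi-\chi$ compatibility of $\phi'$ to the element $\psi(d)\phi(a)\in B$ gives $\phi'(\psi(d)\phi(a))=\chi(d)\phi'(\phi(a))$.

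The associativity of composition and the unital property of $\id_A$ are then immediate, since at the level of underlying homomorphisms of $C^*$-algebras these hold, and we have just verified that the subclass of $\pi-\psi$ compatible homomorphisms is closed under composition and contains the identities.

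I do not anticipate any genuine obstacle; the only point requiring attention is the warning immediately preceding the lemma, namely that we cannot phrase compatibility as $\phi\circ\pi=\psi$ because $\phi$ may be degenerate and hence does not extend canonically to multipliers. Writing the condition in the module-like form $\phi(\pi(d)a)=\psi(d)\phi(a)$ avoids this issue entirely, and composition behaves cleanly because we only ever evaluate the homomorphisms on elements of the $C^*$-algebras themselves, never on multipliers.
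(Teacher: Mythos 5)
Your proof is correct and matches the paper's argument: the paper likewise notes that the only nonobvious point is closure under composition and verifies it by the same two-step computation, first using $\pi-\psi$ compatibility of $\phi$ and then applying the second morphism's compatibility to the element $\psi(d)\phi(a)\in B$. Your remarks on identities, associativity, and why the module-form of the compatibility condition sidesteps degeneracy are consistent with the paper's treatment.
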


\begin{proof}
The only nonobvious thing is that we can compose morphisms:
let $\phi\:(A,\pi)\to (B,\psi)$ and $\sigma\:(B,\psi)\to (C,\eta)$ be morphisms of \dalg s.
We must show that the composition $\sigma\circ\phi\:A\to C$ is a \dalg\ morphism.
This is completely routine, but we give the computation to indicate the style of how these things go.
For $d\in D$ and $a\in A$,
\begin{align*}
(\sigma\circ\phi)(\pi(d)a)
&=\sigma\bigl(\phi(\pi(d)a)\big)
\\&=\sigma\bigl(\psi(d)\phi(a)\bigr)
\\&=\eta(d)\sigma(\phi(a))\righttext{(because $\psi(d)\in C$)}
\\&=\eta(d)\sigma\circ\phi(a).
\qedhere
\end{align*}
\end{proof}

\begin{rem}
Note that the category of \dalg s is \emph{not} a ``coslice'' (or ``comma'') category (see, for example, \cite[Section~II.6]{maclane}),
\ie, it is not the category of all objects ``under'' $D$ in a category of $C^*$-algebras,
because the morphisms $\pi$ and $\phi$ are not of the same type:
we require the $\pi$ to be nondegenerate, but allow it to map into the multiplier algebra $M(A)$,
whereas we require a morphism $\phi$ to map into $B$, and allow it to be degenerate.
\end{rem}

We now apply some of the results of \cite[Appendix~A]{dkq} to \dalg s.

\begin{defn}\label{d mult}
Let $(A,\pi)$ be a \dalg.
A \emph{$D$-multiplier} of $A$ is an element $m\in M(A)$ such that
\[
\pi(D)m\cup m\pi(D)\subset A,
\]
and $M_D(A)$ denotes the set of all $D$-multipliers of $A$.
The \emph{$D$-strict topology} on $M_D(A)$ is generated by the seminorms
\[
m\mapsto \|\pi(d)m\|\midtext{and}m\mapsto \|m\pi(d)\|\righttext{for}d\in D.
\]
\end{defn}

\begin{ex}
For any $C^*$-algebras $A,D$,
\[
\wilde M(A\xt D)=M_D(A\xt D),
\]
where the nondegenerate homomorphism is $d\mapsto 1\xt d$.
\end{ex}

The following lemma is 
\cite[Lemma~A.4]{dkq}, which in turn is based upon \cite[Proposition~A.5]{enchilada}.

\begin{lem}[{\cite{dkq}}]
With the notation from \defnref{d mult},
\begin{enumerate}
\item
The $D$-strict topology on $M_D(A)$ is stronger than the relative strict topology from $M(A)$.

\item
$M_D(A)$ is 
a $C^*$-subalgebra
of $M(A)$,
and multiplication and involution are separately $D$-strictly continuous.

\item
$M_D(A)$ is the $D$-strict completion of $A$.

\item
$M_D(A)$ is an $M(D)$-subbimodule of $M(A)$.
\end{enumerate}
\end{lem}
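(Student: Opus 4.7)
The plan is to treat the four parts as progressively building on one another, with nondegeneracy of $\pi$ (and hence the Cohen factorization $A=\pi(D)A=A\pi(D)$ together with an approximate identity $(u_\lambda)$ of $D$ satisfying $\pi(u_\lambda)\to 1$ strictly in $M(A)$) as the core tool throughout.

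For (i), given a net $m_\alpha\to m$ in the $D$-strict topology, I would pick an arbitrary $a\in A$, factor $a=\pi(d)b$ and $a=b'\pi(d')$ by Cohen factorization, and then write
\[
m_\alpha a-ma=(m_\alpha\pi(d')-m\pi(d'))b',\qquad am_\alpha-am=b(\pi(d)m_\alpha-\pi(d)m).
\]
The right-hand sides go to zero in norm by hypothesis, so $m_\alpha\to m$ in the relative strict topology inherited from $M(A)$.

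For (ii), closure of $M_D(A)$ under sum and involution is immediate, closure under products follows from $\pi(d)(mn)=(\pi(d)m)n\in AM(A)\subset A$ (and the symmetric statement), and norm-closedness is inherited from norm-closedness of $A$ in $M(A)$. For separate $D$-strict continuity of multiplication, with $n$ fixed and $m_\alpha\to m$ in $D$-strict, the computation $\pi(d)m_\alpha n=(\pi(d)m_\alpha)n\to (\pi(d)m)n$ holds in norm because multiplication on the right by $n\in M(A)$ is norm-continuous on $A$, while $m_\alpha n\pi(d)=m_\alpha(n\pi(d))\to m(n\pi(d))=mn\pi(d)$ in norm by part (i) applied to the element $n\pi(d)\in A$; multiplication on the left is analogous, and involution-continuity follows from the identity $\pi(d)m^*=(m\pi(d^*))^*$.

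For (iii), density of $A$ in $M_D(A)$ is witnessed by the net $\pi(u_\lambda)m$, which already sits inside $A$ since $m\in M_D(A)$: on one side $\pi(d)\pi(u_\lambda)m=\pi(du_\lambda)m\to\pi(d)m$ in norm because $du_\lambda\to d$ in $D$, and on the other $\pi(u_\lambda)m\pi(d)\to m\pi(d)$ in norm because $m\pi(d)\in A$ and $\pi(u_\lambda)\to 1$ strictly on $A$. For $D$-strict completeness, I would take a $D$-strict Cauchy net $(m_\alpha)$, obtain norm-limits $\pi(d)m_\alpha\to L_d$ and $m_\alpha\pi(d)\to R_d$ in $A$ for each $d\in D$, and assemble these into maps $A\to A$ by $\pi(d)a'\mapsto L_da'$ and $a'\pi(d)\mapsto a'R_d$; the factorization of any $a\in A$ is unique only up to the usual equivalence, so the main technical issue is checking well-definedness and that the resulting $(L,R)$ is a double centralizer, i.e., determines an element $m\in M(A)$ with $\pi(d)m=L_d$ and $m\pi(d)=R_d$. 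Since these norms are uniformly bounded on each $d$-fibre, uniform boundedness of $(m_\alpha)$ in $M(A)$ is then forced, and $m_\alpha\to m$ in $D$-strict by construction; the membership $m\in M_D(A)$ is automatic since $L_d,R_d\in A$.

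Part (iv) is a short verification using the canonical extension of $\pi$ to $M(D)\to M(A)$: for $n\in M(D)$ and $m\in M_D(A)$, one has $\pi(d)\pi(n)m=\pi(dn)m\in A$ since $dn\in D$, and $\pi(n)m\pi(d)=\pi(n)(m\pi(d))\in\pi(n)A\subset A$ since $\pi(n)\in M(A)$; the right-multiplication case is symmetric. I expect the only genuinely delicate step to be the completeness assertion in (iii) — specifically, promoting the pair of norm-Cauchy data $(L_d,R_d)_{d\in D}$ to a bona fide multiplier of $A$ and verifying the associativity condition $aL_d=R_da$ via a second Cohen factorization — since all other items reduce cleanly to Cohen factorization and the nondegeneracy-driven approximate identity argument.
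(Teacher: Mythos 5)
The paper offers no proof of this lemma at all---it is quoted from \cite[Lemma~A.4]{dkq}, which in turn rests on \cite[Proposition~A.5]{enchilada}---so you are necessarily supplying your own argument, and your overall strategy (Cohen factorization from nondegeneracy of $\pi$, an approximate identity $(u_\lambda)$ of $D$ with $\pi(u_\lambda)\to 1$ strictly, and a double-centralizer argument for completeness) is exactly the standard one used in those references. Parts (ii) and (iv), and the density half of (iii), are correct as written. But you have a recurring left/right bookkeeping error that you should fix. In (i), to control $m_\alpha a-ma$ you must factor $a=\pi(d')b'$, so that $m_\alpha a=(m_\alpha\pi(d'))b'$ and the $D$-strict seminorm $\|m_\alpha\pi(d')-m\pi(d')\|$ splits off; with your factorization $a=b'\pi(d')$ the product is $m_\alpha b'\pi(d')$ and no such estimate is available. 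The same swap recurs in the completeness step of (iii): with $L_d=\lim_\alpha\pi(d)m_\alpha$ and $R_d=\lim_\alpha m_\alpha\pi(d)$, the element $\pi(d)a'$ should go to $R_da'$ (since $m_\alpha\pi(d)a'\to R_da'$), not to $L_da'$. More seriously, the compatibility condition you single out as the delicate point, $aL_d=R_da$, is not the double-centralizer identity and is false in general, since $\lim_\alpha a\pi(d)m_\alpha$ and $\lim_\alpha m_\alpha\pi(d)a$ need not agree.

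The clean repair of (iii) also dissolves the well-definedness worry: by the (corrected) argument of (i), for every $a\in A$ the nets $(m_\alpha a)$ and $(am_\alpha)$ are already norm-Cauchy, so one may set $L(a)=\lim_\alpha m_\alpha a$ and $R(a)=\lim_\alpha am_\alpha$ directly, with no choice of factorization involved. The identity $aL(b)=\lim_\alpha am_\alpha b=R(a)b$ is then immediate, the double-centralizer theorem produces $m\in M(A)$ with $ma=L(a)$ and $am=R(a)$ (boundedness being automatic from that theorem, so no separate uniform-boundedness argument is needed for the net), and finally $\pi(d)m=L_d\in A$ and $m\pi(d)=R_d\in A$ show both that $m\in M_D(A)$ and that $m_\alpha\to m$ $D$-strictly.
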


The whole point of $D$-multipliers is to extend possibly degenerate homomorphisms.
\cite[Lemma~A.5]{dkq} is a quite general result along these lines;
we only need the following special case.

\begin{lem}[\cite{dkq}]\label{extend}
Every morphism $\phi\:(A,\pi)\to (B,\psi)$ of \dalg s extends uniquely to a
$D$-strictly continuous homomorphism $\bar\phi\:M_D(A)\to M_D(B)$.
Moreover, for $d\in D$ and $m\in M_D(A)$,
\[
\phi(\pi(d)m)=\psi(d)\bar\phi(m)
\midtext{and}
\phi(m\pi(d))=\bar\phi(m)\psi(d).
\]
\end{lem}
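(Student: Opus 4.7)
The plan is to extend $\phi$ by $D$-strict continuity, using that $A$ is $D$-strictly dense in $M_D(A)$ (part~(iii) of the previous lemma). First I would verify that $\phi$ sends $D$-strictly Cauchy nets in $A$ to $D$-strictly Cauchy nets in $M_D(B)$. The compatibility hypothesis gives $\psi(d)\phi(a)=\phi(\pi(d)a)$ for $d\in D$ and $a\in A$, and taking adjoints (since $\phi$, $\pi$, $\psi$ are all $*$-homomorphisms) yields $\phi(a)\psi(d)=\phi(a\pi(d))$ as well. Consequently, if $\pi(d)a_\lambda$ and $a_\lambda\pi(d)$ are norm-Cauchy for each $d\in D$, then so are $\psi(d)\phi(a_\lambda)$ and $\phi(a_\lambda)\psi(d)$, which is exactly $D$-strict Cauchyness of the net $(\phi(a_\lambda))$ inside $B\subset M_D(B)$.

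Given $m\in M_D(A)$, I would choose any net $(a_\lambda)\subset A$ with $a_\lambda\to m$ $D$-strictly (available by part~(iii)) and define $\bar\phi(m)\in M_D(B)$ to be the $D$-strict limit of $\phi(a_\lambda)$, which exists by $D$-strict completeness of $M_D(B)$ applied to $B$. Independence from the choice of net is routine: interleaving two approximating nets for $m$ gives a third that is still $D$-strictly convergent to $m$, forcing both candidate limits to agree. Passing to the limit in the identity $\psi(d)\phi(a_\lambda)=\phi(\pi(d)a_\lambda)$, and using that $\pi(d)a_\lambda\to\pi(d)m$ in norm while $\phi$ is norm-contractive, gives $\psi(d)\bar\phi(m)=\phi(\pi(d)m)$; the companion formula $\bar\phi(m)\psi(d)=\phi(m\pi(d))$ follows symmetrically.

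The remaining verifications are straightforward. Uniqueness of any $D$-strictly continuous extension is immediate from the density of $A$ in $M_D(A)$. The map $\bar\phi$ is a $*$-homomorphism because multiplication and involution on $M_D(\cdot)$ are separately $D$-strictly continuous (part~(ii) of the previous lemma) and $\phi$ already respects these operations on $A$, so both sides of each algebraic identity extend by continuity from $A$ to $M_D(A)$. I expect the main subtlety to be bookkeeping rather than any single hard step --- namely, keeping careful track of which convergences take place in norm and which in the $D$-strict topology, and ensuring that the limit $\bar\phi(m)$ in fact lies in $M_D(B)$ and not merely $M(B)$. The latter point is precisely what the completeness assertion in part~(iii) delivers.
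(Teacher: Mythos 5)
Your proposal is correct, but note that the paper itself offers no proof of this lemma: it is imported from \cite[Lemma~A.5]{dkq} (which in turn rests on \cite[Proposition~A.5]{enchilada}), and your extension-by-$D$-strict-continuity argument is exactly the one used in those sources, so there is nothing to compare at the level of strategy. Two of your ``routine'' steps deserve slightly more care than you give them. First, for the homomorphism property, separate $D$-strict continuity of multiplication does not let you pass to a single simultaneous limit in $\phi(a_\lambda b_\mu)=\phi(a_\lambda)\phi(b_\mu)$; either run the iterated two-step limit, or --- cleaner --- deduce multiplicativity algebraically from the displayed formulas: for $d,d'\in D$ one has
$\psi(d)\bar\phi(mn)\psi(d')=\phi(\pi(d)mn\pi(d'))=\phi(\pi(d)m)\phi(n\pi(d'))=\psi(d)\bar\phi(m)\bar\phi(n)\psi(d')$,
and nondegeneracy of $\psi$ finishes it. (The displayed formulas also give $D$-strict continuity of $\bar\phi$ immediately, since $\psi(d)\bar\phi(m_\nu)=\phi(\pi(d)m_\nu)$ converges in norm whenever $\pi(d)m_\nu$ does.) Second, if the completeness assertion in part~(iii) is only guaranteed for bounded Cauchy nets, replace ``any net $(a_\lambda)$'' by the canonical bounded approximating net $m\pi(e_i)$, where $(e_i)$ is an approximate identity for $D$; nondegeneracy of $\pi$ gives $m\pi(e_i)\to m$ $D$-strictly. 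Neither point is a gap in substance, only in bookkeeping.
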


\begin{cor}\label{md funct}
The assignments $(A,\pi)\mapsto M_D(A)$ and $\phi\mapsto \bar\phi$ give a functor from \dalg s to $C^*$-algebras.
\end{cor}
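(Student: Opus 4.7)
The plan is to verify the two functor axioms, preservation of identities and preservation of composition, using the uniqueness clause in \lemref{extend} as the main engine. The object assignment $(A,\pi)\mapsto M_D(A)$ already lands in $C^*$-algebras by part (ii) of the preceding lemma, and $\phi\mapsto\bar\phi$ already lands in homomorphisms $M_D(A)\to M_D(B)$ by \lemref{extend}, so only the two axioms remain.

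First I would check that the identity morphism $\id_A\:(A,\pi)\to(A,\pi)$ of \dalg s has extension $\overline{\id_A}=\id_{M_D(A)}$. Both maps are $D$-strictly continuous homomorphisms $M_D(A)\to M_D(A)$ that agree with $\id_A$ on $A$, so the uniqueness half of \lemref{extend} forces them to coincide.

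Next, for preservation of composition, let $\phi\:(A,\pi)\to(B,\psi)$ and $\sigma\:(B,\psi)\to(C,\eta)$ be composable morphisms. By \lemref{d alg cat} the composition $\sigma\circ\phi$ is again a morphism of \dalg s, so \lemref{extend} provides its unique extension $\overline{\sigma\circ\phi}\:M_D(A)\to M_D(C)$. On the other hand, $\bar\sigma\circ\bar\phi$ is a well-defined homomorphism $M_D(A)\to M_D(C)$ because $\bar\phi$ maps into $M_D(B)$. This composition is $D$-strictly continuous as a composition of $D$-strictly continuous homomorphisms, and for $a\in A\subset M_D(A)$ it satisfies $(\bar\sigma\circ\bar\phi)(a)=\bar\sigma(\phi(a))=\sigma(\phi(a))$, so it extends $\sigma\circ\phi$. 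Again by uniqueness in \lemref{extend}, we conclude $\overline{\sigma\circ\phi}=\bar\sigma\circ\bar\phi$.

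There is no genuinely hard step here; the only small point to keep straight is that one must invoke \lemref{d alg cat} to know $\sigma\circ\phi$ is a \dalg\ morphism in the first place, so that \lemref{extend} applies and the uniqueness assertion has teeth. Everything else is bookkeeping made free by the uniqueness of the extension.
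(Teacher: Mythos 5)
Your proof is correct and follows essentially the same route as the paper's: both establish preservation of composition by noting that $\bar\sigma\circ\bar\phi$ is $D$-strictly continuous and agrees with $\sigma\circ\phi$ on $A$, then invoke the uniqueness clause of \lemref{extend}. Your treatment is slightly more complete in that you also check identity morphisms explicitly and note the reliance on \lemref{d alg cat}, both of which the paper leaves implicit.
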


\begin{proof}
Given morphisms $\phi\:(A,\pi)\to (B,\psi)$ and $\sigma\:(B,\psi)\to (C,\rho)$ of \dalg s,
the composition
\[
\bar\sigma\circ\bar\phi\:M_D(A)\to M_D(C)
\]
is $D$-strictly continuous, and hence coincides with $\bar{\sigma\circ\phi}$ by the uniqueness clause of
\lemref{extend}.
\end{proof}

One natural source of \dalg s involves 
Exel's $C^*$-blends
(see \cite{exelblend}).
For example, in \lemref{ac x pr comp} we have a $C^*$-blend
\[
(A,C^*(G),i_A,i_G^\alpha,A\rtimes_\alpha G).
\]
Recall that a \emph{$C^*$-blend} is a 5-tuple
$\MM=(A,D,i,\pi,X)$,
where $A$, $D$, and $X$ are $C^*$-algebras
and
$i\:A\to M(X)$ and $\pi\:D\to M(X)$ are homomorphisms
such that
\[
X=\clspn\{i(A)\pi(D)\}.
\]
Taking adjoints, we also have $X=\clspn\{\pi(D)i(A)\}$,
and it follows quickly that $i$ and $\pi$ are nondegenerate.

Exel defines morphisms between $C^*$-blends.
However, for our purposes it will be convenient to slightly embellish
the morphisms of \cite{exelblend}:
if $\NN=(B,E,j.\psi,Y)$ is another $C^*$-blend,
and if
\[
\Phi\:X\to Y,\quad\phi\:A\to B,\midtext{and}\gamma\:D\to E
\]
are homomorphisms,
we say that the triple $(\Phi,\phi,\gamma)$ is a \emph{morphism}
from $\MM$ to $\NN$ if
\[
\Phi\bigl(i(a)\pi(d)\bigr)=j\circ\phi(a)\psi\circ\gamma(d)
\righttext{for all}a\in A,d\in D.
\]

\begin{rem}
Note that the definition of $C^*$-blend as above is symmetric in $A$ and $D$.
Also, in fact $i$ maps $A$ into the $D$-multiplier algebra $M_D(X)$
(and similarly $\pi\:D\to M_A(X)$).
For a morphism, most of the time we will have $D=E$ and $\gamma=\id_D$ (symmetrically, we could have $A=B$ and $\phi=\id_A$),
in which case we write a morphism simply as $(\Phi,\phi)$,
which satisfies
\[
\Phi\bigl(i(a)\pi(d)\big)=j\circ\phi(a)\psi(d)\righttext{for all}a\in A,d\in D.
\]
\end{rem}

The following lemma shows that $\Phi$ is a sort
of module map.

\begin{lem}\label{blend1}
If $(\Phi,\phi,\gamma)\:(A,D,i,\pi,X)\to (C,E,j,\psi,Y)$ is a morphism of $C^*$-blends as above,
then
\[
\Phi\bigl(i(a)x\bigr)=j\circ\phi(a)\Phi(x)
\righttext{for all}a\in A,x\in X.
\]
Similarly, $\Phi(\pi(d)x)=\psi\circ\gamma(d)\Phi(x)$ for $b\in D,x\in X$,
and also similarly for products in the opposite order.

In the case $D=E$ and $\gamma=\id_D$,
$\Phi\:(X,\pi)\to (Y,\psi)$ is a morphism of \dalg s.
\end{lem}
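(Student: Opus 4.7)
My proof plan is a density-plus-continuity argument. The spanning condition $X=\clspn\{i(A)\pi(D)\}$ means that for the first identity $\Phi(i(a)x)=j\circ\phi(a)\Phi(x)$ it suffices to verify the equality on products of the form $x=i(a')\pi(d)$, since both sides are norm-continuous and linear in $x$ (the left-hand side is $\Phi$ applied after left multiplication by the fixed multiplier $i(a)$, and the right-hand side is left multiplication by the fixed multiplier $j\circ\phi(a)$ applied after $\Phi$); the same reduction will apply to the three other identities.

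On these spanning elements the computation is routine. Fix $a,a'\in A$ and $d\in D$, use that $i$ and $\phi$ are homomorphisms, and apply the defining $C^*$-blend morphism identity $\Phi(i(b)\pi(e))=j\circ\phi(b)\,\psi\circ\gamma(e)$ twice:
\begin{align*}
\Phi\bigl(i(a)\cdot i(a')\pi(d)\bigr)
&=\Phi\bigl(i(aa')\pi(d)\bigr)=j\circ\phi(aa')\,\psi\circ\gamma(d) \\
&=j\circ\phi(a)\,j\circ\phi(a')\,\psi\circ\gamma(d)=j\circ\phi(a)\,\Phi\bigl(i(a')\pi(d)\bigr).
\end{align*}
The identity $\Phi(\pi(d)x)=\psi\circ\gamma(d)\Phi(x)$ is obtained by the symmetric computation with the roles of $(i,\phi)$ and $(\pi,\gamma)$ swapped, using that $\pi$, $\psi$, and $\gamma$ are homomorphisms. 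For the two opposite-order identities $\Phi(x\,i(a))=\Phi(x)\,j\circ\phi(a)$ and $\Phi(x\pi(d))=\Phi(x)\,\psi\circ\gamma(d)$, the most economical route is to take adjoints of the identities just proved, using that $\Phi$, $i$, $j$, $\pi$, $\psi$, $\phi$, and $\gamma$ are all $*$-preserving.

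For the final assertion, in the special case $D=E$ with $\gamma=\id_D$ the identity $\Phi(\pi(d)x)=\psi(d)\Phi(x)$ is precisely the $\pi$-$\psi$ compatibility required by \defnref{d alg}, so $\Phi\:(X,\pi)\to(Y,\psi)$ is a morphism of \dalg s. I do not expect any serious obstacle; the only subtlety to be careful about is that, since $\Phi$ may be degenerate, one cannot collapse a product such as $\Phi(i(a)\cdot i(a')\pi(d))$ into a composition $\Phi\circ i$ applied to something, so the whole argument must be phrased in terms of products, as above.
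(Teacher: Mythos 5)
Your proposal is correct and follows essentially the same route as the paper's proof: reduce by linearity, density, and norm continuity to spanning elements $x=i(a')\pi(d)$, apply the blend-morphism identity twice, and obtain the remaining identities by symmetry and adjoints, with the final assertion as a special case. Your explicit remarks on continuity of both sides in $x$ and on why one cannot collapse $\Phi\circ i$ (since $\Phi$ may be degenerate) are sound and merely make explicit what the paper leaves implicit.
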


\begin{proof}
By linearity and density we can take $x=i(a')\pi(b)$ with $a'\in A,d\in D$.
Then
\begin{align*}
\Phi\bigl(i(a)x\bigr)
&=\Phi(\bigl(i(a)i(a')\pi(d)\bigr)
\\&=\Phi\bigl(i(aa')\pi(d)\bigr)
\\&=\pi\circ\phi(aa')\tau(d)
\\&=\pi\circ\phi(a)\pi\circ\phi(a')\tau(d)
\\&=\pi\circ\phi(a)\Phi\bigl(i(a')\pi(d)\bigr)
\\&=\pi\circ\phi(a)\Phi(x).
\end{align*}
The other assertions follow by symmetry and taking adjoints,
except that the last part is just a special case.
\end{proof}

\begin{lem}\label{blend2}
Let $\phi\:(A,\pi)\to (B,\gamma)$ be a morphism of \dalg s.
Further let $(\Phi,\phi,\gamma)\:(A,D,i,\pi,X)\to (B,E,j,\psi,Y)$ be a morphism
of $C^*$-blends.
Then
\[
\Phi\:(X,i\circ\pi)\to (Y,\psi\circ\gamma)
\]
is also a morphism of \dalg s.
\end{lem}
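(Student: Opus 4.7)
The plan is a direct spanning-set computation, in the same spirit as \lemref{blend1}, with the \dalg-morphism property of $\phi$ used to handle multipliers from the $D$-algebra structures on $A$ and $B$. First I would confirm that $(X,i\circ\pi)$ and $(Y,\psi\circ\gamma)$ are genuine \dalg s. Here $i\circ\pi$ is shorthand for $\bar i\circ\pi\:D\to M(A)\to M(X)$, where $\bar i\:M(A)\to M(X)$ is the unital strict extension of the nondegenerate $i$. Nondegeneracy of the composite follows from nondegeneracy of each factor together with the blend identity $X=\clspn\{i(A)\pi(D)\}$, and the same routine check applies to $\psi\circ\gamma\:D\to E\to M(Y)$.

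The main content is to verify $\Phi\bigl((i\circ\pi)(d)\,x\bigr)=(\psi\circ\gamma)(d)\,\Phi(x)$ for all $d\in D$ and $x\in X$. By linearity and density it suffices to take $x=i(a)\pi(e)$ with $a\in A$, $e\in D$. I would expand the left-hand side as $\bar i(\pi(d))\,i(a)\,\pi(e)=i(\pi(d)a)\,\pi(e)$, using that $\bar i$ is a homomorphism extending $i$ and that $\pi(d)a\in A$; then the blend-morphism relation turns this into $j\circ\phi(\pi(d)a)\,\psi\circ\gamma(e)$, and the \dalg-morphism property of $\phi$ rewrites $\phi(\pi(d)a)$ as $\gamma(d)\phi(a)$. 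On the other side, the blend-morphism relation applied directly to $x$ gives $\Phi(x)=j\circ\phi(a)\,\psi\circ\gamma(e)$, so $(\psi\circ\gamma)(d)\,\Phi(x)=\psi\circ\gamma(d)\,j\circ\phi(a)\,\psi\circ\gamma(e)$. The two sides will then coincide provided $j(\gamma(d)b)=\psi\circ\gamma(d)\,j(b)$ for all $b\in B$.

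The one real obstacle — and the point I would flag carefully — is the notational overloading of $\gamma$ in the hypotheses: it serves both as the \dalg\ structure $\gamma\:D\to M(B)$ on $B$ and as the third component $\gamma\:D\to E$ of the blend morphism. The computation above is pushed through only by forcing these two uses to give the same map $D\to M(Y)$ after composing with $\bar j$ and $\psi$ respectively, i.e.\ $\bar j\circ\gamma=\psi\circ\gamma$ as nondegenerate homomorphisms $D\to M(Y)$. This compatibility is the implicit content of the shared notation. Once it is accepted, everything else is a routine extension by linearity and $D$-strict continuity from the spanning elementary tensors, and the argument is essentially a strengthening of \lemref{blend1} with the extra multiplier $\pi(d)\in M(A)$ absorbed into the $A$-factor via the \dalg-morphism property.
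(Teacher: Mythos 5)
Your proof is correct and is essentially the paper's argument: the paper factors $x=i(a')x'$ exactly by Cohen--Hewitt and then invokes \lemref{blend1}, whereas you work directly with the dense span $i(A)\pi(D)$ and the blend-morphism relation, but the computation is the same. You were also right to flag the overloaded $\gamma$: the identification $\bar j\circ\gamma=\psi\circ\gamma$ as nondegenerate maps $D\to M(Y)$ is exactly what the paper assumes tacitly --- its own computation terminates at $j\circ\gamma(d)\Phi(x)$ (written there, with a notational slip, as $j\circ\psi(d)\Phi(x)$), which matches the stated target decoration $\psi\circ\gamma$ only under that identification, as one sees in the intended application (\lemref{x pr d comp}) where the target decoration is $i_B\circ\psi$.
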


\begin{proof}
Let $d\in D$ and $x\in X$.
We must show that
\[
\Phi\big(i\circ\pi(d)x\bigr)=\psi\circ\psi(d)\Phi(x).
\]
By the Cohen-Hewitt factorization theorem we can take $x=i(a')x'$
with $a'\in A,x'\in X$.
Then
\begin{align*}
\Phi\big(i\circ\pi(d)x\bigr)
&=\Phi\big(i\circ\pi(d)i(a')x'\bigr)
\\&=\Phi\bigl(i(\pi(d)a')x'\bigr)
\\&=j\circ\phi(\pi(d)a')\Phi(x')
\righttext{(\lemref{blend1})}
\\&=j\bigl(\phi(\pi(d)a'\bigr)\Phi(x')
\\&=j\bigl(\psi(d)\phi(a')\bigr)\Phi(x')
\\&=j\circ\psi(d)j\circ\phi(a')\Phi(x')
\\&=j\circ\psi(d)\Phi\bigl(i(a')x'\bigr)
\\&=j\circ\psi(d)\Phi(x).
\end{align*}
\end{proof}

\begin{rem}
Note that if $\phi$ is nondegenerate then \lemref{blend1} says that
$\Phi\:(X,i)\to (Y,j\circ\phi)$ is a morphism of \dalg s.
\end{rem}

\lemref{blend1} implies the following two well-known lemmas:

\begin{lem}\label{ac x pr comp}
Let $\phi\:(A,\alpha)\to (B,\beta)$ be a morphism of actions.
Then
\[
(\phi\rtimes G)\bigl(i_A(a)x\bigr)=i_B\circ\phi(a)(\phi\rtimes G)(x)
\righttext{for all}a\in A,x\in A\rtimes_\alpha G.
\]
Also,
$\phi\rtimes G\:(A\rtimes_\alpha G,i_G^\alpha)\to (B\rtimes_\beta G,i_G^\beta)$
is a morphism of \csgalg s.
\end{lem}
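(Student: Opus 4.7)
The plan is to recognize this lemma as a direct application of \lemref{blend1} to the $C^*$-blend furnished by the crossed product of an action.

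First, I would verify that the tuple
\[
\MM_\alpha=(A,C^*(G),i_A,i_G^\alpha,A\rtimes_\alpha G)
\]
is a $C^*$-blend in the sense defined above. The only thing to check is the density
\[
A\rtimes_\alpha G=\clspn\{i_A(A)i_G^\alpha(C^*(G))\},
\]
which is standard (it follows from the covariance identity and the fact that $(i_A,i_G^\alpha)$ is a covariant representation generating the crossed product). Similarly, $\MM_\beta=(B,C^*(G),i_B,i_G^\beta,B\rtimes_\beta G)$ is a $C^*$-blend.

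Second, I would observe that the defining identity
\[
(\phi\rtimes G)\bigl(i_A(a)i_G^\alpha(c)\bigr)=i_B\circ\phi(a)i_G^\beta(c)
\quad(a\in A,\ c\in C^*(G))
\]
recalled in \secref{prelim} says exactly that the triple $(\phi\rtimes G,\phi,\id_{C^*(G)})$ is a morphism of $C^*$-blends from $\MM_\alpha$ to $\MM_\beta$, in the sense spelled out just before \lemref{blend1}. In particular, we are in the case $D=E$ and $\gamma=\id_D$ of that definition, so \lemref{blend1} applies.

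Finally, I would simply read off the two conclusions from \lemref{blend1}: its first assertion, with $\Phi=\phi\rtimes G$, $i=i_A$, $j=i_B$, $\pi=i_G^\alpha$, $\psi=i_G^\beta$, yields
\[
(\phi\rtimes G)\bigl(i_A(a)x\bigr)=i_B\circ\phi(a)(\phi\rtimes G)(x)
\quad(a\in A,\ x\in A\rtimes_\alpha G);
\]
and the special case at the end of \lemref{blend1}, applied in the opposite direction (using $\pi=i_G^\alpha$ in the role of the nondegenerate map), gives that $\phi\rtimes G\:(A\rtimes_\alpha G,i_G^\alpha)\to (B\rtimes_\beta G,i_G^\beta)$ is a morphism of \csgalg s. There is no real obstacle here; the only conceptual point is to match up notation so that \lemref{blend1} can be invoked cleanly, and the density that makes the crossed product a $C^*$-blend is the mild prerequisite.
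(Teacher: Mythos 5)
Your proposal is correct and follows exactly the route the paper intends: the paper gives no separate proof of \lemref{ac x pr comp}, stating only that it follows from \lemref{blend1} applied to the $C^*$-blend $(A,C^*(G),i_A,i_G^\alpha,A\rtimes_\alpha G)$, which is precisely your argument. The identification of $(\phi\rtimes G,\phi,\id_{C^*(G)})$ as a morphism of $C^*$-blends via the defining identity of $\phi\rtimes G$, and the reading off of both conclusions from \lemref{blend1}, is exactly what the authors have in mind.
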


\begin{lem}\label{co x pr comp}
Let $\phi\:(A,\delta)\to (B,\epsilon)$ be a morphism of coactions.
Then
\[
(\phi\rtimes G)\bigl(j_A(a)x\bigr)=j_B\circ\phi(a)(\phi\rtimes G)(x)
\righttext{for all}a\in A,x\in A\rtimes_\alpha G.
\]
Also,
$\phi\rtimes G\:(A\rtimes_\delta G,j_G^\delta)\to (B\rtimes_\epsilon G,j_G^\epsilon)$
is a morphism of \cgalg s.
\end{lem}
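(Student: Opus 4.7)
The plan is to recognize that both assertions are direct consequences of \lemref{blend1}, once one identifies the right $C^*$-blend structure on the coaction crossed product. This is exactly the same template used for \lemref{ac x pr comp} in the action case.

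First I would observe that for any coaction $(A,\delta)$, the quintuple
\[
\MM_A=(A,C_0(G),j_A,j_G^\delta,A\rtimes_\delta G)
\]
is a $C^*$-blend. Indeed, the defining relation for the crossed product gives a covariant pair $(j_A,j_G^\delta)$, and a standard application of the universal property together with nondegeneracy yields $A\rtimes_\delta G=\clspn\{j_A(A)j_G^\delta(C_0(G))\}$. Similarly $\MM_B=(B,C_0(G),j_B,j_G^\epsilon,B\rtimes_\epsilon G)$ is a $C^*$-blend.

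Next, the defining identity of $\phi\rtimes G$ recalled in the preliminaries,
\[
(\phi\rtimes G)\bigl(j_A(a)j_G^\delta(f)\bigr)=j_B\circ\phi(a)j_G^\epsilon(f)
\righttext{for all}a\in A,\ f\in C_0(G),
\]
is exactly the condition for the triple $(\phi\rtimes G,\phi,\id_{C_0(G)})$ to be a morphism of $C^*$-blends from $\MM_A$ to $\MM_B$ in the sense defined earlier in this section.

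Finally, I would apply \lemref{blend1}. The first conclusion of that lemma, written with $i=j_A$, $j=j_B$, $\pi=j_G^\delta$, $\psi=j_G^\epsilon$, $\Phi=\phi\rtimes G$, gives precisely
\[
(\phi\rtimes G)\bigl(j_A(a)x\bigr)=j_B\circ\phi(a)(\phi\rtimes G)(x)
\]
for all $a\in A$ and $x\in A\rtimes_\delta G$, which is the first assertion. The final sentence of \lemref{blend1}, applied in the case $D=E=C_0(G)$, $\gamma=\id$, yields that $\phi\rtimes G\:(A\rtimes_\delta G,j_G^\delta)\to (B\rtimes_\epsilon G,j_G^\epsilon)$ is a morphism of \cgalg s, which is the second assertion.

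No step presents a genuine obstacle; the only thing to be careful about is the $C^*$-blend density $A\rtimes_\delta G=\clspn\{j_A(A)j_G^\delta(C_0(G))\}$, but this is standard and already implicit in the universal property of the crossed product as recorded in \secref{prelim}.
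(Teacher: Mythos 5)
Your proposal is correct and follows exactly the route the paper intends: the paper states this lemma (together with \lemref{ac x pr comp}) as an immediate consequence of \lemref{blend1}, which is precisely the $C^*$-blend argument you spell out. The only detail the paper leaves implicit is the blend density $A\rtimes_\delta G=\clspn\{j_A(A)j_G^\delta(C_0(G))\}$, which you correctly identify as standard.
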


We
will need to know that 
crossed products are compatible with \dalg\ structure,
and 
we record this in the following lemma,
which is a routine application of \lemref{blend2}.

\begin{lem}\label{x pr d comp}
Let $\phi\:(A,\pi)\to (B,\psi)$ be a morphism of \dalg s.
If $\phi$ is also equivariant for actions $\alpha,\beta$,
then
\[
\phi\rtimes G\:(A\rtimes_\alpha G,i_A\circ\pi)\to (B\rtimes_\epsilon G,i_B\circ\psi)
\]
is a morphism of \csgalg s.

Similarly if $\phi$ is equivariant for coactions instead of actions.
\end{lem}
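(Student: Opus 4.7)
The plan is to recognize this as a direct application of \lemref{blend2} to the $C^*$-blend structure carried by crossed products, which was set up precisely for this kind of bookkeeping.

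First I would invoke \lemref{ac x pr comp}, which packages the action crossed product as the $C^*$-blend $(A,C^*(G),i_A,i_G^\alpha,A\rtimes_\alpha G)$, and similarly $(B,C^*(G),i_B,i_G^\beta,B\rtimes_\beta G)$. The very defining relation
\[
(\phi\rtimes G)\bigl(i_A(a)i_G^\alpha(c)\bigr)=i_B\circ\phi(a)i_G^\beta(c)
\]
from the preliminaries says exactly that $(\phi\rtimes G,\phi,\id_{C^*(G)})$ is a morphism between these two $C^*$-blends.

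With that in place, I would feed this blend morphism together with the hypothesis that $\phi\:(A,\pi)\to(B,\psi)$ is a $D$-algebra morphism into \lemref{blend2}. The conclusion of that lemma, applied to the composed maps $i_A\circ\pi\:D\to M(A\rtimes_\alpha G)$ and $i_B\circ\psi\:D\to M(B\rtimes_\beta G)$, delivers precisely the $D$-strict compatibility
\[
(\phi\rtimes G)\bigl(i_A\circ\pi(d)\,y\bigr)=i_B\circ\psi(d)(\phi\rtimes G)(y)\righttext{for all}d\in D,y\in A\rtimes_\alpha G,
\]
which is what is asserted. The coaction version is the same argument with \lemref{co x pr comp} substituted for \lemref{ac x pr comp} and $j_A,j_G^\delta$ replacing $i_A,i_G^\alpha$.

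There is no real obstacle here; the one thing to keep honest is that the $D$-algebra structures $\pi,\psi$ and the blend structures $i_G^\alpha,i_G^\beta$ come from different algebras ($D$ versus $C^*(G)$), so \lemref{blend2} is being used in exactly the setting for which it was formulated — the $D$-algebra data is \emph{pulled through} the blend along $i_A$ and $i_B$, and nondegeneracy of $\phi$ is never needed. Thus the proof can essentially be written as a one-sentence citation of \lemref{ac x pr comp} (resp.\ \lemref{co x pr comp}) followed by \lemref{blend2}.
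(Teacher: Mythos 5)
Your proposal is correct and is exactly the paper's intended argument: the paper dispatches this lemma with the single remark that it ``is a routine application of \lemref{blend2}'', and your write-up simply makes that application explicit by first using \lemref{ac x pr comp} (resp.\ \lemref{co x pr comp}) to exhibit $(\phi\rtimes G,\phi,\id_{C^*(G)})$ as a morphism of the relevant $C^*$-blends and then feeding it, together with the \dalg\ morphism $\phi\:(A,\pi)\to(B,\psi)$, into \lemref{blend2}. Your closing observation that nondegeneracy of $\phi$ is never needed is also consistent with the paper's setup, where \lemref{blend2} was formulated precisely to handle possibly degenerate $\phi$.
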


\subsection*{Equivariant actions}

An \emph{equivariant action} is a triple $(A,\alpha,\mu)$,
where $(A,\alpha)$ is an action of $G$ and
$(A,\mu)$ is a \cgalg\
such that $\mu$ is $\rt-\alpha$ equivariant.

Note that the categories of actions and of \cgalg s combine immediately to form a \emph{category of equivariant actions},
where a morphism $\phi\:(A,\alpha,\mu)\to (B,\beta,\nu)$ is just a morphism
$\phi\:(A,\alpha)\to (B,\beta)$ of actions
that is also $\mu-\nu$ compatible.

We are now ready for the first functor that we want to record with a name.

\begin{lem}\label{cpc functor}
The assignments
$(A,\delta)\mapsto (A\rtimes_\delta G,\what\delta,j_G)$
and
$\phi\mapsto \phi\rtimes G$
give a functor 
$\cpc$ 
from coactions to equivariant actions.
\end{lem}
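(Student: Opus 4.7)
The plan is to verify that the stated assignments are well-defined on objects and morphisms, and then check functoriality (preservation of identities and compositions). Most of the work has already been recorded earlier in the excerpt; the proof is essentially a matter of assembling the pieces.

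First, I would check that objects land in the correct category. Given a coaction $(A,\delta)$, the preliminaries show that the dual action $\what\delta$ exists on $A\rtimes_\delta G$, and $j_G\:C_0(G)\to M(A\rtimes_\delta G)$ is nondegenerate, making $(A\rtimes_\delta G, j_G)$ a \cgalg. The same passage states that $j_G$ is $\rt-\what\delta$ equivariant, which is precisely the compatibility condition required for $(A\rtimes_\delta G,\what\delta,j_G)$ to be an equivariant action.

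Second, I would check the morphism assignment. If $\phi\:(A,\delta)\to (B,\epsilon)$ is a morphism of coactions, then the preliminaries already record that $\phi\rtimes G\:A\rtimes_\delta G\to B\rtimes_\epsilon G$ is $\what\delta-\what\epsilon$ equivariant, so it is a morphism of actions. The compatibility with the $C_0(G)$-decorated structure, i.e., that $\phi\rtimes G$ is $j_G^\delta - j_G^\epsilon$ compatible, is exactly the content of \lemref{co x pr comp}. Thus $\phi\rtimes G$ is a morphism of equivariant actions.

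Finally, for functoriality itself, the crossed-product construction is already stated to be functorial from coactions to $C^*$-algebras, so $(\id_A)\rtimes G = \id_{A\rtimes_\delta G}$ and $(\psi\circ\phi)\rtimes G = (\psi\rtimes G)\circ (\phi\rtimes G)$ at the underlying $C^*$-algebra level. Since the equivariant-action structure on the target is determined by the coaction on the source, no new verification is required to transport functoriality to the category of equivariant actions. The only conceivable obstacle would be checking a composition identity involving degeneracy, but this is handled uniformly by the fact that the relevant morphisms are characterized by their action on the spanning set $\{j_A(a)j_G^\delta(f) : a\in A, f\in C_0(G)\}$, where both sides agree by direct substitution.
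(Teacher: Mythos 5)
Your proposal is correct and follows essentially the same route as the paper: the paper's proof simply invokes \lemref{co x pr comp} for the $j_G^\delta-j_G^\epsilon$ compatibility together with the already-established functoriality of the crossed product from coactions to actions, which are exactly the two ingredients you assemble. Your additional checks on the object assignment and the composition identity are implicit in the paper's one-line argument but harmless to spell out.
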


\begin{proof}
This follows immediately from \lemref{co x pr comp},
since we know already that the crossed product construction is functorial from coactions to actions.
\end{proof}

The name $\cpc$ is an acronym for ``crossed product by coactions'',
and
the functor $\cpc$ is the first step in the Fischer maximalization process.

Coactions will combine with \dalg s in several contexts, so it is efficient to introduce the following abstract concept.

\begin{defn}\label{d co}
A \emph{\dco}
is a triple $(A,\delta,\pi)$, where $(A,\delta)$ is a coaction, $(A,\pi)$ is a \dalg,
and
\[
\delta\circ\pi=\pi\xt 1.
\]
\end{defn}

Note that the categories of coactions and of \dalg s combine immediately to form a \emph{category of \dco s},
where a morphism $\phi\:(A,\delta,\pi)\to (B,\epsilon,\psi)$ is just a morphism
$\phi\:(A,\delta)\to (B,\epsilon)$ of coactions
that is also $\pi-\psi$ compatible.

\begin{rem}
One special case we will need is for $D=C_0(G)$.
Warning:
\cgco s must not be confused with
Nilsen's ``$C_0(X)$-coactions'' \cite[Section~3]{nil:full},
which further require $C_0(X)$ to map into the center $ZM(A)$.
\end{rem}

\subsection*{Coaction cocycles}

\begin{defn}\label{cocycle def}
A \emph{cocycle} for a coaction $(A,\delta)$ is a unitary $U\in M(A\xt C^*(G))$
such that
\begin{enumerate}
\item
$(\id\xt\deltag)(U)=(U\xt 1)(\delta\xt\id)(U)$ and

\item
$\clspn\{\ad U\circ\delta(A)(1\xt C^*(G))\}= A\xt C^*(G)$.
\end{enumerate}
\end{defn}
$U$ is also called a \emph{$\delta$-cocycle}.
If $U$ is a cocycle for $(A,\delta)$, then $\ad U\circ\delta$ is a another coaction,
called the \emph{perturbation} of $\delta$ by $U$,
and also is said to be \emph{exterior equivalent} to $\delta$.
There is a subtlety here:
in \defnref{cocycle def}~(ii) it is enough to require only that $\ad U\circ\delta$ maps $A$ into $\wilde M(A\xt C^*(G))$;
it is a theorem that coaction-nondegeneracy of $\ad U\circ\delta$ follows from that of $\delta$
(\eg, see \cite[Proposition~2.5 and the discussion surrounding it]{graded}).

It is still unknown whether cocycle-nondegeneracy is automatic;
perhaps similarly, it is unknown whether \defnref{cocycle def}~(ii) is redundant.

If $(A,\delta)$ and $(B,\epsilon)$ are coactions,
$U$ is a $\delta$-cocycle,
and $\phi\:A\to M(B)$ is a nondegenerate $\delta-\epsilon$ equivariant homomorphism,
then $(\phi\xt\id)(U)$ is an $\epsilon$-cocycle
and $\phi$ is also $\ad U\circ\delta-\ad(\phi\xt\id)(U)\circ\epsilon$ equivariant.
If $U$ is a $\delta$-cocycle and $W$ is an $\ad U\circ\delta$-cocycle,
then $WU$ is a $\delta$-cocycle, and of course $\ad W\circ \ad U\circ\delta=\ad WU\circ\delta$.
Also, clearly $U^*$ is an $\ad U\circ\delta$-cocycle and $\ad U^*\circ\ad U\circ\delta=\delta$.
It follows from the facts recalled in this paragraph that exterior equivalence is an equivalence relation on coactions.

We need a result going back to Nakagami and Takesaki \cite[Theorem~A.1]{naktak}
(see also \cite[Remark~3.2 (2)]{lprs} and
\cite[Lemma~1.2]{qrtwisted}):
for every nondegenerate homomorphism $\mu\:C_0(G)\to M(A)$
the unitary $W=(\mu\xt\id)(w_G)\in M(A\xt C^*(G))$ satisfies
\[
(\id\xt\deltag)(W)=W_{12}W_{13},
\]
where we use the ``leg'' notation:
\[
W_{12}=W\xt 1\midtext{and} W_{13}=(\id\xt\Sigma)(W\xt 1),
\]
and where 
$\Sigma$ is the ``flip automorphism'' of $C^*(G)\xt C^*(G)$ given on elementary tensors by
$\Sigma(x\xt y)=y\xt x$;
moreover, every such unitary $W$ arises in this way from a unique $\mu$.
We will say that $\mu$ and $W$ are \emph{associated to} each other.

Here is a quite fundamental special case:
$w_G$ is the unitary associated to the identity automorphism of $C_0(G)$,
and 
is a cocycle for the trivial coaction $\triv$ on $C_0(G)$.
Moreover, by commutativity the perturbed coaction $\ad w_G\circ\triv$ is again $\triv$.
We combine this with the established theory of coaction cocycles
in the following proposition.

\begin{prop}\label{fn coc}
Let $(A,\delta)$ be a coaction,
and let 
$W$
be the unitary associated to
a nondegenerate homomorphism
$\mu\:C_0(G)\to M(A)$.
Then $W$
is a
$\delta$-cocycle
if and only if
$(A,\delta,\mu)$ is a \cgco.
\end{prop}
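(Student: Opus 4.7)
The plan is to convert cocycle condition \defnref{cocycle def}(i) into an equation of Nakagami-Takesaki-associated unitaries, and then invoke the uniqueness clause. Since $W=(\mu\xt\id)(w_G)$ and $w_G$ satisfies $(\id\xt\deltag)(w_G)=(w_G)_{12}(w_G)_{13}$, pushing this identity through $\mu\xt\id\xt\id$ gives $(\id\xt\deltag)(W)=W_{12}W_{13}$ automatically. Comparing with the right-hand side of (i), namely $W_{12}(\delta\xt\id)(W)$, and cancelling the unitary $W_{12}$, condition (i) collapses to the single equation $W_{13}=(\delta\xt\id)(W)$ in $M(A\xt C^*(G)\xt C^*(G))$.

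Next, I would interpret each side through Nakagami-Takesaki applied to the $C^*$-algebra $A\xt C^*(G)$ in place of $A$. On the left, unpacking the leg notation gives $W_{13}=((\mu\xt 1)\xt\id)(w_G)$, which is the unitary associated to the nondegenerate homomorphism $\mu\xt 1\:C_0(G)\to M(A\xt C^*(G))$, $f\mapsto \mu(f)\xt 1$. On the right, using the strict extension $\bar\delta\:M(A)\to M(A\xt C^*(G))$ of $\delta$, we get $(\delta\xt\id)(W)=((\bar\delta\circ\mu)\xt\id)(w_G)$, associated to $\bar\delta\circ\mu$, which is nondegenerate since $\mu$ is nondegenerate and $\bar\delta$ is unital. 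By the uniqueness clause of Nakagami-Takesaki, the two unitaries agree iff $\bar\delta\circ\mu=\mu\xt 1$, which is precisely the condition that $(A,\delta,\mu)$ be a \cgco.

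It remains to address \defnref{cocycle def}(ii). By the remark immediately following that definition, once (i) holds the span identity is automatic provided $\ad W\circ\delta(A)\subset\wilde M(A\xt C^*(G))$. Under the \cgco\ hypothesis, this range check exploits that $W$ commutes with $\mu(C_0(G))\xt 1$ (inherited from $w_G$ commuting with $C_0(G)\xt 1$ in $M(C_0(G)\xt C^*(G))$) together with the identity $\delta(\mu(f)a)=(\mu(f)\xt 1)\delta(a)$ and the Cohen-Hewitt factorization $A=\mu(C_0(G))A$; these reduce the verification to a routine multiplier calculation. The main technical obstacle is the multiplier-algebra bookkeeping: carefully verifying that $\bar\delta\circ\mu$ is genuinely nondegenerate into $M(A\xt C^*(G))$ before invoking Nakagami-Takesaki uniqueness, and tracking how $\wilde M(A\xt C^*(G))$ behaves under conjugation by the unitary $W$.
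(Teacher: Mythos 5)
Your argument is correct, and your treatment of the converse direction (cocycle $\Rightarrow$ \cgco) is essentially identical to the paper's: cancel $W_{12}$, recognize the two sides of $W_{13}=(\delta\xt\id)(W)$ as the Nakagami--Takesaki unitaries associated to $\mu\xt 1$ and $\bar\delta\circ\mu$ respectively, and invoke uniqueness. Where you diverge is the forward direction. The paper gets it in one line from the previously quoted pushforward fact: the \cgco\ condition says precisely that $\mu\:(C_0(G),\triv)\to(A,\delta)$ is a nondegenerate equivariant homomorphism, $w_G$ is a $\triv$-cocycle, hence $W=(\mu\xt\id)(w_G)$ is a $\delta$-cocycle --- both conditions (i) and (ii) of \defnref{cocycle def} come for free. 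You instead reuse the Nakagami--Takesaki equivalence for (i) (which is fine; the paper itself notes its computation is reversible) and verify (ii) by hand via the range condition plus the automatic-nondegeneracy theorem from \cite{graded}. That route works, but your sketch of the range check leans on the wrong fact: commutation of $W$ with $\mu(C_0(G))\xt 1$ does not by itself control $W\delta(a)W^*(1\xt c)$, since $\wilde M(A\xt C^*(G))$ is not stable under conjugation by arbitrary unitary multipliers. What one actually needs is that $(f\xt 1)w_G^*(1\xt c)\in C_0(G)\xt C^*(G)$ --- i.e., the cocycle-nondegeneracy of $w_G$ for $\triv$ --- so that after a Cohen--Hewitt factorization $a=b\mu(f)$ one gets
\[
\delta(a)W^*(1\xt c)=\delta(b)(\mu\xt\id)\bigl((f\xt 1)w_G^*(1\xt c)\bigr)\in A\xt C^*(G),
\]
and then multiplying by $W\in M(A\xt C^*(G))$ on the left stays in $A\xt C^*(G)$. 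With that correction your argument closes; the paper's appeal to the pushforward lemma is simply the more economical way to package exactly this computation.
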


\begin{proof}
One direction follows immediately from results quoted in the preceding discussion:
if $(A,\delta,\mu)$ is a \cgco, then $\mu$ is a nondegenerate $\triv-\delta$ equivariant homomorphism,
so $W$ is a $\delta$-cocycle because $w_G$ is a $\triv$-cocycle.

Conversely, assume that $W$ is a $\delta$-cocycle.
For
$U=(\mu\xt\id)(w_G)$ the left-hand side of \defnref{cocycle def}~(i) 
becomes
\begin{align*}
(\id\xt\deltag)\circ(\mu\xt\id)(w_G)
&=(\mu\xt\id\xt\id)\circ(\id\xt\deltag)(w_G)
\\&=(\mu\xt\id\xt\id)\bigl((w_G)_{12}(w_G)_{13}\bigr)
\\&=(\mu\xt\id)(w_G)_{12}(\mu\xt\id)(w_G)_{13},
\end{align*}
while 
the right-hand side 
becomes
\begin{align*}
(\mu\xt\id)(w_G)_{12}(\delta\circ\mu\xt\id)(w_G),
\end{align*}
so 
\defnref{cocycle def}~(i)
implies
(and in fact
is equivalent,
upon cancelling the first factor,
to)
\[
(\mu\xt\id)(w_G)_{13}=(\delta\circ\mu\xt\id)(w_G).
\]
Since
\[
(\mu\xt\id)(w_G)_{13}=\bigl((\mu\xt 1)\xt\id\bigr)(w_G),
\]
we see that \defnref{cocycle def}~(i), together with the Nakagami-Takesaki characterization,
implies that the two homomorphisms
$\delta\circ\mu$ 
and 
$\mu\xt 1$
of $C_0(G)$ coincide,
and hence $(A,\delta,\mu)$ is a \cgco.
\end{proof}

\begin{rem}
The converse direction of \propref{fn coc},
namely that $\delta\circ\mu=\mu\xt 1$ is necessary for $W$ to be a $\delta$-cocycle,
seems to not be previously recorded in the literature; at least, we could not find it.
\end{rem}

\begin{defn}
In \propref{fn coc}, given a \cgco\ $(A,\delta,\mu)$, we will refer to $W$ as the \emph{associated cocycle}.
\end{defn}
Note that we have already been calling $W$ the unitary associated to $\mu$,
so now $W$ becomes associated to two things: a homomorphism $\mu$, and
a \cgco\ $(A,\delta,\mu)$;
these two usages are consistent.

As we mentioned already,
if $(A,\delta)$ and $(B,\epsilon)$ are coactions,
$U$ is a $\delta$-cocycle,
and $\phi\:A\to M(B)$ is a nondegenerate $\delta-\epsilon$ equivariant homomorphism,
then $(\phi\xt\id)(U)$ is an $\epsilon$-cocycle
and $\phi$ is also $\ad U\circ\delta-\ad(\phi\xt\id)(U)\circ\epsilon$ equivariant.
We need a variant of this fact for the particular type of cocycles of interest to us;
it is contained in \cite[Lemma~3.6]{klqfunctor},
but in the following lemma we state it formally for convenient reference in the current paper.

\begin{lem}[\cite{klqfunctor}]\label{mor coc}
If $\phi\:(A,\delta,\mu)\to (B,\epsilon,\nu)$ is a morphism of \cgco s,
with associated cocycles $W$ and $U$, respectively,
then $\phi$ is also $\ad W\circ\delta-\ad U\circ\epsilon$ equivariant.
\end{lem}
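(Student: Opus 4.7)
The plan is to reduce the claim to the single identity $\bar{\phi\xt\id}(W)=U$. Once that identity is established, the equivariance of $\phi$ between the perturbed coactions $\ad W\circ\delta$ and $\ad U\circ\epsilon$ follows from the multiplicativity of $\bar{\phi\xt\id}$ together with the given $\delta-\epsilon$ equivariance $\bar{\phi\xt\id}\circ\delta=\epsilon\circ\phi$: for every $a\in A$,
\[
\bar{\phi\xt\id}\bigl(W\delta(a)W^*\bigr)
=\bar{\phi\xt\id}(W)\,\bar{\phi\xt\id}(\delta(a))\,\bar{\phi\xt\id}(W)^*
=U\epsilon(\phi(a))U^*.
\]

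To verify $\bar{\phi\xt\id}(W)=U$, I would exploit the $C_0(G)$-compatibility $\phi(\mu(f)a)=\nu(f)\phi(a)$. Tensoring with $\id_{C^*(G)}$ shows that $\phi\xt\id\:A\xt C^*(G)\to B\xt C^*(G)$ becomes a morphism of $(C_0(G)\xt C^*(G))$-decorated algebras, with decorations $\mu\xt\id$ and $\nu\xt\id$ respectively; \lemref{extend} applied with $D=C_0(G)\xt C^*(G)$ then extends $\phi\xt\id$ $D$-strictly continuously to the relevant $D$-multiplier algebras, and the extension automatically satisfies
\[
\bar{\phi\xt\id}\bigl((\mu\xt\id)(w_G)\bigr)=(\nu\xt\id)(w_G),
\]
which is exactly $\bar{\phi\xt\id}(W)=U$. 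In down-to-earth terms, one can check this by testing on products $W(\mu(f)\xt c)$ with $f\in C_0(G)$ and $c\in C^*(G)$: the standard fact that $(f\xt 1)w_G\in C_0(G)\xt C^*(G)$ ensures these products live inside $A\xt C^*(G)$, where the possibly degenerate homomorphism $\phi\xt\id$ can be applied without any multiplier subtleties, and the compatibility relation then converts $\mu$-factors into $\nu$-factors.

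The main obstacle, and the reason the $D$-multiplier machinery of this section is really needed here, is precisely that $\phi$ is allowed to be degenerate, so its extension $\bar{\phi\xt\id}$ is not literally a tensor product of maps and one cannot blindly evaluate it on multipliers. If $\phi$ were nondegenerate the lemma would be essentially immediate from the usual behavior of cocycles under equivariant homomorphisms; the present statement just records that the same conclusion survives the passage to the degenerate classical category, provided one tracks the $C_0(G)$-decoration carefully.
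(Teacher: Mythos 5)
Your overall strategy --- convert $\mu$-factors into $\nu$-factors via the compatibility $\phi(\mu(f)a)=\nu(f)\phi(a)$ and then conjugate --- is the right instinct, but the reduction to the single identity $\bar{\phi\xt\id}(W)=U$ has a genuine gap: $W=(\mu\xt\id)(w_G)$ does not in general lie in the domain of any of the extensions you construct, so that identity is not even well-posed. The extension $\bar{\phi\xt\id}$ appearing in the definition of equivariance is defined on $\wilde M(A\xt C^*(G))=M_{C^*(G)}(A\xt C^*(G))$, and $W$ is generally not a $C^*(G)$-multiplier: already for the basic \cgco\ $(C_0(G),\triv,\id)$ one has $W=w_G$, and $w_G(1\xt c)$ is the function $s\mapsto sc$, which has constant norm $\|c\|$ and hence does not lie in $C_0(G)\xt C^*(G)$. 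Your proposed repair --- decorating $A\xt C^*(G)$ by $C_0(G)\xt C^*(G)$ via $\mu\xt\id$ --- does not help, because $\mu$ maps into $M(A)$ rather than into $A$: the product $W(\mu(f)\xt c)=(\mu\xt\id)\bigl(w_G(f\xt c)\bigr)$ lies in $\clspn\{\mu(C_0(G))\xt C^*(G)\}\subset M(A\xt C^*(G))$, not in $A\xt C^*(G)$, so your claim that these products ``live inside $A\xt C^*(G)$'' is false, and $W$ fails to be a $D$-multiplier for that decoration as well. Consequently the factorization $\bar{\phi\xt\id}(W\delta(a)W^*)=\bar{\phi\xt\id}(W)\,\bar{\phi\xt\id}(\delta(a))\,\bar{\phi\xt\id}(W)^*$ takes place outside the domain of the homomorphism.

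The statement that is both true and provable is the module-map identity
\[
\bar{\phi\xt\id}\bigl((\mu\xt\id)(x)\,y\bigr)=(\nu\xt\id)(x)\,\bar{\phi\xt\id}(y)
\righttext{for}x\in M(C_0(G)\xt C^*(G)),\ y\in\wilde M(A\xt C^*(G)),
\]
in which $\bar{\phi\xt\id}$ is never applied to $(\mu\xt\id)(x)$ alone --- the point being that the \emph{product} $(\mu\xt\id)(x)y$ does lie in $\wilde M(A\xt C^*(G))$ even though its first factor does not. Taking $x=w_G$ and $y=\delta(a)$, then adjoints and products, yields the lemma. Establishing that identity requires exactly the bootstrap you were trying to shortcut (elementary tensors, then strict limits, then an approximate identity in the second leg), or else the slice-map/$B(G)$-module arguments that are available for minimal tensor products. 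This is not a cosmetic issue: the present paper quotes the lemma from \cite{klqfunctor} precisely because the argument is nontrivial, and its detailed proof of the maximal-tensor-product analogue, \lemref{mor coc r}, is organized around the displayed identity rather than around a value of $\bar{\phi\xt\id}$ at $W$.
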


We now have a second functor that we want to officially record with a name.

\begin{lem}\label{cpa functor}
Let $(A,\alpha,\mu)$ be an equivariant action,
and let $W$ be the unitary associated to the homomorphism $i_A\circ\mu$.
Then
$W$
is an $\what\alpha$-cocycle.
Put
\[
\wilde\alpha=\ad W\circ\what\alpha.
\]
Then
$(A\rtimes_\alpha G,\wilde\alpha,\mu\rtimes G)$ is a \kco.

Moreover, if $\phi\:(A,\alpha,\mu)\to (B,\beta,\nu)$ is a morphism of equivariant actions,
then
the homomorphism
$\phi\rtimes G$
gives
a morphism
\[
(A\rtimes_\alpha G,\wilde\alpha,\mu\rtimes G)\to
(B\rtimes_\beta G,\wilde\beta,\nu\rtimes G)
\]
of \kco s.

Finally, the assignments
$(A,\alpha,\mu)\mapsto (A\rtimes_\alpha G,\wilde\alpha,\mu\rtimes G)$
and
$\phi\mapsto \phi\rtimes G$
give
a functor $\cpa$
from equivariant actions to \kco s.
\end{lem}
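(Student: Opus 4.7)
The plan is to verify the four claims of the lemma in sequence. First I would recognize $(A\rtimes_\alpha G,\what\alpha,i_A\circ\mu)$ as a \cgco: the composition $i_A\circ\mu$ is nondegenerate (each factor is), and since $\what\alpha$ is trivial on $i_A(A)$ it is trivial on $i_A(M(A))$ by strict continuity, giving $\what\alpha\circ(i_A\circ\mu)=(i_A\circ\mu)\otimes 1$. \propref{fn coc} then immediately yields that $W=(i_A\circ\mu\otimes\id)(w_G)$ is an $\what\alpha$-cocycle, so $\wilde\alpha=\ad W\circ\what\alpha$ is a coaction.

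The heart of the argument is showing $(A\rtimes_\alpha G,\wilde\alpha,\mu\rtimes G)$ is a \kco. The map $\mu\rtimes G\colon\KK=C_0(G)\rtimes_\rt G\to M(A\rtimes_\alpha G)$ is nondegenerate as the integrated form of the covariant pair $(i_A\circ\mu,i_G^\alpha)$, and its image is spanned by products $i_A(\mu(f))\,i_G^\alpha(c)$; it suffices to verify $\wilde\alpha$ is trivial on each of the generating families $i_A(\mu(f))$ and $i_G^\alpha(c)$. The first is easy: $\what\alpha$ is already trivial there, and $W$ commutes with $i_A(\mu(f))\otimes 1$ because $w_G$ commutes with $f\otimes 1\in M(C_0(G)\otimes C^*(G))$ by commutativity of $C_0(G)$. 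The second is the main obstacle. For $s\in G$, I would exploit the covariance of $(i_A\circ\mu,i_G^\alpha)$ for $(C_0(G),\rt)$ to push conjugation by $i_G^\alpha(s)\otimes 1$ through $(i_A\circ\mu\otimes\id)$ so that it becomes the action of $\rt_s\otimes\id$ on $C_0(G)\otimes C^*(G)$; extending strictly gives
\[
(i_G^\alpha(s)\otimes 1)\,W\,(i_G^\alpha(s)\inv\otimes 1)=(i_A\circ\mu\otimes\id)\bigl((\rt_s\otimes\id)(w_G)\bigr)=W(1\otimes s),
\]
where the second equality uses the pointwise identity $\rt_s(w_G)(g)=gs$ in $M(C^*(G))$. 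Rearranging produces $W(i_G^\alpha(s)\otimes s)=(i_G^\alpha(s)\otimes 1)\,W$, hence
\[
\wilde\alpha(i_G^\alpha(s))=W\,(i_G^\alpha\otimes\id)(\deltag(s))\,W^*=i_G^\alpha(s)\otimes 1,
\]
which extends to all $c\in C^*(G)$ by linearity and strict continuity.

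For the morphism clause, \lemref{x pr d comp} (with $D=C_0(G)$) makes $\phi\rtimes G$ an $(i_A\circ\mu)$-$(i_B\circ\nu)$ compatible homomorphism, and together with the routine $\what\alpha$-$\what\beta$ equivariance from functoriality of the crossed product for actions this makes $\phi\rtimes G$ a morphism of \cgco s; \lemref{mor coc} then upgrades this to $\wilde\alpha$-$\wilde\beta$ equivariance for free. The $\KK$-compatibility of $\phi\rtimes G$ reduces, on generators of the form $M(f)\rho(c)\in\KK$, to combining this $C_0(G)$-compatibility with the $C^*(G)$-compatibility supplied by \lemref{ac x pr comp}. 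Functoriality of $\cpa$ is then immediate from the functoriality of the crossed product on actions together with uniqueness in the perturbation and integrated-form constructions.
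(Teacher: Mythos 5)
Your proof is correct, and the cocycle step, the morphism clause, and the functoriality are handled essentially as in the paper (via \propref{fn coc}, \lemref{x pr d comp}, \lemref{ac x pr comp}, and \lemref{mor coc}). Where you diverge is in the central claim that $\wilde\alpha$ is trivial on $(\mu\rtimes G)(\KK)$: you verify this by a direct computation on the generating families $i_A(\mu(f))$ and $i_G^\alpha(s)$ for the image of $\KK$, using commutativity of $C_0(G)$ for the first and the conjugation identity $(i_G^\alpha(s)\xt 1)W(i_G^\alpha(s)\inv\xt 1)=W(1\xt s)$ --- itself a consequence of covariance together with $(\rt_s\xt\id)(w_G)=w_G(1\xt s)$ --- for the second. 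The paper instead factors through the universal example: it observes that $(\KK,\what\rt,M)$, with $\KK=C_0(G)\rtimes_\rt G$, is a \cgco\ whose associated cocycle $U$ satisfies $\ad U\circ\what\rt=\triv$ (by the very same identity), and then transports this along the nondegenerate $\what\rt-\what\alpha$ equivariant map $\mu\rtimes G$, using the general fact that such a map carries $U$ to the $\what\alpha$-cocycle $W$ and intertwines the perturbed coactions. The two arguments rest on the same key identity; yours is more self-contained and concrete, while the paper's reuses the cocycle-transport machinery and thereby avoids redoing the generator computation inside $M(A\rtimes_\alpha G\xt C^*(G))$. Both are complete.
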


The name $\cpa$ is an acronym for ``crossed product by action'',
and
the functor $\cpa$ is the second step in the Fischer maximalization process.

\begin{proof}
Since $(A\rtimes_\alpha G,\what\alpha,i_A\circ\mu)$ is a \cgco\ by definition of $\what\alpha$,
it follows from \propref{fn coc} that $W$ is a $\what\alpha$-cocycle.

Recall that we identify $C_0(G)\rtimes_\rt G=\KK$,
with $(i_{C_0(G)},i_G)=(M,\rho)$.
Then the definition of dual coactions implies that $(\KK,\what\rt,M)$ is a \cgco,
so 
the associated unitary $U$
is a $\what\rt$-cocycle.
Moreover,
the identity
\[
w_G(1\xt s)=(\rt_s\xt\id)(w_G)
\]
implies that in fact
$\ad U\circ\what\rt$ 
is the trivial coaction on $\KK$.
The homomorphism $\mu\:C_0(G)\to M(A)$ is $\rt-\alpha$ equivariant,
so the crossed-product homomorphism
\[
\mu\rtimes G\:C_0(G)\rtimes_\rt G=\KK\to M(A\rtimes_\alpha G)
\]
is $\what\rt-\what\alpha$ equivariant.
Since the perturbed coaction 
$\ad U\circ\what\rt$ 
is trivial,
it follows that
$\wilde\alpha$ is trivial on the image $(\mu\rtimes G)(\KK)$.
Thus $(A\rtimes_\alpha G,\wilde\alpha,\mu\rtimes G)$ is a \kco.

Given a morphism $\phi$, we know from 
Lemmas~\ref{ac x pr comp} and \ref{x pr d comp}
that $\phi\rtimes G$ is both $(i_A\circ\mu)-(i_B\circ\nu)$ and $i_G^\alpha-i_G^\beta$ compatible, and it follows that it is also $(\mu\rtimes G)-(\nu\rtimes G)$ compatible,
since
$\mu\rtimes G$ is the integrated form of the covariant representation $(i_A\circ\mu,i_G^\alpha)$ of the action $(C_0(G),\rt)$,
and similarly for $\nu\rtimes G$.

Since
$(\phi\rtimes G)\:(A\rtimes_\alpha G,\what\alpha,i_A\circ\mu)\to(B\rtimes_\beta G,\what\beta,i_B\circ\nu)$
is a morphism of
\cgco s,
the $(i_A\circ\mu)-(i_B\circ\nu)$ compatibility, combined with \lemref{mor coc}, also gives $\wilde\alpha-\wilde\beta$ equivariance,
and therefore $\phi\rtimes G$ is a morphism
of \kco s.

Finally, the functoriality is clear, because we know that the crossed-product construction is functorial from actions to coactions.
\end{proof}

\subsection*{Relative commutants}

\begin{rem}
In the following we refer to \cite{fischer} and \cite{koqstable} for relative commutants of the compact operators $\KK=\KK(L^2(G))$. While the first reference seems not to require $G$ to be second countable --- \ie, for the Hilbert space $L^2(G)$ to be separable --- the second reference does assume this. However, the methods of \cite{koqstable} can be extended to the general case using routine methods.
\end{rem}

The
\emph{relative commutant} of a \kalg\ $(A,\iota)$ is the $C^*$-algebra
\[
C(A,\iota)=\{m\in M(A):m\iota(k)=\iota(k)m\in A
\text{ for all }k\in\KK\}.
\]
\begin{prop}[{\cite[Remark~3.1]{fischer}, \cite[Proposition~3.4]{koqstable}}]
If $(A,\iota)$ is a \kalg\ then there is a unique isomorphism
$\theta_A\:C(A,\iota)\xt\KK\variso A$
such that
\[
\theta(m\xt k)=m\iota(k)\righttext{for}m\in C(A,\iota),k\in\KK.
\]
\end{prop}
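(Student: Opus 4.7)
The plan is to first verify that the prescription $m\xt k \mapsto m\iota(k)$ defines a $*$-homomorphism on the algebraic tensor product of $C(A,\iota)$ with $\KK$. The commutation relation $m\iota(k) = \iota(k)m$ built into the definition of $C(A,\iota)$ gives multiplicativity, $(m_1\iota(k_1))(m_2\iota(k_2)) = m_1 m_2 \iota(k_1k_2)$, and involution-preservation is immediate. Since the image lies in $A$ by the defining property of $C(A,\iota)$, and since $\KK$ is nuclear (so there is no ambiguity in the $C^*$-tensor product norm), this extends to a $*$-homomorphism $\theta_A\:C(A,\iota)\xt\KK \to A$. Uniqueness is automatic, as elementary tensors span a dense subspace.

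For injectivity I would fix a system of matrix units $\{e_{ij}\}$ in $\KK$ and set $q = 1\xt e_{11}$. The identity $(a\xt e_{i1})q(b\xt e_{1j}) = ab\xt e_{ij}$ shows that $q$ is a full projection in $M(C(A,\iota)\xt\KK)$, so it suffices to check that $\theta_A$ is injective on the corner $q(C(A,\iota)\xt\KK)q \cong C(A,\iota)$, on which $\theta_A$ reduces to $m \mapsto m\iota(e_{11})$. If $m\iota(e_{11}) = 0$, then for every $j$,
\[
m\iota(e_{j1}) = m\iota(e_{j1})\iota(e_{11}) = \iota(e_{j1})m\iota(e_{11}) = 0,
\]
and similarly $m\iota(e_{1i}) = 0$ for every $i$; hence $m\iota(e_{ij}) = m\iota(e_{i1})\iota(e_{1j}) = 0$ for all $i,j$, so $m\iota(\KK) = 0$, and nondegeneracy of $\iota$ forces $m = 0$.

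For surjectivity, let $p_n = \sum_{i\le n} e_{ii}$. Nondegeneracy of $\iota$ yields $\iota(p_n)a\iota(p_n) \to a$ for every $a \in A$, so it suffices to exhibit each summand $\iota(e_{ii})a\iota(e_{jj})$ in the image of $\theta_A$. Given $x \in \iota(e_{11})A\iota(e_{11})$ I would construct an element $m_x \in C(A,\iota)$ satisfying $m_x\iota(e_{ij}) = \iota(e_{i1})x\iota(e_{1j})$ for all $i,j$; then $\theta_A(m_x\xt e_{ij}) = \iota(e_{i1})x\iota(e_{1j})$, and choosing $x = \iota(e_{1i})a\iota(e_{j1})$ produces the required preimage of $\iota(e_{ii})a\iota(e_{jj})$. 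The main obstacle is precisely this construction of $m_x$: one must show that the prescribed values on matrix units extend, via the $\KK$-strict topology and an approximate unit in $\KK$, to a bounded element of $M(A)$ commuting with all of $\iota(\KK)$. This is a careful but standard matrix-unit bookkeeping exercise: commutation with $\iota(e_{kl})$ reduces to the identity $e_{kl}e_{ij} = \delta_{li}e_{kj}$, and boundedness follows from the fact that the $\iota(e_{ij})$ are partial isometries.
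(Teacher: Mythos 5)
Your argument is correct. The paper does not actually prove this proposition --- it quotes it from Fischer and from Kaliszewski--Omland--Quigg --- but your proof follows the same standard destabilization route as those sources: the map $m\xt k\mapsto m\iota(k)$ is a well-defined $*$-homomorphism by the built-in commutation relation and nuclearity of $\KK$, injectivity is checked on the full corner determined by $1\xt e_{11}$, and surjectivity reduces to producing, for each $x\in\iota(e_{11})A\iota(e_{11})$, the strictly convergent sum $m_x=\sum_i\iota(e_{i1})x\iota(e_{1i})\in C(A,\iota)$. You correctly isolate that last construction as the only step requiring real work, and your sketch of it (mutual orthogonality of the partial isometries $\iota(e_{i1})$ giving the uniform bound $\|x\|$ on the partial sums, plus strict convergence tested against $\iota(\KK)A$) is sound.
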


As Fischer observes, $C(A,\iota)$ can be characterized as the unique closed subset $Z$ of $M(A)$ 
that commutes elementwise with $\iota(\KK)$ 
and 
satisfies
$\clspn \{Z\iota(\KK)\}=A$.
Similarly, $M(C(A,\iota))$ can be characterized as the set of all elements of $M(A)$ that commute 
elementwise
with
$\iota(\KK)$.

\begin{lem}\label{C functor}
Let $\phi\:(A,\iota)\to (B,\jmath)$ be a morphism of \kalg s.
Then there is a unique homomorphism
\[
C(\phi)\:C(A,\iota)\to C(B,\jmath)
\]
such that
\begin{equation}\label{phi km}
\phi(\iota(k)m)=\jmath(k)C(\phi)(m)
\righttext{for all}k\in\KK,m\in C(A,\iota).
\end{equation}
We also have
\begin{equation}\label{phi mk}
\phi(m\iota(k))=C(\phi)(m)\jmath(k)
\righttext{for all}m\in C(A,\iota),k\in\KK.
\end{equation}
Finally, in this way we get a functor
\[
(A,\iota)\mapsto C(A,\iota)
\]
from \kalg s to $C^*$-algebras.
\end{lem}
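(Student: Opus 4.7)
The plan is to obtain $C(\phi)$ as the restriction to $C(A,\iota)$ of the extension $\bar\phi\:M_\KK(A)\to M_\KK(B)$ provided by \lemref{extend}, applied with $D=\KK$. First I would observe that $C(A,\iota)\subset M_\KK(A)$: by the defining property of the relative commutant, every $m\in C(A,\iota)$ satisfies $\iota(k)m,\,m\iota(k)\in A$ for all $k\in\KK$, which is precisely the definition of being a $\KK$-multiplier. Hence $\bar\phi$ is defined on $C(A,\iota)$, and \lemref{extend} immediately gives
\[
\jmath(k)\bar\phi(m)=\phi(\iota(k)m)\in B
\midtext{and}
\bar\phi(m)\jmath(k)=\phi(m\iota(k))\in B.
\]

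The key step is checking that $\bar\phi$ actually lands in $C(B,\jmath)$. This reduces to showing that $\bar\phi(m)$ commutes with $\jmath(k)$ for every $k\in\KK$, and from the two identities displayed above this is equivalent to $\phi(\iota(k)m)=\phi(m\iota(k))$, which is immediate since $m$ and $\iota(k)$ already commute inside $M(A)$. Defining $C(\phi):=\bar\phi|_{C(A,\iota)}$ then produces a homomorphism to $C(B,\jmath)$ satisfying both \eqref{phi km} and \eqref{phi mk}, since these identities are exactly the extension formulas from \lemref{extend}.

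For uniqueness, suppose $\sigma\:C(A,\iota)\to C(B,\jmath)$ is any homomorphism also satisfying \eqref{phi km}. Then $\jmath(k)\bigl(\sigma(m)-C(\phi)(m)\bigr)=0$ for all $k\in\KK$ and $m\in C(A,\iota)$. Multiplying on the right by an arbitrary $b\in B$ and taking limits along an approximate identity of $\KK$, the nondegeneracy of $\jmath$ forces $\bigl(\sigma(m)-C(\phi)(m)\bigr)b=0$, whence $\sigma(m)=C(\phi)(m)$ in $M(B)$. Functoriality then follows at once: \corref{md funct} gives $\overline{\sigma\circ\phi}=\bar\sigma\circ\bar\phi$ on $M_\KK(A)$, and restricting to $C(A,\iota)$ yields $C(\sigma\circ\phi)=C(\sigma)\circ C(\phi)$; the identity morphism is trivial.

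The only step requiring genuine thought is showing that $\bar\phi$ preserves the relative commutant, and even that dissolves into a one-line check once the formulas from \lemref{extend} and the commutation relation defining $C(A,\iota)$ are combined.
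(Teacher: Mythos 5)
Your proposal is correct and follows essentially the same route as the paper: restrict the extension $\bar\phi$ from \lemref{extend} (with $D=\KK$) to $C(A,\iota)$, verify the image lies in $C(B,\jmath)$ via the commutation $\phi(\iota(k)m)=\phi(m\iota(k))$ together with the extension formulas, and get uniqueness from nondegeneracy of $\jmath$ and functoriality from the uniqueness clause (the paper checks composition by direct computation, you invoke \corref{md funct}; these are interchangeable).
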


\begin{proof}
By \lemref{extend}, $\phi$ extends uniquely to a
$\KK$-strictly continuous
homomorphism $\bar\phi\:M_\KK(A)\to M_\KK(B)$.
Moreover, for $k\in \KK$ and $m\in M_\KK(A)$,
\[
\phi(\iota(k)m)=\jmath(k)\bar\phi(m)
\midtext{and}
\phi(m\iota(k))=\bar\phi(m)\jmath(k).
\]
Thus the restriction
\[
C(\phi)=\bar\phi|_{C(A,\iota)}
\]
is a homomorphism
of $C(A,\iota)$ into $M_\KK(B)$
satisfying \eqref{phi km}--\eqref{phi mk}.
We want to know
that
$C(\phi)$ maps into $C(B,\jmath)$:
since
the range of $C(\phi)$ is contained in $M_\KK(B)$,
we only need to observe that if $m\in C(A,\iota)$ and $k\in\KK$, then
\[
\phi(m)\jmath(k)
=\phi(m\iota(k))
=\phi(\iota(k)m)
=\jmath(k)\phi(m),
\]
so $\phi(m)\in C(B,\jmath)$.
The uniqueness
of $\bar\phi$ subject to \eqref{phi km}
is clear since $\jmath$ is nondegenerate.

For the functoriality,
let $\phi\:(A,\iota)\to (B,\jmath)$ and $\psi\:(B,\jmath)\to (C,\eta)$ be morphisms of \kalg s.
Then 
$\psi\circ\phi\:(A,\iota)\to (C,\eta)$ is a morphism
such that
for all $m\in C(A,\iota)$ and $k\in\KK$ we have
\begin{align*}
\psi\circ\phi(m\iota(k))
&=\psi\bigl(\phi(m\iota(k))\bigr)
\\&=\psi\bigl(C(\phi)(m)\jmath(k))\bigr)
\\&=C(\psi)\bigl(C(\phi)(m)\bigr)\eta(k)
\\&=C(\psi)\circ C(\phi)(m)\eta(k),
\end{align*}
so $C(\psi\circ\phi)=C(\psi)\circ C(\phi)$ by uniqueness.
Since identity morphisms pose no problem, we are done.
\end{proof}

It is also useful to have a nondegenerate version
of this functor:

\begin{cor}[{\cite{fischer}}]\label{nd c functor}
\label{C functor nondegenerate}
Let $(A,\iota)$ and $(B,\jmath)$ be \kalg s, and let $\phi\:A\to M(B)$ be a nondegenerate homomorphism such that
\[
\phi\circ\iota=\jmath.
\]
Then the restriction of
\(the canonical extension to $M(A)$ of\)
$\phi$ to $C(A,\iota)$ is a nondegenerate homomorphism to
$M(C(B,\jmath))$.
Moreover, we have the following functoriality properties:
\begin{itemize}
\item
$C(\id_A)=\id_{C(A,\iota)}$, and

\item
if
$(C,\zeta)$ is another \kalg\
and
$\psi\:B\to M(C)$
is a nondegenerate homomorphism such that $\psi\circ\jmath=\zeta$, then
\[
C(\psi)\circ C(\phi)=C(\psi\circ\phi).
\]
\end{itemize}
\end{cor}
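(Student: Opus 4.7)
The plan is to lift the construction from \lemref{C functor} to the nondegenerate setting, using the compatibility $\phi\circ\iota=\jmath$ and the Fischer isomorphism $\theta$ to control nondegeneracy.

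First I would note that, since $\phi\:A\to M(B)$ is nondegenerate, it extends uniquely to a strictly continuous unital homomorphism $\bar\phi\:M(A)\to M(B)$. I take $C(\phi)$ to be the restriction of $\bar\phi$ to $C(A,\iota)$, which is the only candidate: any extension of the construction of \lemref{C functor} to the nondegenerate category must agree with $\bar\phi$ on $C(A,\iota)$ because $\bar\phi$ is the unique strictly continuous extension of $\phi$.

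Next I would show that $C(\phi)$ lands in $M(C(B,\jmath))$, using the characterization that $M(C(B,\jmath))$ consists precisely of those $n\in M(B)$ that commute elementwise with $\jmath(\KK)$. For $m\in C(A,\iota)$ and $k\in\KK$, the hypothesis $\phi\circ\iota=\jmath$ gives
\[
\bar\phi(m)\jmath(k)=\bar\phi(m)\phi(\iota(k))=\bar\phi(m\iota(k))=\bar\phi(\iota(k)m)=\phi(\iota(k))\bar\phi(m)=\jmath(k)\bar\phi(m),
\]
so $\bar\phi(m)\in M(C(B,\jmath))$, as desired.

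For the nondegeneracy of $C(\phi)\:C(A,\iota)\to M(C(B,\jmath))$, the main trick is to transfer the question across $\theta_A$ and $\theta_B$. Under these isomorphisms, $\phi$ corresponds to a nondegenerate homomorphism $C(A,\iota)\xt\KK\to M(C(B,\jmath)\xt\KK)$, and the computation $\phi(m\iota(k))=\bar\phi(m)\jmath(k)=\theta_B(C(\phi)(m)\xt k)$ identifies this homomorphism with $C(\phi)\xt\id_\KK$. Nondegeneracy of $\phi$ therefore gives
\[
\clspn\bigl\{\bigl(C(\phi)\xt\id\bigr)(C(A,\iota)\xt\KK)\cdot(C(B,\jmath)\xt\KK)\bigr\}=C(B,\jmath)\xt\KK,
\]
which, by looking at the first tensor factor, forces $\clspn\{C(\phi)(C(A,\iota))\cdot C(B,\jmath)\}=C(B,\jmath)$, i.e., $C(\phi)$ is nondegenerate. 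The main obstacle is being sure that this factor-wise deduction is valid; this follows because, picking any rank-one $k\in\KK$ with $k^*k\ne 0$, one can compress the density statement to the first tensor factor.

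Finally, the two functoriality clauses are immediate: $\bar{\id_A}=\id_{M(A)}$ restricts to $\id_{C(A,\iota)}$, and for composable $\phi,\psi$, the chain rule $\overline{\psi\circ\phi}=\bar\psi\circ\bar\phi$ on $M(A)$ (which holds since both sides are strictly continuous and agree on $A$) restricts to $C(\psi\circ\phi)=C(\psi)\circ C(\phi)$ on $C(A,\iota)$.
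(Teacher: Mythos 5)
Your argument is correct. Note, though, that the paper does not actually prove this corollary --- it defers to the discussion preceding Remark~3.2 of Fischer's preprint and explicitly omits the ``routine'' proof --- so there is no in-paper argument to compare against; what you have written is a legitimate filling-in of that omission. The two nontrivial points are exactly the ones you isolate: (1) that $\bar\phi$ carries $C(A,\iota)$ into $M(C(B,\jmath))$, which follows from the characterization of $M(C(B,\jmath))$ as the elements of $M(B)$ commuting with $\jmath(\KK)$ together with $\bar\phi\circ\iota=\jmath$ (your $\phi(\iota(k))$ should strictly be $\bar\phi(\iota(k))$, since $\iota(k)\in M(A)$, but that is how the hypothesis $\phi\circ\iota=\jmath$ is to be read anyway); and (2) nondegeneracy, which you obtain by conjugating by the Fischer isomorphisms $\theta_A,\theta_B$ to identify $\phi$ with $C(\phi)\xt\id_\KK$ and then descending to the first tensor factor. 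For the last step the cleanest justification is a slice map: any nonzero $\omega\in\KK^*$ induces a bounded map $\id\xt\omega$ sending $\clspn\{J\xt\KK\}$ into $J$ and $C(B,\jmath)\xt\KK$ onto a spanning set of $C(B,\jmath)$, forcing $J=C(B,\jmath)$; your rank-one compression is the same idea. The functoriality clauses are, as you say, immediate from uniqueness of strictly continuous extensions.
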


This appears in
\cite[discussion preceding Remark~3.2]{fischer},
and we omit the routine proof.
Our primary use of \corref{C functor nondegenerate} is the following
(see \cite[Remark~3.2]{fischer}):
if $(A,\delta,\iota)$ is a \kco,
then 
$\delta$ restricts to 
a coaction, which we also denote by $C(\delta)$, on $C(A,\iota)$.
This uses the identity
\[
C\bigl(A\xt C^*(G),\iota\xt 1\bigr)=C(A,\iota)\xt C^*(G).
\]

We now have a third functor that we want to officially record with a name.

\begin{lem}\label{c functor}
With the above notation,
the assignments
$(A,\delta,\iota)\mapsto (\relcom(A,\iota),\relcom(\delta))$
and
$\phi\mapsto \relcom(\phi)$
give
a functor
$\relcom$
from \kco s to coactions.
\end{lem}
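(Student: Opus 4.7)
The plan is to leverage the functor from \lemref{C functor} on $C^*$-algebras and only check that it respects coaction structure. Three things need verification: (a) for each \kco\ $(A,\delta,\iota)$, the assignment really produces a coaction $\relcom(\delta)$ on $\relcom(A,\iota)$; (b) for each morphism $\phi\:(A,\delta,\iota)\to (B,\epsilon,\jmath)$ of \kco s, the induced homomorphism $\relcom(\phi)\:\relcom(A,\iota)\to \relcom(B,\jmath)$ is $\relcom(\delta)-\relcom(\epsilon)$ equivariant; and (c) $\relcom$ preserves identities and compositions.

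For (a), this is essentially the content already recalled in the discussion immediately preceding the lemma: using \corref{nd c functor} applied to the nondegenerate homomorphism $\delta\:A\to M(A\xt C^*(G))$ together with the identity
\[
\relcom\bigl(A\xt C^*(G),\iota\xt 1\bigr)=\relcom(A,\iota)\xt C^*(G),
\]
the restriction of $\delta$ to $\relcom(A,\iota)$ lands in $\wilde M(\relcom(A,\iota)\xt C^*(G))$ and inherits the coaction identity and the nondegeneracy condition from $\delta$. I would quickly double-check nondegeneracy of the restriction by using that $\delta$ is trivial on $\iota(\KK)$ to slice out the $\KK$-factor.

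For (b), the key step is to combine equivariance of $\phi$ with the uniqueness of $\KK$-strictly continuous extensions. By \lemref{extend} (or equivalently the construction in the proof of \lemref{C functor}), $\phi$ extends uniquely to a $\KK$-strictly continuous homomorphism $\bar\phi\:M_\KK(A)\to M_\KK(B)$, and $\relcom(\phi)$ is the restriction of $\bar\phi$ to $\relcom(A,\iota)$. The equivariance of $\phi$ gives $(\phi\xt\id)\circ\delta=\epsilon\circ\phi$ on $A$, and both sides extend uniquely by $\KK$-strict continuity to $M_\KK(A)$; these extensions therefore agree. Restricting the resulting equality to $\relcom(A,\iota)$ and using that $\relcom(\delta)$ and $\relcom(\epsilon)$ are the respective restrictions of $\delta$ and $\epsilon$ yields the required diagram
\[
\xymatrix@C+30pt{
\relcom(A,\iota) \ar[r]^-{\relcom(\delta)} \ar[d]_{\relcom(\phi)}
&\wilde M(\relcom(A,\iota)\xt C^*(G)) \ar[d]^{\bar{\relcom(\phi)\xt\id}}
\\
\relcom(B,\jmath) \ar[r]_-{\relcom(\epsilon)}
&\wilde M(\relcom(B,\jmath)\xt C^*(G))
}
\]

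For (c), functoriality at the level of the $C^*$-algebras and their morphisms is exactly \lemref{C functor}, and the equivariance statement adds no new composition obligation. The main obstacle I anticipate is the bookkeeping in (b): one must keep track of the fact that $\relcom(\phi)$ is \emph{not} obtained by composing $\phi$ with anything directly, but via the possibly-degenerate extension $\bar\phi$, and that the ``restriction'' of $\delta$ giving $\relcom(\delta)$ really does pair up correctly with $\bar{\relcom(\phi)\xt\id}$ rather than with some other extension. Once the identification $\relcom(A\xt C^*(G),\iota\xt 1)=\relcom(A,\iota)\xt C^*(G)$ is used to identify the two natural extensions of $\phi\xt\id$, the verification becomes routine.
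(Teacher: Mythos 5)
Your overall architecture (restrict the coaction via \corref{nd c functor} and the identity $\relcom(A\xt C^*(G),\iota\xt 1)=\relcom(A,\iota)\xt C^*(G)$, then check equivariance of $\relcom(\phi)$, then inherit composition from \lemref{C functor}) is the right skeleton, and parts (a) and (c) are fine. The gap is in (b). The assertion that ``both sides extend uniquely by $\KK$-strict continuity to $M_\KK(A)$'' is not covered by \lemref{extend}: that lemma applies to morphisms of \dalg s, i.e.\ maps landing in the $C^*$-algebra itself, whereas $\epsilon\circ\phi$ and $\bar{\phi\xt\id}\circ\delta$ land in $M(B\xt C^*(G))$. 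Worse, the left-hand composite does not even make sense on $\relcom(A,\iota)$: for $m\in \relcom(A,\iota)\setminus A$ one has $\delta(m)(1\xt c)\in \relcom(A,\iota)\xt C^*(G)$, which need not lie in $A\xt C^*(G)$, so $\delta(m)\notin\wilde M(A\xt C^*(G))$ and the $C^*(G)$-strict extension $\bar{\phi\xt\id}$ is simply not defined there. The map you actually need is $\bar{\relcom(\phi)\xt\id}$ on $\wilde M(\relcom(A,\iota)\xt C^*(G))$, and proving that it agrees with what the equivariance of $\phi$ gives you on $A$ is not a formal ``identification of two natural extensions'' --- it is the entire content of the equivariance claim, and you have deferred it as routine.

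The paper itself gives no proof of this lemma; it leans on \cite[Remark~3.2]{fischer} for the restricted coaction and, for the equivariance of $\relcom(\phi)$, on \cite[Lemma~3.6]{klqfunctor2} (see the remark following the proof of \lemref{c of r co}), where the $B(G)$-module structure of a standard coaction is used --- a tool unavailable in your extension-uniqueness framework. Alternatively, the paper's proof of the exact R-coaction analogue, \lemref{c of r co}, shows how to close your gap by hand: it explicitly warns that ``we cannot just extend and restrict'' because $\phi$ and $\phi\xt\id$ may be degenerate, and instead verifies
\[
\epsilon\bigl(\relcom(\phi)(m)\bigr)\bigl(\jmath(k)\xt 1\bigr)(1\xt c)
=\bar{\relcom(\phi)\xt\id}\bigl(\delta(m)\bigr)\bigl(\jmath(k)\xt 1\bigr)(1\xt c)
\]
for all $k\in\KK$ and $c\in C^*(G)$, using $\epsilon\circ\jmath=\jmath\xt 1$, $\delta\circ\iota=\iota\xt 1$, the equivariance of $\phi$ on elements of $A$, and Equation~\eqref{phi km} to shuttle between $\phi$ and $\relcom(\phi)$. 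Adapting that computation (with $\xt$ in place of $\xm$) is what your step (b) actually requires.
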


The name $\relcom$ stands for ``relative commutant'',
and
the functor $\relcom$ is the third and final step in the Fischer maximalization process.

\subsection*{Fischer Maximalization Process}
\label{fischer}

The end result of the composition
\[
\xymatrix{
(A,\delta)
\ar@{|->}[d]^{\cpc}
\\
\text{equivariant action on crossed product}
\ar@{|->}[d]^{\cpa}
\\
\text{\kco\ on double crossed product}
\ar@{|->}[d]^{\relcom}
\\
\text{coaction on relative commutant}
}
\]
is the \emph{maximalization} $(A^m,\delta^m)$ of the coaction $(A,\delta)$.
The canonical isomorphism
$\theta_A\:\relcom(A,\iota)\xt\KK\variso A$
can be used to construct a $\delta^m-\delta$ equivariant surjection
$\psi_A\:A^m\to A$ such that
\[
\psi_A\rtimes G\:A^m\rtimes_{\delta^m} G\to A\rtimes_\delta G
\]
is an isomorphism --- this surjection $\psi_A$ is an official part of the maximalization of $(A,\delta)$.
Moreover, maximalization is a functor since each of the steps in its construction is functorial.
If $\phi\:(A,\delta)\to (B,\epsilon)$ is a morphism of coactions,
we 
write $\phi^m$ for
the associated morphism between maximalizations.
The surjection $\psi$ is  
natural 
in the sense that
if $\phi\:(A,\delta)\to (B,\epsilon)$ is a morphism of coactions then
the diagram
\begin{equation}\label{natural}
\xymatrix{
A^m \ar[r]^{\phi^m} \ar[d]_{\psi_A}
&B^m \ar[d]^{\psi_B}
\\
A \ar[r]_-\phi
&B
}
\end{equation}
commutes
(and in fact this determines $\phi$ uniquely since $\psi$ is surjective).

\section{R-coactions}\label{r-co sec}

We begin with the maximal-tensor-product version of \defnref{d mult}.
\begin{defn}\label{tilde}
For $C^*$-algebras $A,D$ we define
\begin{align*}
\wilde M(A\xm D)
&=\{m\in M(A\xm D):
\\&\hspace{.5in}m(1\xt D)\cup (1\xt D)m\subset A\xm D\}.
\end{align*}
\end{defn}
Note that, using the nondegenerate homomorphism
\[
d\mapsto 1\xm d\:D\to M(A\xm D),
\]
we have
\[
\wilde M(A\xm D)=M_D(A\xm D),
\]
where the right-hand side denotes the $D$-multipliers, as in \defnref{d mult}.

We are ready to define R-coactions,
but first we need an appropriate version of the homomorphism $\delta_G$.
Let $\deltagr\:C^*(G)\to M(C^*(G)\xm C^*(G))$
be the integrated form of the unitary representation $s\mapsto s\xm s$ for $s\in G$.

In
the following definition, and many places elsewhere,
we need to combine maximal tensor products and multipliers.
This can be delicate, and in particular we think it prudent to explicitly record the following 
convention.
Given nondegenerate homomorphisms
$\pi\:A\to M(C)$ and $\rho\:B\to M(D)$,
we denote by $\pi\xm\rho$
the associated homomorphism
\[
\pi\xm\rho\:A\xm B\to M(C\xm D),
\]
which is of course also nondegenerate.
Note that, unlike for minimal tensor products,
the maximal tensor product
$M(C)\xm M(D)$ need not embed faithfully in
$M(C\xm D)$,
so our $\pi\xm\rho$ does \emph{not}
necessarily coincide with the
canonical map
\[
A\xm B\to M(C)\xm M(D)
\]
followed by an inclusion into $M(C\xm D)$.
A particular case of our convention arises when one of the maps $\pi,\rho$ is the identity ---
as, for example in the following definition.

\begin{defn}\label{R coaction}
An \emph{R-coaction} of a locally compact group $G$ is
a pair $(A,\delta)$,
where
$A$ is a $C^*$-algebra
and $\delta$ is
an injective nondegenerate homomorphism
\[
\delta\:A\to \wilde M(A\xm C^*(G))
\]
that satisfies the \emph{coaction identity}
\[
(\delta\xm \id)\circ \delta=(\id\xm \deltagr)\circ \delta
\]
and is \emph{coaction-nondegenerate}:
\[
\clspn\{\delta(A)(1\xm C^*(G))\}=A\xm C^*(G).
\]
\end{defn}
Note in particular that $\deltagr$ is an R-coaction.

This is the style of coaction proposed by Raeburn in \cite{rae:representation}.
Since then it has become more common to use
the minimal tensor product $\xt$ rather than $\xm$.
In this paper, the term ``coaction'' by itself will mean the usual ``standard'' coaction; occasionally we may insert the adjective \emph{standard} to avoid any confusion.

The theory of R-coactions is,
in a limited way,
parallel to that of \coaction s (for example, see \lemref{lem injective} below) --- with a few notable omissions (see the discussion following \lemref{lem injective}).
However, we will limit ourselves to only those aspects that we need; we have no reason to recast the entire theory of coactions in terms of R-coactions.

Regarding \defnref{R coaction},
note that, as for \coaction s, coaction-non\-de\-gen\-eracy implies nondegeneracy as a homomorphism into the multiplier algebra.
In fact, again as for \coaction s:
\begin{lem}\label{lem injective}
Let $\delta\:A\to M(A\xm C^*(G))$ be a homomorphism 
satisfying
\begin{enumerate}
\item
$(\delta\xm\id)\circ\delta=(\id\xm\deltagr)\circ\delta$
and

\item
$\clspn\{\delta(A)(1\xm C^*(G))=A\xm C^*(G)$.
\end{enumerate}
Then $\delta$
is nondegenerate and injective, and also maps into
$\wilde M(A\xm C^*(G))$,
and hence
is an $R$-coaction.

Moreover,
letting
\[
\Upsilon\:A\xm C^*(G)\to A\xt C^*(G)
\]
be the canonical surjection from maximal to minimal tensor products,
the homomorphism
$\delta\sify=\Upsilon\circ\delta$
is a \coaction.
\end{lem}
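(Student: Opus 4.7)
The plan is to establish the following in order: (a) $\delta$ maps into $\wilde M(A\xm C^*(G))$; (b) $\delta$ is nondegenerate; (c) $\delta$ is injective; and (d) $\delta\sify=\Upsilon\circ\delta$ is a standard coaction. Items (a)--(c) together with the hypotheses make $\delta$ an R-coaction. Items (a) and (b) are formal consequences of hypothesis (ii): for (a), each product $\delta(a)(1\xm c)$ lies in $\clspn\{\delta(A)(1\xm C^*(G))\}=A\xm C^*(G)$, and adjoints give $(1\xm c)\delta(a)\in A\xm C^*(G)$; for (b), we have $\clspn\{\delta(A)(A\xm C^*(G))\}\supset\clspn\{\delta(A)(1\xm C^*(G))(A\xm C^*(G))\}=(A\xm C^*(G))^2=A\xm C^*(G)$, using (ii) and the $C^*$-identity.

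The main obstacle is (c). My strategy is to produce a left inverse for $\delta$ via the counit $\epsilon\:C^*(G)\to\C$ (the integrated form of the trivial representation), a unital $*$-homomorphism inducing a nondegenerate $*$-homomorphism $\id_A\xm\epsilon\:A\xm C^*(G)\to A$ that extends strictly to multipliers. Set $E:=(\id\xm\epsilon)\circ\delta\:A\to M(A)$. A direct check on group elements yields the counit identity $(\epsilon\xm\id)\circ\deltagr=\id_{C^*(G)}$. Applying $\id_A\xm\epsilon\xm\id_{C^*(G)}$ (extended to multipliers) to the R-coaction identity right-multiplied by $1\xm 1\xm c$ collapses the right-hand side, via the counit identity, to $\delta(a)(1\xm c)$, while the left-hand side becomes $(\id\xm\epsilon\xm\id)\circ(\delta\xm\id)$ evaluated at $\delta(a)(1\xm c)\in A\xm C^*(G)$. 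Thus the composite $\Phi:=(\id\xm\epsilon\xm\id)\circ(\delta\xm\id)\:A\xm C^*(G)\to M(A\xm C^*(G))$, a $*$-homomorphism, fixes every element of the form $\delta(a)(1\xm c)$, and by hypothesis (ii) it must equal the canonical inclusion. On a simple tensor $a\xm c$ one computes $\Phi(a\xm c)=E(a)\xm c$, so $E(a)\xm c=a\xm c$ in $M(A\xm C^*(G))$; slicing in the $C^*(G)$ slot by a state $\omega$ with $\omega(c)=1$ yields $E(a)=a$. Hence $E=\id_A$, so $\id\xm\epsilon$ is a left inverse of $\delta$, proving (c).

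For (d), the canonical surjection $\Upsilon$ is a nondegenerate $*$-homomorphism and extends strictly to $\bar\Upsilon\:M(A\xm C^*(G))\to M(A\xt C^*(G))$, making $\delta\sify=\bar\Upsilon\circ\delta$ a nondegenerate $*$-homomorphism. The key fact is that the corresponding surjection $\Upsilon_{G,G}\:C^*(G)\xm C^*(G)\to C^*(G)\xt C^*(G)$ intertwines $\deltagr$ with $\deltag$, since both reduce on group elements to $s\mapsto s\otimes s$. Applying the threefold surjection $\Upsilon_3$ (extended to multipliers) to the R-coaction identity for $\delta$ yields the standard coaction identity for $\delta\sify$; applying $\Upsilon$ to hypothesis (ii) yields the coaction-nondegeneracy condition $\clspn\{\delta\sify(A)(1\xt C^*(G))\}=A\xt C^*(G)$; and $\delta\sify(a)(1\xt c)=\bar\Upsilon(\delta(a)(1\xm c))\in A\xt C^*(G)$ shows that $\delta\sify$ maps into $\wilde M(A\xt C^*(G))$. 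Injectivity of $\delta\sify$ follows by rerunning the counit slice-map argument with $\xt$ in place of $\xm$, or by citing the standard-coaction analogue of the present lemma.
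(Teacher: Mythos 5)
Your proof is correct, and for the one nontrivial claim --- injectivity --- it takes a genuinely different route from the paper. The paper disposes of nondegeneracy and the $\wilde M$ condition exactly as you do, then proves injectivity \emph{indirectly}: it first checks that $\delta\sify=\Upsilon\circ\delta$ is a standard coaction (calling the coaction identity a routine diagram chase and the nondegeneracy obvious), invokes the known fact that standard coactions defined without an injectivity hypothesis are automatically injective (citing \cite[Lemma~2.2]{graded}), and concludes that $\delta$ is injective because $\Upsilon\circ\delta$ is. You instead run the counit/slice-map argument --- which is essentially the content of the cited lemma --- directly at the level of the maximal tensor product, producing the left inverse $(\id\xm\epsilon)\circ\delta=\id_A$ from the counit identity $(\epsilon\xm\id)\circ\deltagr=\id$ and hypothesis (ii). Your version is more self-contained and, as you note, avoids any appeal to the standard-coaction literature; it also works smoothly because $\xm$ is functorial for arbitrary $*$-homomorphisms and slice maps by states factor through $\Upsilon$. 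The paper's version is shorter and reuses an existing result, and it has the mild structural advantage that injectivity of $\delta\sify$ (which is also needed for the second assertion of the lemma) is obtained along the way rather than by ``rerunning the argument with $\xt$ in place of $\xm$'' as you suggest at the end; but note that injectivity of $\delta\sify$ is not actually asserted separately in the lemma, so that last step of yours is optional. In a fully written-out version you would want to justify the small multiplier-algebra manipulations (that $\bar{\delta\xm\id}(1\xm c)=1\xm1\xm c$, that composites of strict extensions of nondegenerate maps are the extensions of the composites, and that $E(a)\xm c=a\xm c$ can be sliced after multiplying into $A\xm C^*(G)$), but these are routine and do not affect the validity of the argument.
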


\begin{proof}
First of all, (ii) clearly implies that $\delta$ is nondegenerate and maps into $\wilde M(A\xm C^*(G))$.
Before showing that $\delta$ is injective,
we 
note
that $\delta\sify$ is a \coaction:
the coaction identity 
is a routine diagram chase,
and the coaction-nondegeneracy is obvious.
Thus,
by \cite[Lemma~2.2]{graded}, for example,
$\delta\sify$
is injective,
and hence 
$\delta$ 
is injective.
\end{proof}

\begin{defn}
In the notation of \lemref{lem injective}, we call $\delta\sify$ the \emph{standardization}
of the R-coaction $\delta$.
\end{defn}

In the opposite direction,
we do not know whether
every \coaction\ 
is the standardization of some R-coaction;
it is true for \coaction s associated with Fell bundles,
and in particular when $G$ is discrete
or $A$ is the (full) crossed product of an action of $G$,
and --- crucially for us --- it is true for maximal coactions (see \thmref{main}),
but the general case remains elusive.

Buss and Echterhoff prove in
\cite[Theorem~5.1]{BEmaximality}
that when $G$ is discrete, a coaction $\delta$ of $G$ on $A$ lifts to a homomorphism 
$A\to A\xm C^*(G)$ 
if and only if $\delta$ is maximal.
Thus, it begins to look as though the property of lifting to an R-coaction is unique to maximal coactions; however, if $G$ is nonamenable but $C^*(G)$ is nuclear,
then the canonical coaction of $G$ on $C^*_r(G)$
is not maximal, but
does lift to a homomorphism into $M(A\xm C^*(G))$
(see \cite[Remark~5.3]{BEmaximality}).
We thank Alcides Buss for mentioning \cite[Theorem~5.1 and Remark~5.3]{BEmaximality} to us.

Additionally,
even if we know that a \coaction\ $\delta$ is 
the standardization of
some R-coaction $\epsilon$,
we have no idea whether $\epsilon$ is unique --- 
we cannot rule out the possibility of adding suitable elements of $\ker\Upsilon$ to get a different R-coaction 
with standardization $\delta$.
We formalize these questions:

\begin{q}\label{exist}
Let $(A,\delta)$ be a \coaction.
\begin{enumerate}
\item
Is there an R-coaction $\epsilon$ such that $\epsilon\sify=\delta$?

\item
If the answer to (i) is yes, is $\epsilon$ unique?
\end{enumerate}
\end{q}

A notable example of a construction for \coaction s that is missing for R-coactions is normalization.
As explained in \cite[Example~A.71 and the surrounding discussion]{enchilada},
if $G$ is a nonamenable discrete group then there is no R-coaction on $C^*_r(G)$ such that $\lambda_s\mapsto \lambda_s\xm s$ for $s\in G$
(alternatively, this follows from \cite[Theorem~5.1]{BEmaximality}).
This has numerous negative consequences:
for example,
we do not know whether every R-coaction has a normalization (in the na\"ive sense of what a normalization would mean here);
in particular,
if the canonical R-coaction $s\mapsto s\xm s$ on $C^*(G)$ has a normalization, it will not in general be the regular representation (as it is for the \coaction\ $s\mapsto s\xt s$).
Additionally,
although it is possible to define covariant representations of an R-coaction $(A,\delta)$ on Hilbert space,
if $(\pi,\mu)$ is a covariant representation
then $\pi(a)\mapsto \ad(\mu\xm\id)(w_G)(\pi(a)\xm 1)$ is not necessarily an R-coaction on the image $\pi(A)$ (unlike the case of \coaction s).

For R-coactions we adopt the following conventions:
\begin{defn}\label{equivariant}
If $(A,\delta)$ and $(B,\epsilon)$ are R-coactions,
a 
homomorphism $\phi\:A\to B$ is
\emph{$\delta-\epsilon$ equivariant} if the diagram
\[
\xymatrix{
A \ar[r]^-\delta \ar[d]_\phi
&\wilde M(A\xm C^*(G)) \ar[d]^{\bar{\phi\xm\id}}
\\
B \ar[r]_-\epsilon
&\wilde M(B\xm C^*(G))
}
\]
commutes.
We also say that $\phi\:(A,\delta)\to (B,\epsilon)$ is a \emph{morphism} of R-coactions.
\end{defn}

Note that the existence of the homomorphism $\bar{\phi\xm\id}$
is guaranteed by \lemref{extend}.

The existence of
a category of R-coactions
rests upon
the following lemma.

\begin{lem}\label{tilde functor}
For a fixed $C^*$-algebra $D$,
the assignments $A\mapsto \wilde M(A\xm D)$
and $\phi\mapsto \bar{\phi\xm\id}$
give a functor on the category of $C^*$-algebras.
\end{lem}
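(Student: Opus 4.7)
The plan is to realize this as a direct instance of the already-established $D$-multiplier functor (\corref{md funct}) applied to the maximal tensor product. The first thing I would check is that the assignment $A \mapsto A \xm D$, together with the canonical nondegenerate homomorphism $\iota_A \colon d \mapsto 1 \xm d$ from $D$ into $M(A \xm D)$, is itself functorial: given a homomorphism $\phi \colon A \to B$, the universal property of the maximal tensor product applied to the pair of commuting homomorphisms $a \mapsto \phi(a) \xm 1$ and $d \mapsto 1 \xm d$ into $M(B \xm D)$ produces a homomorphism $\phi \xm \id \colon A \xm D \to B \xm D$ with the expected action on elementary tensors. (This is the place where the convention recorded just before \defnref{R coaction} matters, since for the maximal tensor product one cannot simply tensor multiplier algebras; we use only classical homomorphisms of $A$ and $D$ so no extension issue arises at this stage.)

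Next I would verify that $\phi \xm \id$ is a morphism of \dalg s from $(A \xm D,\iota_A)$ to $(B \xm D,\iota_B)$, i.e., that it is $\iota_A$--$\iota_B$ compatible in the sense of \defnref{d alg}. This is immediate from density on elementary tensors:
\[
(\phi\xm\id)\bigl((1\xm d)(a\xm d')\bigr)
=\phi(a)\xm dd'
=(1\xm d)(\phi\xm\id)(a\xm d'),
\]
which extends by linearity and continuity to all of $A\xm D$.

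With that in hand, \lemref{extend} gives a unique $D$-strictly continuous homomorphism
\[
\bar{\phi\xm\id}\colon M_D(A\xm D)\longrightarrow M_D(B\xm D),
\]
and since $\wilde M(A\xm D) = M_D(A\xm D)$ (as noted just after \defnref{tilde}), this is exactly the desired arrow on tilde-multiplier algebras. So the assignments do define both an object and a morphism map into the category of $C^*$-algebras.

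Finally, functoriality (preservation of identities and composition) reduces to the uniqueness clause of \lemref{extend}: $\id_{\wilde M(A\xm D)}$ is a $D$-strictly continuous extension of $\id_A\xm\id$, so it must equal $\bar{\id_A\xm\id}$; and for composable $\phi,\psi$ the composite $\bar{\psi\xm\id}\circ\bar{\phi\xm\id}$ is $D$-strictly continuous and extends $(\psi\circ\phi)\xm\id$, forcing it to coincide with $\bar{(\psi\circ\phi)\xm\id}$. Equivalently, one invokes \corref{md funct} directly, after the verification that $\phi\mapsto \phi\xm\id$ is a functor from $C^*$-algebras to \dalg s as above. There is no significant obstacle here; the only mild subtlety is being careful that the convention for $\phi\xm\id$ in the maximal-tensor-product setting really lands in $B\xm D$, which it does since the relevant commuting representations use the classical homomorphism $\phi$ on one leg.
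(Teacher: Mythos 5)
Your proposal is correct and follows the paper's own route exactly: the paper's proof is the one-line observation that $A\mapsto (A\xm D,\,d\mapsto 1\xm d)$ and $\phi\mapsto\phi\xm\id$ give a functor from $C^*$-algebras to \dalg s, which is then composed with the $M_D$ functor of \corref{md funct}. You have simply filled in the routine verifications (existence of $\phi\xm\id$ as a classical homomorphism landing in $B\xm D$, the $\iota_A$--$\iota_B$ compatibility, and the uniqueness argument for functoriality) that the paper leaves implicit.
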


\begin{proof}
The assignments $A\mapsto A\xm D$ and $\phi\mapsto \phi\xm\id$ give a functor from $C^*$-algebras to \dalg s,
and then by applying \corref{md funct} we can compose to get the desired functor.
\end{proof}

\begin{cor}
With morphisms as in \defnref{equivariant},
we get a category of R-coactions.
\end{cor}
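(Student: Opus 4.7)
The plan is to deduce this corollary directly from \lemref{tilde functor}, specialized to $D=C^*(G)$. There are only three things to check in assembling a category: closure of morphisms under composition, the existence of identity morphisms, and associativity. Associativity is automatic because composition of homomorphisms of $C^*$-algebras is already associative, so I would concentrate on the other two.

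For identity morphisms, I would invoke the uniqueness clause of \lemref{extend}: the extension $\bar{\id_A\xm\id}$ of $\id_A\xm\id\colon A\xm C^*(G)\to A\xm C^*(G)$ to $\wilde M(A\xm C^*(G))$ is necessarily the identity, so the square in \defnref{equivariant} for $\id_A$ collapses to $\delta=\delta$ and hence commutes.

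For composition, I would take morphisms $\phi\colon(A,\delta)\to(B,\epsilon)$ and $\psi\colon(B,\epsilon)\to(C,\zeta)$, stack their equivariance squares vertically, and read off the equivariance of $\psi\circ\phi$ from the outer rectangle:
\[
\xymatrix{
A \ar[r]^-\delta \ar[d]_\phi
&\wilde M(A\xm C^*(G)) \ar[d]^{\bar{\phi\xm\id}}
\\
B \ar[r]^-\epsilon \ar[d]_\psi
&\wilde M(B\xm C^*(G)) \ar[d]^{\bar{\psi\xm\id}}
\\
C \ar[r]_-\zeta
&\wilde M(C\xm C^*(G))
}
\]
The one nontrivial ingredient is the identity $\bar{\psi\xm\id}\circ\bar{\phi\xm\id}=\bar{(\psi\circ\phi)\xm\id}$, which is exactly the functoriality content of \lemref{tilde functor}. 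There is no real obstacle here; the substantive work has already been done in \lemref{tilde functor}, which in turn rested on the functoriality of the $D$-multiplier construction from \corref{md funct}.
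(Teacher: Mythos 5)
Your proposal is correct and follows essentially the same route as the paper, which likewise derives the corollary from \lemref{tilde functor} (composition being the only substantive point) and treats identities and associativity as routine. Your version merely spells out the details that the paper leaves implicit.
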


\begin{proof}
With the aid of \lemref{tilde functor}, it is easy to see that morphisms can be composed, and then associativity and identity morphisms are routinely checked.
\end{proof}

\begin{lem}\label{s functor}
A homomorphism that is equivariant for R-coactions is also equivariant for the associated 
standardizations.
Consequently,
the
assignments $\delta\mapsto \delta\sify$
give
a functor
from 
R-coactions to 
\coaction s.
\end{lem}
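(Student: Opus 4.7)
The plan is to reduce the first assertion to the naturality of the canonical surjection $\Upsilon\:A\xm C^*(G)\to A\xt C^*(G)$ on tilde multipliers, which in turn follows from the uniqueness clause of \lemref{extend} applied with $D=C^*(G)$. First I would verify that, for any $C^*$-algebra $A$, both $A\xm C^*(G)$ and $A\xt C^*(G)$ are naturally \dalg s via the nondegenerate homomorphisms $c\mapsto 1\xm c$ and $c\mapsto 1\xt c$ respectively, and that $\Upsilon_A$ is a morphism of \dalg s --- a routine check on elementary tensors. By \lemref{extend}, $\Upsilon_A$ then extends uniquely to a $C^*(G)$-strictly continuous homomorphism $\bar\Upsilon_A\:\wilde M(A\xm C^*(G))\to\wilde M(A\xt C^*(G))$, and this is how one interprets the formula $\delta\sify=\Upsilon\circ\delta$ that defines the standardization on elements $\delta(a)\in\wilde M(A\xm C^*(G))$.

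Next, for any homomorphism $\phi\:A\to B$, the naturality identity
\[
(\phi\xt\id)\circ\Upsilon_A=\Upsilon_B\circ(\phi\xm\id)
\]
holds on elementary tensors, and hence throughout $A\xm C^*(G)$. The two compositions $\bar{\phi\xt\id}\circ\bar\Upsilon_A$ and $\bar\Upsilon_B\circ\bar{\phi\xm\id}$ are then both $C^*(G)$-strictly continuous homomorphisms from $\wilde M(A\xm C^*(G))$ to $\wilde M(B\xt C^*(G))$ that agree on the $C^*(G)$-strictly dense subalgebra $A\xm C^*(G)$, and so by the uniqueness part of \lemref{extend} they are equal.

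Now, given a morphism $\phi\:(A,\delta)\to(B,\epsilon)$ of R-coactions, applying this identity to $\delta(a)\in\wilde M(A\xm C^*(G))$ and then invoking R-equivariance yields
\[
\bar{\phi\xt\id}(\delta\sify(a))=\bar{\phi\xt\id}(\bar\Upsilon_A(\delta(a)))=\bar\Upsilon_B(\bar{\phi\xm\id}(\delta(a)))=\bar\Upsilon_B(\epsilon(\phi(a)))=\epsilon\sify(\phi(a)),
\]
so $\phi$ is $\delta\sify-\epsilon\sify$ equivariant. Functoriality is then immediate: the object assignment is $(A,\delta)\mapsto(A,\delta\sify)$ and the morphism assignment leaves the underlying homomorphism unchanged, so identities and compositions are preserved on the nose. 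The only real subtlety is bookkeeping which tilde multiplier algebra each map lives in, and this is handled cleanly by the $D$-multiplier framework of \secref{fischer construction}; the main obstacle --- that $\Upsilon_A$ is \emph{surjective} rather than injective, so its extension to multipliers is not automatic --- is circumvented precisely because it \emph{is} a morphism of \dalg s, which is all \lemref{extend} requires.
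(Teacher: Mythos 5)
Your proposal is correct and takes essentially the same route as the paper, which simply cites commutativity of the equivariance rectangle together with naturality of $\Upsilon$; you have just spelled out, via \lemref{extend} and the uniqueness of $D$-strictly continuous extensions, exactly why that naturality persists at the level of the tilde multiplier algebras. No gaps.
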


\begin{proof}
Let $\phi\:(A,\delta)\to (B,\epsilon)$ be a morphism of R-coactions.
Then the diagram
\[
\xymatrix{
&&M(A\xt C^*(G)) \ar[ddd]^{\phi\xt\id}
\\
A \ar[r]_-\delta \ar[d]_\phi \ar@/^/[urr]^-{\delta\sify}
&M(A\xm C^*(G)) \ar[d]^{\phi\xm\id} \ar[ur]_\Upsilon
\\
B \ar[r]^-\epsilon \ar@/_/[drr]_-{\epsilon\sify}
&M(B\xm C^*(G)) \ar[dr]^\Upsilon
\\
&&M(B\xt C^*(G))
}
\]
commutes,
because the left-hand rectangle commutes by assumption and $\Upsilon$ is natural.
This proves the first part, and then the functoriality follows from a routine computation.
\end{proof}

\begin{rem}
It is natural to wonder whether \lemref{s functor}
is also true in the other direction:
if $(A,\delta)$ and $(B,\epsilon)$ are R-coactions
and
$\phi\:(A,\delta\sify)\to (B,\epsilon\sify)$ is a morphism
of \coaction s,
is $\phi$ is also
$\delta-\epsilon$
equivariant?
We do not know.
This is another way in which the theory of R-coactions is impoverished in comparison to \coaction s,
and consequently is why we do not want to establish any more of the theory concerning R-coactions than we need.
\end{rem}

\begin{rem}
We have defined a category of R-coactions in which the morphisms are homomorphisms between the $C^*$-algebras themselves.
It is possible to define a ``nondegenerate'' version, using nondegenerate homomorphisms into multiplier algebras.
However, since we have in mind no immediate application of this, we eschew it for now.
\end{rem}

\begin{defn}\label{dual R coaction}
The \emph{dual R-coaction} $\direct\alpha$ on the crossed product
$A\rtimes_\alpha G$ of an action $(A,\alpha)$ is defined as the integrated form of the covariant representation
\[
\bigl(i_A\xm 1,(i_G\xm\id)\circ\deltagr\bigr).
\]
\end{defn}

\begin{lem}\label{R alpha}
With the above notation, $\direct\alpha$ really is an R-coaction.
Moreover, the standardization
$\direct\alpha\sify$
of the dual R-coaction
is the usual dual \coaction\ $\what\alpha$.
\end{lem}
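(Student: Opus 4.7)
The plan is to first produce $\direct\alpha$ by appealing to the universal property of the crossed product, then verify the R-coaction axioms with the aid of \lemref{lem injective}, and finally identify the standardization via uniqueness of integrated forms. I begin by noting that $(i_A \xm 1, (i_G \xm \id)\circ \deltagr)$ is a covariant representation of $(A, \alpha)$ in $M((A\rtimes_\alpha G) \xm C^*(G))$: the second entry is the integrated form of the strictly continuous unitary representation $s \mapsto i_G(s) \xm s$, and the covariance identity $(i_A \xm 1)(\alpha_s(a)) = (i_G(s) \xm s)(i_A(a) \xm 1)(i_G(s) \xm s)^*$ reduces to $i_A(\alpha_s(a)) \xm 1 = i_G(s) i_A(a) i_G(s)^* \xm 1$, which holds by the covariance of $(i_A, i_G)$. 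The universal property yields the nondegenerate integrated form $\direct\alpha\colon A\rtimes_\alpha G \to M((A\rtimes_\alpha G) \xm C^*(G))$.

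Next I verify the coaction identity. Both $(\direct\alpha \xm \id) \circ \direct\alpha$ and $(\id \xm \deltagr)\circ \direct\alpha$ are integrated forms of the same covariant representation, namely the pair $\bigl(a\mapsto i_A(a)\xm 1\xm 1,\ s\mapsto i_G(s) \xm s \xm s\bigr)$: on $i_A(a)$ both give $i_A(a) \xm 1 \xm 1$, and on the unitary $i_G(s)$ both give $i_G(s) \xm s \xm s$, using $\deltagr(s) = s \xm s$. Uniqueness of integrated forms then yields the equality. For coaction-nondegeneracy, I exploit the fact (noted after \defnref{R coaction}) that $\deltagr$ is itself an R-coaction, so $\clspn\{\deltagr(C^*(G))(1 \xm C^*(G))\} = C^*(G) \xm C^*(G)$. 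For $a \in A$ and $f, c \in C^*(G)$, the computation
\[
\direct\alpha(i_A(a) i_G(f))(1 \xm c) = (i_A(a) \xm 1)\,(i_G \xm \id)\bigl(\deltagr(f)(1 \xm c)\bigr),
\]
combined with the coaction-nondegeneracy of $\deltagr$ and left-multiplication by elements of $i_A(A) \xm 1$, produces every element $i_A(a) i_G(f') \xm c'$ in the closed span; since these span $(A\rtimes_\alpha G)\xm C^*(G)$, coaction-nondegeneracy of $\direct\alpha$ follows. \lemref{lem injective} now delivers that $\direct\alpha$ is injective, lands in $\wilde M((A\rtimes_\alpha G)\xm C^*(G))$, and is an R-coaction.

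Finally, for the standardization, $\direct\alpha\sify = \Upsilon \circ \direct\alpha$ by definition. Naturality of $\Upsilon$ gives $\Upsilon \circ (i_A \xm 1) = i_A \xt 1$, and checking on $s\in G$ yields $\Upsilon \circ (i_G \xm \id) \circ \deltagr = (i_G \xt \id) \circ \delta_G$, since both send $s$ to $i_G(s) \xt s$. Since $\what\alpha$ is by definition the integrated form of the covariant representation $(i_A \xt 1, (i_G \xt \id) \circ \delta_G)$, uniqueness of the integrated form forces $\Upsilon \circ \direct\alpha = \what\alpha$. The main technical point I anticipate is the coaction-nondegeneracy; it is the one place where manipulations with the maximal tensor product could cause trouble, and it is the reason we need the fact that $\deltagr$ is itself already known to be an R-coaction.
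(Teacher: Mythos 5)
Your proposal is correct and matches the paper's intent exactly: the paper dismisses both assertions as ``routine calculations,'' and your argument --- building $\direct\alpha$ as the integrated form of the covariant pair $(i_A\xm 1,(i_G\xm\id)\circ\deltagr)$, checking the coaction identity and coaction-nondegeneracy on generators so that \lemref{lem injective} applies, and identifying $\Upsilon\circ\direct\alpha$ with $\what\alpha$ by uniqueness of integrated forms --- is precisely the routine verification being alluded to. You have also correctly isolated the only genuinely delicate point, namely that coaction-nondegeneracy of $\direct\alpha$ rests on that of $\deltagr$ in the maximal tensor product.
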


\begin{proof}
Both parts follow from routine calculations.
\end{proof}

The following lemma shows
that the dual R-coaction is natural:

\begin{lem}\label{dual R coaction functor}
If $\phi\:(A,\alpha)\mapsto (B,\beta)$ is a morphism of actions
then the crossed product $\phi\rtimes G\:A\rtimes_\alpha G\to B\rtimes_\beta G$
is $\direct\alpha-\direct\beta$ equivariant,
and hence gives a morphism
\[
\phi\rtimes G\:\bigl(A\rtimes_\alpha G,\direct\alpha\bigr)\to \bigl(B\rtimes_\beta G,\direct\beta\bigr).
\]
In this way 
the assignments 
$(A,\alpha)\mapsto (A\rtimes_\alpha G,\direct\alpha)$
give
a functor from actions to R-coactions.
\end{lem}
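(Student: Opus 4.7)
The plan is to verify the $\direct\alpha-\direct\beta$ equivariance of $\phi\rtimes G$ directly from the definitions, and then to deduce functoriality from the fact that the crossed-product construction is already known to be functorial at the level of $C^*$-algebras. Note that we cannot shortcut the equivariance verification by passing to standardizations, since \lemref{s functor} goes only one way.

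First I would observe that the right-hand vertical arrow in the equivariance diagram of \defnref{equivariant} makes sense: by \lemref{tilde functor} applied to the possibly degenerate homomorphism $\phi\rtimes G\:A\rtimes_\alpha G\to B\rtimes_\beta G$, we obtain a well-defined homomorphism
\[
\bar{(\phi\rtimes G)\xm\id}\:\wilde M\bigl(A\rtimes_\alpha G\xm C^*(G)\bigr)\to \wilde M\bigl(B\rtimes_\beta G\xm C^*(G)\bigr).
\]
To verify that the diagram commutes, it suffices to evaluate both composite paths on elements of the spanning set $\{i_A(a)i_G^\alpha(c):a\in A,c\in C^*(G)\}$.

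By \defnref{dual R coaction}, $\direct\alpha(i_A(a)i_G^\alpha(c))=(i_A(a)\xm 1)(i_G^\alpha\xm\id)(\deltagr(c))$. Applying $\bar{(\phi\rtimes G)\xm\id}$ and using both the $C^*(G)$-module behavior supplied by \lemref{extend} and the defining identity $(\phi\rtimes G)(i_A(a)i_G^\alpha(c))=i_B\circ\phi(a)i_G^\beta(c)$ (recorded in \lemref{ac x pr comp}), this becomes $(i_B\circ\phi(a)\xm 1)(i_G^\beta\xm\id)(\deltagr(c))$. The other path sends $i_A(a)i_G^\alpha(c)$ first to $i_B\circ\phi(a)i_G^\beta(c)$ and then, via $\direct\beta$, to exactly the same expression, so the diagram commutes. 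Functoriality follows at once: given composable morphisms of actions, both $(\psi\circ\phi)\rtimes G$ and $(\psi\rtimes G)\circ(\phi\rtimes G)$ agree on the generators $i_A(a)i_G^\alpha(c)$, hence coincide by density, and identity morphisms are immediate.

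The main technical obstacle --- modest, but the reason this lemma needs a proof at all --- is the possible degeneracy of $\phi\rtimes G$. This is precisely what makes the right-hand vertical arrow in the equivariance diagram nontrivial to define, and it is what forces us to route the computation through the $D$-multiplier machinery of \secref{fischer construction} (via \lemref{extend} and \lemref{tilde functor}) rather than simply writing $(\phi\rtimes G\xm\id)\circ\direct\alpha=\direct\beta\circ(\phi\rtimes G)$ as a literal composition of $*$-homomorphisms.
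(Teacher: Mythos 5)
Your proof is correct and follows essentially the same route as the paper's: verify the $\direct\alpha$--$\direct\beta$ equivariance on generators of the crossed product and then invoke the known functoriality of $(A,\alpha)\mapsto A\rtimes_\alpha G$. The only difference is cosmetic --- the paper checks the two generator families $i_A(a)$ and $i_G^\alpha(s)$ separately, while you work with products $i_A(a)i_G^\alpha(c)$ and are somewhat more explicit about the $D$-multiplier extension needed to handle possible degeneracy of $\phi\rtimes G$.
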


\begin{proof}
Using functoriality of $(A,\alpha)\mapsto A\rtimes_\alpha G$,
the lemma follows 
once we 
have verified
that
$\phi\rtimes G$ is $\direct\alpha-\direct\beta$ equivariant,
and we check this on generators:
for $a\in A$ we have
\begin{align*}
\direct\beta\circ (\phi\rtimes G)\circ i_A(a)
&=\direct\beta\circ i_B\circ \phi(a)
\\&=i_B\circ \phi(a)\xm 1
\\&=\bigl((\phi\rtimes G)\xm \id\bigr)\bigl(i_A(a)\xm 1\bigr)
\\&=\bigl((\phi\rtimes G)\xm \id\bigr)\circ \direct\alpha\circ i_A(a),
\end{align*}
and for $s\in G$ we have
\begin{align*}
\direct\beta\circ (\phi\rtimes G)\circ i_G^\alpha(s)
&=\direct\beta\circ i_G^\beta(s)
\\&=i_G^\beta(s)\xm s
\\&=\bigl((\phi\rtimes G)\xm \id\bigr)\bigl(i_G^\alpha(s)\xm s\bigr)
\\&=\bigl((\phi\rtimes G)\xm \id\bigr)\circ \direct\alpha\circ i_G(s).
\qedhere
\end{align*}
\end{proof}

It is important to note that
the functor in \lemref{dual R coaction functor}
produces the same $C^*$-algebras $A\rtimes_\alpha G$
and morphisms $\phi\rtimes G$
as for \coaction s
(more precisely, the morphisms involve the same homomorphisms).

\begin{defn}\label{invariant ideal}
Let $(A,\delta)$ be an R-coaction,
let $I$ be an ideal of $A$,
and let $\pi\:A\to A/I$ be the quotient map.
We say that $I$ is \emph{$\delta$-invariant} if 
\begin{equation}\label{contained}
I\subset \ker(\pi\xm \id)\circ\delta.
\end{equation}
\end{defn}

\begin{rem}
Note that it would be sensible to consider another, stronger, notion of $\delta$-invariant ideal $I$ of $A$,
namely that $\delta$ should restrict to give 
an R-coaction
on $I$.
The existence of two different types of invariant ideals
will cause no confusion; in this paper, for R-coactions we will always mean invariance in the sense of \defnref{invariant ideal}.

We do not know (and fortunately do not need to know) whether an ideal is $\delta$-invariant if and only if it is $\delta\sify$-invariant.
\end{rem}

The following is a well-known fact in the context of \coaction s, and we shall need it for R-coactions.

\begin{lem}\label{lem invariant ideal}
Let $(A,\delta)$ be an R-coaction.
Then 
an ideal $I$ of $A$ is $\delta$-invariant
if and only if
there is a 
R-coaction $\epsilon$ on $A/I$ 
such that the quotient map $\pi$ is
$\delta-\epsilon$ equivariant.
Moreover, in this case we actually have equality in \eqref{contained}.
\end{lem}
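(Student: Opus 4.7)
The plan is to adapt the argument familiar from standard coactions, with extra care taken in the manipulation of strict extensions to multiplier algebras of maximal tensor products. For the ``if'' direction, suppose such an $\epsilon$ exists. The equivariance condition reads $\epsilon\circ\pi=\bar{\pi\xm\id}\circ\delta$; since $\epsilon$ is injective by \lemref{lem injective}, I immediately obtain
\[
\ker\bigl((\pi\xm\id)\circ\delta\bigr)=\ker(\epsilon\circ\pi)=\pi^{-1}(0)=I,
\]
which gives both $\delta$-invariance and the asserted equality in \eqref{contained}.

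For the ``only if'' direction, assume $I\subset\ker(\pi\xm\id)\circ\delta$. Then $\bar{\pi\xm\id}\circ\delta\:A\to\wilde M((A/I)\xm C^*(G))$ vanishes on $I$, so by the universal property of the quotient there is a unique homomorphism $\epsilon\:A/I\to\wilde M((A/I)\xm C^*(G))$ with $\epsilon\circ\pi=\bar{\pi\xm\id}\circ\delta$; equivariance of $\pi$ is then automatic. It remains to check the two conditions of \defnref{R coaction} for $\epsilon$, from which injectivity of $\epsilon$ will follow by \lemref{lem injective}. For coaction-nondegeneracy, I would use that $\pi\xm\id$ is surjective and that coaction-nondegeneracy of $\delta$ gives
\[
\clspn\{\epsilon(A/I)(1\xm C^*(G))\}=\clspn\{(\pi\xm\id)[\delta(A)(1\xm C^*(G))]\}=(A/I)\xm C^*(G),
\]
the final equality because a surjective $*$-homomorphism takes total sets to total sets. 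For the coaction identity, since $\pi$ is surjective it suffices to verify $(\epsilon\xm\id)\circ\epsilon\circ\pi=(\id\xm\deltagr)\circ\epsilon\circ\pi$; substituting $\epsilon\circ\pi=\bar{\pi\xm\id}\circ\delta$ on both sides, the required equation reduces to the coaction identity $(\delta\xm\id)\circ\delta=(\id\xm\deltagr)\circ\delta$ followed by $\bar{\pi\xm\id\xm\id}$.

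The main obstacle is the bookkeeping of compositions of strict extensions in the maximal-tensor setting; specifically, one needs the identities
\[
\bar{(g\circ f)\xm\id}=\bar{g\xm\id}\circ\bar{f\xm\id}
\midtext{and}
\bar{\pi\xm\id\xm\id}\circ(\delta\xm\id)=\bar{(\bar{\pi\xm\id}\circ\delta)\xm\id},
\]
which are transparent for minimal tensor products but demand justification here. The tool is the uniqueness clause of \lemref{extend}: in each identity both sides are strictly continuous homomorphisms that agree on elementary tensors $a\xm c$, and so must coincide on the appropriate multiplier algebra, using the nondegeneracy of $\pi\xm\id$ (it is surjective) and of the other tensor factors.
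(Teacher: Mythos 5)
Your proposal is correct and follows essentially the same route as the paper: the containment \eqref{contained} is precisely what allows $(\pi\xm\id)\circ\delta$ to factor through the quotient, the coaction identity and coaction-nondegeneracy of $\epsilon$ are then verified by pushing the corresponding properties of $\delta$ through the surjection $\pi\xm\id$, and \lemref{lem injective} yields injectivity of $\epsilon$, which gives the equality $I=\ker(\pi\xm\id)\circ\delta$. The paper compresses the verification into ``routine,'' whereas you spell out the strict-extension bookkeeping; both arguments are the same in substance.
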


\begin{proof}
Similarly to 
\coaction s, the condition \eqref{contained}
is exactly what is needed for there to exist a homomorphism $\epsilon$ making the diagram
\[
\xymatrix{
A \ar[r]^-\delta \ar[d]_\pi
&M(A\xm C^*(G)) \ar[d]^{\pi\xm \id}
\\
A/I \ar@{-->}[r]_-\epsilon
&M(A/I\xm C^*(G))
}
\]
commute,
in which case it is routine to verify
that $\epsilon$ satisfies
(the versions for R-coactions of)
the coaction identity and coaction-non\-de\-gen\-er\-acy,
and hence by \lemref{lem injective}
is
an R-coaction.

In particular,
$\epsilon$
is
injective,
so
\[
I=\ker\pi=\ker(\pi\xm\id)\circ\delta.
\qedhere
\]
\end{proof}

\section{R-ification}\label{rification}

Now we will construct a functor,
which we will call R-ification,
from maximal \coaction s to R-coactions.
We start with a maximal \coaction\ $(A,\delta)$.
Even though $\delta$ itself is maximal, we need to first apply (a slightly embellished version of) the maximalization functor to it in order to have sufficient data to make a functorial choice of R-coaction.

In \secref{fischer construction}
we reviewed (for \coaction s) the details of the maximalization functor
as described in
\cite{fischer} and
\cite[Sections~3--4]{koqmaximal}.
In \secref{r-co sec} we began the process of adapting some of this to the context of R-coactions,
and we continue this process in the current section.

Among other things, we will need to adapt to R-coactions a bit of the theory of cocycles.
But first, we need an appropriate version of
the abstract \dco s.

\begin{defn}
Fix a $C^*$-algebra $D$.
A \emph{\dcor} is a triple $(A,\delta,\pi)$,
where $(A,\delta)$ is an R-coaction and $(A,\pi)$ is a 
\dalg\
such that
\[
\delta\circ\pi=\pi\xm 1.
\]
\end{defn}

Note that the categories of R-coactions and of \dalg s combine immediately to form a \emph{category of \dcor s},
where a morphism $\phi\:(A,\delta,\pi)\to (B,\epsilon,\psi)$ is just a morphism
$\phi\:(A,\delta)\to (B,\epsilon)$ of R-coactions
that is also $\pi-\psi$ compatible.

\begin{defn}\label{R-cocycle}
A \emph{cocycle} for an R-coaction $(A,\delta)$,
also called a \emph{$\delta$-cocycle},
is a unitary $U\in M(A\xm C^*(G))$ such that
\begin{enumerate}
\item
$(\id\xm\deltagr)(U)=(U\xm 1)(\delta\xm\id)(U)$
and

\item
$\clspn\{(1\xm C^*(G))U\delta(A)\}=A\xm C^*(G)$.
\end{enumerate}
\end{defn}

\begin{lem}
If $(A,\delta)$ is an R-coaction and
$U$ is a $\delta$-cocycle, then $\epsilon=\ad U\circ\delta$ is also an R-coaction on $A$.
\end{lem}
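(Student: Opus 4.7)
The plan is to invoke Lemma \ref{lem injective} to reduce the verification to just two properties of $\epsilon = \ad U \circ \delta$, viewed as a homomorphism $A \to M(A \xm C^*(G))$: (a) the coaction identity, and (b) coaction-nondegeneracy. Injectivity, nondegeneracy as a homomorphism into the multiplier algebra, and the fact that $\epsilon$ actually lands in $\wilde M(A \xm C^*(G))$ will then follow automatically.

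I would handle coaction-nondegeneracy first, because nondegeneracy of $\epsilon$ will be needed to make sense of $\epsilon \xm \id$ on multipliers in the next step. Starting from Definition \ref{R-cocycle}(ii), taking adjoints and using that $\delta(A)$ is self-adjoint yields $\clspn\{\delta(A)U^*(1\xm C^*(G))\} = A\xm C^*(G)$. Multiplying on the left by the unitary $U\in M(A\xm C^*(G))$ (which preserves the closed linear span and the right-hand side) gives
\[
\clspn\{U\delta(A)U^*(1\xm C^*(G))\} = \clspn\{\epsilon(A)(1\xm C^*(G))\} = A\xm C^*(G).
\]
In particular $\epsilon$ is nondegenerate, so $\epsilon\xm\id$ exists in the paper's conventional sense and extends canonically to multipliers.

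For the coaction identity, the key observation is that on elementary tensors $(\epsilon\xm\id)(a\xm c) = (U\xm 1)(\delta(a)\xm c)(U^*\xm 1)$, so by linearity, continuity, and extension to multipliers,
\[
\epsilon\xm\id = \ad(U\xm 1) \circ (\delta\xm\id).
\]
Applying this to $\epsilon(a) = U\delta(a)U^*$ and expanding,
\[
(\epsilon\xm\id)(\epsilon(a)) = (U\xm 1)(\delta\xm\id)(U)\,(\delta\xm\id)(\delta(a))\,(\delta\xm\id)(U^*)(U^*\xm 1).
\]
The cocycle identity Definition \ref{R-cocycle}(i) rewrites $(U\xm 1)(\delta\xm\id)(U) = (\id\xm\deltagr)(U)$, and its adjoint handles the symmetric factor on the right; the coaction identity for $\delta$ rewrites $(\delta\xm\id)\circ\delta = (\id\xm\deltagr)\circ\delta$. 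Collapsing gives $(\id\xm\deltagr)(U\delta(a)U^*) = (\id\xm\deltagr)(\epsilon(a))$, as required.

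The main subtlety, which is really the only thing that could go wrong, is the handling of multipliers in the maximal tensor product. Since $M(A\xm C^*(G))\xm M(C^*(G))$ need not embed in $M(A\xm C^*(G)\xm C^*(G))$, the identity $\epsilon\xm\id = \ad(U\xm 1)\circ(\delta\xm\id)$ has to be justified via the paper's convention (the canonical nondegenerate extension), not through a naive factorization; the factor $(U\xm 1)$ is understood as an element of $M(A\xm C^*(G)\xm C^*(G))$ in this sense. Once this convention is in place, the computation is purely algebraic manipulation of the cocycle and coaction identities.
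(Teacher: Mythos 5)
Your proposal is correct and follows essentially the same route as the paper: reduce to the two hypotheses of Lemma~\ref{lem injective}, verify the coaction identity by conjugating $(\delta\xm\id)\circ\delta$ with $(U\xm 1)(\delta\xm\id)(U)=(\id\xm\deltagr)(U)$, and deduce coaction-nondegeneracy of $\epsilon$ from Definition~\ref{R-cocycle}(ii). If anything, your derivation of nondegeneracy (adjoint of condition (ii), then left-multiply by the unitary $U$) is written out more carefully than the paper's one-line $\ad U$ manipulation, and your remark about interpreting $U\xm 1$ in $M(A\xm C^*(G)\xm C^*(G))$ matches the paper's stated convention.
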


\begin{proof}
The proof is a routine modification of the argument for \coaction s;
we include it here for completeness.
We will appeal to \lemref{lem injective}:
certainly
\[
\epsilon\:A\to M(A\xm C^*(G))
\]
is a homomorphism.
We check hypotheses~(i)--(ii) in \lemref{lem injective}:
\begin{align*}
(\epsilon\xm\id)\circ\epsilon
&=(\ad U\circ\delta\xm\id)\circ \ad U\circ\delta
\\&=\ad (U\xm 1)\circ(\delta\xm\id)\circ \ad U\circ\delta
\\&=\ad (U\xm 1)\circ \ad (\delta\xm\id)(U)\circ(\delta\xm\id)\circ\delta
\\&=\ad \bigl((U\xm 1)(\delta\xm\id)(U)\bigr)\circ(\id\xm\deltag^R)\circ\delta
\\&=\ad (\id\xm\deltag^R)(U)\circ(\id\xm\deltagr)\circ\delta
\\&=(\id\xm\deltagr)\circ \ad U\circ\delta
\\&=(\id\xm\deltagr)\circ \epsilon,
\end{align*}
and
\begin{align*}
\clspn \bigl\{\epsilon(A)(1\xm C^*(G))\bigr\}
&=\clspn \bigl\{\ad U\circ\delta(A)(1\xm C^*(G))\bigr\}
\\&=\ad U\Bigl(\clspn \bigl\{\delta(A)(1\xm C^*(G))\bigr\}\Bigr)
\\&=\ad U(A\xm C^*(G))
\\&=A\xm C^*(G).
\qedhere
\end{align*}
\end{proof}

\begin{defn}
With the above notation, we say that $\epsilon$ is \emph{exterior equivalent} to $\delta$,
and also we say that it is a \emph{perturbation} of $\delta$.
\end{defn}

\begin{lem}
Exterior equivalence
is an equivalence relation on 
R-coactions.
\end{lem}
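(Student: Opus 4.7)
The plan is to verify the three standard conditions, taking inspiration from the corresponding argument for standard \coaction s recalled in \secref{prelim}, but being careful to use the R-cocycle definition (\defnref{R-cocycle}) throughout. Throughout, given a $\delta$-cocycle $U$ we write $\epsilon = \ad U \circ \delta$, and similarly for any other perturbation.

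For \emph{reflexivity}, the unit $U = 1 \in M(A \xm C^*(G))$ is a $\delta$-cocycle, since both sides of \defnref{R-cocycle}(i) collapse to $1$, and the nondegeneracy condition (ii) reduces to the (adjoint of the) coaction-nondegeneracy of $\delta$. The perturbation by $U = 1$ is again $\delta$.

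For \emph{symmetry}, suppose $U$ is a $\delta$-cocycle and let $\epsilon = \ad U\circ \delta$; we will show that $U^*$ is an $\epsilon$-cocycle with $\ad U^*\circ \epsilon = \delta$. The last equality is automatic, and $U^*$ is unitary. The cocycle identity for $U^*$ with respect to $\epsilon$ is obtained by taking adjoints of the cocycle identity for $U$ and then rewriting $(\epsilon\xm\id)(U^*) = (U\xm 1)(\delta\xm\id)(U^*)(U^*\xm 1)$; the factors of $U^*\xm 1$ and $U\xm 1$ telescope, and one checks the identity
\[
(\id\xm\deltagr)(U^*) = (\delta\xm\id)(U^*)(U^*\xm 1) = (U^*\xm 1)(\epsilon\xm\id)(U^*).
\]
For the nondegeneracy, since $U\delta(A) = \epsilon(A)U$, we have $U^*\epsilon(A) = \delta(A)U^*$; thus
\[
\clspn\{(1\xm C^*(G))U^*\epsilon(A)\} = \clspn\{(1\xm C^*(G))\delta(A)\}U^* = A\xm C^*(G),
\]
using unitarity of $U^*$ and the fact that coaction-nondegeneracy of $\delta$ is equivalent, by taking adjoints, to $\clspn\{(1\xm C^*(G))\delta(A)\} = A\xm C^*(G)$.

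For \emph{transitivity}, suppose $U$ is a $\delta$-cocycle with perturbation $\epsilon$, and $V$ is an $\epsilon$-cocycle with perturbation $\zeta$. The natural candidate is the product $VU$; clearly $\zeta = \ad(VU)\circ\delta$, and $VU$ is unitary. The key computation is the cocycle identity for $VU$ with respect to $\delta$: starting from
\[
(\id\xm\deltagr)(VU) = (\id\xm\deltagr)(V)(\id\xm\deltagr)(U) = (V\xm 1)(\epsilon\xm\id)(V)(U\xm 1)(\delta\xm\id)(U),
\]
the crucial intermediate step is the relation
\[
(\epsilon\xm\id)(V)(U\xm 1) = (U\xm 1)(\delta\xm\id)(V),
\]
which follows from $\epsilon = \ad U\circ\delta$ and the unitarity of $U\xm 1$. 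Substituting this in and collecting terms yields $(VU\xm 1)(\delta\xm\id)(VU)$, as required. Finally, for the nondegeneracy of $VU$ as a $\delta$-cocycle, observe that $(VU)\delta(A) = \zeta(A)(VU)$, so that
\[
\clspn\{(1\xm C^*(G))(VU)\delta(A)\} = \clspn\{(1\xm C^*(G))\zeta(A)\}(VU) = A\xm C^*(G),
\]
where the last equality uses that $\zeta$, being the perturbation of the R-coaction $\epsilon$ by the $\epsilon$-cocycle $V$, is itself an R-coaction by the preceding lemma, hence coaction-nondegenerate.

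The main obstacle is the transitivity step, and specifically the manipulation swapping $(\epsilon \xm \id)(V)$ past the factor $U \xm 1$; this is where we must translate between the two different reference coactions $\delta$ and $\epsilon$. Once this identity is isolated, everything else is a routine rearrangement that parallels the standard \coaction\ case, with the only real novelty being that we repeatedly invoke \lemref{lem injective}/the preceding perturbation lemma in place of the Raeburn-style automatic-nondegeneracy result cited for standard coactions.
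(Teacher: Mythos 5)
Your proposal is correct and follows the same route as the paper: reflexivity via the trivial cocycle $1$, symmetry via $U^*$ being an $\ad U\circ\delta$-cocycle, and transitivity via $VU$ being a $\delta$-cocycle; the paper merely asserts these as routine computations, which you have carried out correctly (including the nondegeneracy checks, where your appeal to the perturbation lemma for $\zeta$ is exactly the right justification).
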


\begin{proof}
Trivially every R-coaction is exterior equivalent to itself (using the cocycle 1 (the identity element of the appropriate multiplier algebra).
Symmetry and transitivity follow from
the following facts:
given an R-coaction $(A,\delta)$,
if $U$ is a $\delta$-cocycle,
then $U^*$ is an $\ad U\circ\delta$-cocycle,
and if $W$ is an $\ad U\circ\delta$-cocycle,
then $WU$ is a $\delta$-cocycle.
Both of these follow from routine computations,
just as for \coaction s.
\end{proof}

Note that condition (ii) in \defnref{R-cocycle} guarantees that $\epsilon$ is coaction-nondegenerate.
This is one of those places where we give only a limited development of the theory, to avoid tedious, irrelevant discussions:
for \coaction s, the usual definition of cocycle has the following condition instead of (ii):
\begin{enumerate}
\item[(ii)$'$]
$\ad U\circ\delta(A)(1\xm C^*(G))\subset A\xm C^*(G)$,
\end{enumerate}
and then it is remarked that the \coaction\ $\ad U\circ\delta$ is automatically nondegenerate because $\delta$ is.
However, we do not want to spend the effort trying to prove the corresponding fact for R-coactions, so we assume (ii) above, which implies (ii)$'$.
This seems to us to be a reasonable compromise, since we will not need to know very many properties of cocycles for R-coactions.

\begin{lem}\label{nd morphism}
Let $(A,\delta)$ and $(B,\epsilon)$ be R-coactions,
let $\phi\:A\to M(B)$ be a nondegenerate $\delta-\epsilon$ equivariant homomorphism,
and let $U$ be a $\delta$-cocycle.
Then $(\phi\xm\id)(U)$ is an $\epsilon$-cocycle.
\end{lem}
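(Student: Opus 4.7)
The plan is to verify the two cocycle conditions of \defnref{R-cocycle} for $V:=(\phi\xm\id)(U)$, which lies in $M(B\xm C^*(G))$ and is unitary because $\phi\xm\id$ is nondegenerate.

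Condition~(i) is routine: apply the nondegenerate extension of $\phi\xm\id\xm\id$ to both sides of the cocycle identity $(\id\xm\deltagr)(U)=(U\xm 1)(\delta\xm\id)(U)$, and convert the occurrence of $(\phi\xm\id)\circ\delta$ on the right-hand side into $\epsilon\circ\phi$ using equivariance of $\phi$.

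For condition~(ii), namely $\clspn\{(1\xm C^*(G))V\epsilon(B)\}=B\xm C^*(G)$, the key observation is
\[
(\phi\xm\id)\bigl((1\xm c)U\delta(a)\bigr)=(1\xm c)V\epsilon(\phi(a)),\qquad a\in A,\ c\in C^*(G),
\]
which follows from equivariance together with $(\phi\xm\id)(1\xm c)=1\xm c$. Set $X:=(\phi\xm\id)(A\xm C^*(G))\subset M(B\xm C^*(G))$; this is closed since images of $C^*$-homomorphisms are closed. Nondegeneracy of $\phi$ gives $\clspn\{\phi(A)B\}=B$, hence $\clspn\{\epsilon(\phi(A))\epsilon(B)\}=\epsilon(B)$, and from this one shows
\[
\clspn\{(1\xm C^*(G))V\epsilon(B)\}=\clspn\{(1\xm C^*(G))V\epsilon(\phi(A))\epsilon(B)\}=\clspn\{X\cdot\epsilon(B)\},
\]
where the second equality uses the key identity together with cocycle condition~(ii) for $U$ (which makes $(\phi\xm\id)((1\xm C^*(G))U\delta(A))$ dense in $X$).

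It then remains to show $\clspn\{X\cdot\epsilon(B)\}=B\xm C^*(G)$. The inclusion $\subset$ is straightforward: on elementary tensors, $(\phi(a)\xm c)\epsilon(b)=(\phi(a)\xm 1)\cdot(1\xm c)\epsilon(b)\in B\xm C^*(G)$ since $(1\xm c)\epsilon(b)\in B\xm C^*(G)$ by the $\wilde M$-property of $\epsilon$ and $M(B\xm C^*(G))\cdot(B\xm C^*(G))\subset B\xm C^*(G)$; the inclusion extends to $X\cdot\epsilon(B)$ by continuity. The reverse inclusion is the main obstacle: nondegeneracy of $\phi\xm\id$ only yields $\clspn\{X\cdot(B\xm C^*(G))\}=B\xm C^*(G)$, with $B\xm C^*(G)$ in place of $\epsilon(B)$. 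To bridge the gap, I would use the adjoint form of coaction-nondegeneracy of $\epsilon$, namely $\clspn\{(1\xm C^*(G))\epsilon(B)\}=B\xm C^*(G)$, to approximate any $y\in B\xm C^*(G)$ by sums $\sum(1\xm c_j)\epsilon(b_j)$. For $x\in X$, this expresses $xy$ as a norm limit of $\sum x(1\xm c_j)\epsilon(b_j)$; the crucial absorption $X\cdot(1\xm C^*(G))\subset X$ --- which holds because $y_0(1\xm c)\in A\xm C^*(G)$ for $y_0\in A\xm C^*(G)$ and $(\phi\xm\id)$ is a homomorphism --- places each summand in $X\cdot\epsilon(B)$, giving $B\xm C^*(G)\subset\clspn\{X\cdot\epsilon(B)\}$.
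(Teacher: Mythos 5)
Your argument is correct and follows essentially the same route as the paper's proof: verify condition~(i) by applying $\phi\xm\id\xm\id$ to the cocycle identity and invoking equivariance, and verify condition~(ii) by rewriting $\epsilon(B)$ as $\clspn\{\epsilon(\phi(A))\epsilon(B)\}$, pulling the product $(1\xm C^*(G))U\delta(A)$ inside $\phi\xm\id$, applying cocycle-nondegeneracy of $U$, and finishing with coaction-nondegeneracy of $\epsilon$ together with nondegeneracy of $\phi$. The only difference is cosmetic: the paper factors elementary tensors $\phi(a)\xm c=(\phi(a)\xm 1)(1\xm c)$ in the last step, whereas you package the same ingredients as nondegeneracy of $\phi\xm\id$ plus the absorption $X(1\xm C^*(G))\subset X$.
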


\begin{proof}
The proof is a routine modification of the argument for \coaction s;
we give it for completeness:
\begin{align*}
&(\id\xm\deltagr)\bigl((\phi\xm\id)(U)\bigr)
\\&\quad=(\phi\xm\id\xm\id)\bigl((\id\xm\deltagr)(U)\bigr)
\\&\quad=(\phi\xm\id\xm\id)\bigl((U\xm 1)(\delta\xm\id)(U)\bigr)
\\&\quad=\bigl((\phi\xm\id)(U)\xm 1\bigr)\bigl((\phi\xm\id)\circ\delta\xm\id\bigr)(U)
\\&\quad=\bigl((\phi\xm\id)(U)\xm 1\bigr)(\epsilon\xm\id)\bigl((\phi\xm\id)(U)\bigr),
\end{align*}
and
\begin{align*}
&\clspn\bigl\{(1\xm C^*(G))(\phi\xm\id)(U)\epsilon(B)\bigr\}
\\&\quad=\clspn\bigl\{(1\xm C^*(G))(\phi\xm\id)(U)\epsilon\bigl(\phi(A)B\bigr)\bigr\}
\\&\quad=\clspn\bigl\{(1\xm C^*(G))(\phi\xm\id)(U)(\phi\xm\id)\circ\delta(A)\epsilon(B)\bigr\}
\\&\quad=\clspn\bigl\{(\phi\xm\id)\Bigl(\bigl(1\xm C^*(G)\bigr)U\delta(A)\Bigr)\epsilon(B)\bigr\}
\\&\quad=\clspn\bigl\{(\phi\xm\id)\Bigl(
\clspn\bigl\{\bigl(1\xm C^*(G)\bigr)U\delta(A)\bigr\}
\Bigr)\epsilon(B)\bigr\}
\\&\quad=\clspn\bigl\{(\phi\xm\id)\Bigl(
\bigl(A\xm C^*(G)\bigr)
\Bigr)\epsilon(B)\bigr\}
\\&\quad=\clspn\bigl\{\bigl(\phi(A)\xm C^*(G)\bigr)\epsilon(B)\bigr\}
\\&\quad=\clspn\bigl\{\bigl(\phi(A)\xm 1\bigr)\bigl(1\xm C^*(G)\bigr)\epsilon(B)\bigr\}
\\&\quad=\clspn\bigl\{\bigl(\phi(A)\xm 1\bigr)
\clspn\bigl\{\bigl(1\xm C^*(G)\bigr)\epsilon(B)\bigr\}
\bigr\}
\\&\quad=\clspn\bigl\{\bigl(\phi(A)\xm 1\bigr)
\bigl(B\xm C^*(G)\bigr)
\bigr\}
\\&\quad=B\xm C^*(G).
\qedhere
\end{align*}
\end{proof}

The following is a version of
\lemref{fn coc} for R-coactions:

\begin{cor}\label{fn coc r}
If $(A,\delta,\mu)$ is a \cgcor,
then
$W=(i_A\circ\mu\xm\id)(w_G)$
is 
a $\delta$-cocycle.
\end{cor}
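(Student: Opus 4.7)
The plan is to reduce the corollary to \lemref{nd morphism}, by first establishing that $w_G$ itself is a cocycle for the trivial R-coaction $\triv$ on $C_0(G)$ defined by $f\mapsto f\xm 1$ (which is immediately seen to be an R-coaction). The defining identity $\delta\circ\mu=\mu\xm 1$ of a \cgcor\ is precisely the statement that the nondegenerate homomorphism $\mu\:C_0(G)\to M(A)$ is $\triv-\delta$ equivariant in the sense of \defnref{equivariant}. Once we know $w_G$ is a $\triv$-cocycle, \lemref{nd morphism} will then immediately yield that $W=(\mu\xm\id)(w_G)$ is a $\delta$-cocycle.

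To verify that $w_G$ is a $\triv$-cocycle in the sense of \defnref{R-cocycle}, I would check the two conditions separately. For condition (i), since $\triv(f)=f\xm 1$ we have $(\triv\xm\id)(w_G)=(w_G)_{13}$, so the right-hand side of (i) becomes $(w_G)_{12}(w_G)_{13}$. The left-hand side $(\id\xm\deltagr)(w_G)=(w_G)_{12}(w_G)_{13}$ is the maximal-tensor-product analog of the Nakagami-Takesaki identity recalled in \secref{fischer construction}; it follows from the standard version because $C_0(G)$ is nuclear, so $C_0(G)\xm C^*(G)$ and $C_0(G)\xt C^*(G)$ coincide, and after evaluating the strictly continuous map $w_G\:s\mapsto s$ the identity reduces to $\deltagr(s)=s\xm s=(s\xm 1)(1\xm s)$.

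For condition (ii) I need $\clspn\{(1\xm C^*(G))w_G(C_0(G)\xm 1)\}=C_0(G)\xm C^*(G)$. Commutativity of $C_0(G)$ yields the key identity $w_G(f\xm 1)=(f\xm 1)w_G$ for $f\in C_0(G)$ (both sides correspond to the function $s\mapsto f(s)s\in M(C^*(G))$). Consequently, $(1\xm c)w_G(f\xm 1)=(f\xm c)w_G$, so the span in question equals $(C_0(G)\xm C^*(G))w_G$, which is all of $C_0(G)\xm C^*(G)$ because $w_G$ is a unitary element of $M(C_0(G)\xm C^*(G))$ and hence acts by right multiplication as a bijection on $C_0(G)\xm C^*(G)$.

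The only mildly subtle step is the commutation identity $w_G(f\xm 1)=(f\xm 1)w_G$, which must be checked in $M(C_0(G)\xm C^*(G))$ using the definition of $w_G$ via the canonical embedding $G\hookrightarrow M(C^*(G))$; once that is in hand, every other step is purely formal and parallel to the well-established standard case, now transported to the R-setting through \lemref{nd morphism}.
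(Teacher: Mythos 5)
Your proposal is correct and follows essentially the same route as the paper: the paper's proof also observes that $\triv$ on $C_0(G)$ is an R-coaction, that $w_G$ is a $\triv$-cocycle, and that the identity $\delta\circ\mu=\mu\xm 1$ says $\mu$ is $\triv$--$\delta$ equivariant, and then appeals to \lemref{nd morphism}. You simply spell out the verification that $w_G$ satisfies the two conditions of \defnref{R-cocycle}, which the paper leaves as a ``routine adaptation.''
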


\begin{proof}
This is a routine adaptation of the argument for 
\propref{fn coc}, replacing minimal tensor products by maximal ones.
Note that the trivial coaction $\triv$ on $C_0(G)$ may be regarded as an R-coaction,
$w_G$ is still a cocycle for $\triv$,
and $\mu$ is $\triv-\delta$ equivariant.
Thus we can appeal to \lemref{nd morphism}.
\end{proof}

\begin{defn}
As we did for \coaction s, in \corref{fn coc r} we call $W$ the \emph{associated cocycle}.
\end{defn}

\begin{lem}\label{mor coc r}
If $\phi\:(A,\delta,\mu)\to (B,\epsilon,\nu)$ is a morphism of \cgcor s,
with associated cocycles $W$ and $U$, respectively,
then $\phi$ is also $\ad W\circ\delta-\ad U\circ\epsilon$ equivariant.
\end{lem}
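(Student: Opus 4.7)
The strategy is to mimic, essentially verbatim, the proof of the standard-coaction analogue \lemref{mor coc}, substituting the maximal tensor product $\xm$ for the minimal $\xt$ throughout. The claim unpacks to the assertion that for every $a\in A$,
\[
U\epsilon(\phi(a))U^*=(\bar{\phi\xm\id})\bigl(W\delta(a)W^*\bigr).
\]
The $\delta-\epsilon$ equivariance of $\phi$ rewrites $(\bar{\phi\xm\id})\circ\delta$ as $\epsilon\circ\phi$, so it suffices to arrange that conjugation by $W$ inside $\bar{\phi\xm\id}$ becomes conjugation by $U$ outside. The cleanest packaging is to prove the intertwining identities
\[
(\bar{\phi\xm\id})(Wm)=U(\bar{\phi\xm\id})(m)\midtext{and}(\bar{\phi\xm\id})(mW^*)=(\bar{\phi\xm\id})(m)U^*
\]
for $m$ ranging over a sufficient collection of elements of $\wilde M(A\xm C^*(G))$, in particular for $m=\delta(a)W^*$ and $m=\delta(a)$.

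To prove the intertwining, I would use \lemref{extend} (with $D=C^*(G)$): both sides of each identity are determined by their right-products with $1\xm c$ for $c\in C^*(G)$, since such products land in $A\xm C^*(G)$ where $\phi\xm\id$ itself can be applied directly. The verification then reduces to
\[
(\phi\xm\id)(Wx)=U(\phi\xm\id)(x)\righttext{for} x\in A\xm C^*(G),
\]
which I would check by approximating $w_G$: for an approximate unit $(e_\alpha)$ of $C_0(G)$, each $(e_\alpha\xm 1)w_G$ lies in $C_0(G)\xm C^*(G)$ (the function $s\mapsto e_\alpha(s)s$ is compactly supported into $C^*(G)$), so $(\mu\xm\id)$ carries it into an element acting on $x$ through finite sums of elementary tensors $\mu(f)a\xm c$. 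On each such summand the $\mu-\nu$ compatibility $\phi(\mu(f)a)=\nu(f)\phi(a)$ converts $\phi\xm\id$ into the corresponding expression built from $\nu$, yielding $(\nu(e_\alpha)\xm 1)U(\phi\xm\id)(x)$ in the limit; cancelling the approximate units produces the desired identity. The second intertwining identity follows either by symmetry or by taking adjoints.

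With the intertwining in hand, applying it first with $m=\delta(a)W^*$ on the left and then with $m=\delta(a)$ on the right gives
\[
(\bar{\phi\xm\id})\bigl(W\delta(a)W^*\bigr)=U(\bar{\phi\xm\id})\bigl(\delta(a)W^*\bigr)=U(\bar{\phi\xm\id})(\delta(a))U^*=U\epsilon(\phi(a))U^*,
\]
completing the argument. The main obstacle will be the careful handling of strict limits in the face of the possible degeneracy of $\phi$: because $\phi$ need not be nondegenerate, we cannot apply $\phi\xm\id$ directly to the multiplier $W$, and must route everything through products of the form $Wx(1\xm c)$ with $x\in A\xm C^*(G)$ and $c\in C^*(G)$. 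The interplay between the $C_0(G)$-structure carried by $\mu$ and $\nu$ (which supplies the compatibility needed to convert $W$ into $U$) and the $C^*(G)$-multiplier framework used to extend $\phi\xm\id$ to $\wilde M(A\xm C^*(G))$ via \lemref{extend} is the only real subtlety; the underlying calculation runs parallel to the proof of the standard-coaction version \cite[Lemma~3.6]{klqfunctor}.
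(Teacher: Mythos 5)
Your overall route is the one the paper takes: reduce the equivariance to an intertwining identity of the form $(\bar{\phi\xm\id})(Wm)=U\,(\bar{\phi\xm\id})(m)$, prove it first on $A\xm C^*(G)$ via elementary tensors and the $\mu-\nu$ compatibility, and then pass to $\wilde M(A\xm C^*(G))$ by multiplying on the right by $1\xm c$. (One framing remark: the paper explicitly warns that the proof of \lemref{mor coc} for standard coactions does \emph{not} adapt verbatim, since it uses the embedding $D\xt C^*(G)\subset B\xt C^*(G)$ for $D\subset B$, which fails for $\xm$; what you actually write is a direct argument rather than a transcription, which is the right instinct.)

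There is, however, a genuine error in your approximation of $w_G$. You assert that $(e_\alpha\xm 1)w_G$ lies in $C_0(G)\xm C^*(G)$ because ``the function $s\mapsto e_\alpha(s)s$ is compactly supported into $C^*(G)$.'' This is false unless $G$ is discrete: the canonical image of a group element $s$ lies in $M(C^*(G))$, not in $C^*(G)$ (for $G=\mathbb R$, the element $s$ corresponds to the character $\xi\mapsto e^{is\xi}$, which lies in $C_b(\what{\mathbb R})=M(C^*(\mathbb R))$ but not in $C_0(\what{\mathbb R})=C^*(\mathbb R)$). Hence $s\mapsto e_\alpha(s)s$ is a compactly supported function into $M(C^*(G))$, and $(e_\alpha\xm 1)w_G$ does not belong to $C_0(G)\xm C^*(G)\cong C_0(G,C^*(G))$; cutting $w_G$ down on the $C_0(G)$ leg alone never lands in the algebra. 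Your subsequent step --- expanding $(\mu\xm\id)\bigl((e_\alpha\xm 1)w_G\bigr)$ into finite sums of elementary tensors $\mu(f)a\xm c$ --- therefore has no starting point. The repair is exactly what the paper does: first prove $(\phi\xm\id)\bigl((\mu\xm\id)(x)y\bigr)=(\nu\xm\id)(x)(\phi\xm\id)(y)$ for \emph{all} $x\in C_0(G)\xm C^*(G)$ and $y\in A\xm C^*(G)$ (elementary tensors suffice), and then pass to arbitrary $x\in M(C_0(G)\xm C^*(G))$, in particular $x=w_G$, by taking \emph{any} net $(x_i)$ in $C_0(G)\xm C^*(G)$ converging strictly to $x$: since $\mu\xm\id$ and $\nu\xm\id$ are nondegenerate, their extensions to multipliers are strictly continuous, so $(\mu\xm\id)(x_i)y\to(\mu\xm\id)(x)y$ in norm for fixed $y$, and norm continuity of $\phi\xm\id$ finishes the limit. (Alternatively, cut $w_G$ down on \emph{both} legs: $(e_\alpha\xm c)w_G$ does lie in $C_0(G)\xm C^*(G)$, since $cs\in C^*(G)$.) With that change the rest of your argument goes through.
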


\begin{proof}
For \coaction s, this result is
\lemref{mor coc},
which in turn is
contained in \cite[Lemma~3.6]{klqfunctor2};
we cannot merely adapt the proof found there, however,
because it relies in part upon the following embedding property of minimal tensor products:
if $D\subset B$ then $D\xt C^*(G)\subset B\xt C^*(G)$,
which does not carry over to maximal tensor products.
So, we have to make a completely new proof.

We must show 
that
the diagram
\begin{equation}\label{cocycle morphism diagram}
\xymatrix@C+70pt{
A \ar[r]^-{\ad (\mu\xm\id)(w_G)\circ\delta} \ar[d]_\phi
&\wilde M(A\xm C^*(G)) \ar[d]^{\phi\xm\id}
\\
B \ar[r]_-{\ad (\nu\xm\id)(w_G)\circ\epsilon}
&\wilde M(B\xm C^*(G))
}
\end{equation}
commutes.
It suffices to show that
for all $a\in A$ we have
\begin{equation}\label{phi w}
(\phi\xm\id)\bigl((\mu\xm\id)(w_G)\delta(a)\bigr)=(\nu\xm\id)(w_G)\epsilon(\phi(a)),
\end{equation}
because we can take the adjoint of both sides, then multiply, and use the fact that every element of $A$ can be factored in the form $ab^*$ for some $a,b\in A$.

It further suffices to show that
\begin{equation}\label{abstract}
(\phi\xm\id)\bigl((\mu\xm\id)(x)y\bigr)=(\nu\xm\id)(x)(\phi\xm\id)(y)
\end{equation}
holds
for all
$x\in M(C_0(G)\xm C^*(G))$
and
$y\in \wilde M(A\xm C^*(G))$
and we will accomplish this in a sequence of steps.

We first show that \eqref{abstract} holds for all $x\in C_0(G)\xm C^*(G)$
(note that $\xm=\xt$ here)
and $y\in A\xm C^*(G)$:
by linearity, continuity, and density (for the norm topologies),
it suffices to consider elementary tensors
$x=f\xm c$ and $y=a\xm d$
for $f\in C_0(G)$, $a\in A$, and $c,d\in C^*(G)$:
\begin{align*}
&(\phi\xm\id)\bigl((\mu\xm\id)(f\xm c)(a\xm d)\bigr)
\\&\quad=(\phi\xm\id)\bigl(\mu(f)a\xm cd\bigr)
\\&\quad=\phi\bigl(\mu(f)a\bigr)\xm cd
\\&\quad=\nu(f)\phi(a)\xm cd
\\&\quad=(\nu(f)\xm c)\bigl(\phi(a)\xm d\bigr)
\\&\quad=(\nu\xm\id)(f\xm c)(\phi\xm\id)(a\xm d).
\end{align*}

We will now deduce that \eqref{abstract} holds for any 
$x\in M(C_0(G)\xm C^*(G))$
and $y\in A\xm C^*(G)$,
using norm continuity of $\phi\xm\id$:
choose a net $(x_i)$ in $C_0(G)\xm C^*(G)$ converging strictly to $x$ in $M(C_0(G)\xm C^*(G))$,
and let $y\in A\xm C^*(G)$.
Then $(\mu\xm\id)(x_i)\to (\mu\xm\id)(x)$ strictly in $M(A\xm C^*(G))$
because $\mu$ is nondegenerate,
and so
\[
(\mu\xm\id)(x_i)y\to (\mu\xm\id)(x) y\righttext{in norm.}
\]
Since $\phi\xm\id$ is norm continuous,
\[
(\phi\xm\id)\bigl((\mu\xm\id)(x_i)y\bigr)\to (\phi\xm\id)((\mu\xm\id)(x) y).
\]
For each $i$,
\[
(\phi\xm\id)\bigl((\mu\xm\id)(x_i)y\bigr)=(\nu\xm\id)(x_i)(\phi\xm\id)(y).
\]
Since $\nu\xm\id\:M(C_0(G)\xm C^*(G))\to M(B\xm C^*(G))$ is strictly continuous,
\[
(\nu\xm\id)(x_i)\to (\nu\xm\id)(x)\righttext{strictly.}
\]
Since $(\phi\xm\id)(y)\in B\xm C^*(G)$,
\[
(\nu\xm\id)(x_i)(\phi\xm\id)(y)
\to (\nu\xm\id)(x)(\phi\xm\id)(y)
\]
in norm.
Combining all the above, we get \eqref{abstract} for all 
$x\in M(C_0(G)\xm C^*(G))$
and $y\in A\xm C^*(G)$,
as desired.

Finally, we take an arbitrary
$x\in M(C_0(G)\xm C^*(G))$
and
$y\in \wilde M(A\xm C^*(G))$.
Letting $(e_i)$ be an approximate identity for $C^*(G)$,
we have $y(1\xm e_i)\in A\xm C^*(G)$ for all $i$,
so by the above we obtain
\begin{align*}
&(\phi\xm\id)\bigl((\mu\xm\id)(x)y\bigr)(1_{M(B)}\xm e_i)
\\&\quad=(\phi\xm\id)\bigl((\mu\xm\id)(x)y(1_{M(A)}\xm e_i)\bigr)
\\&\quad=(\nu\xm\id)(x)(\phi\xm\id)\bigl(y(1_{M(A)}\xm e_i)\bigr)
\\&\quad=(\nu\xm\id)(x)(\phi\xm\id)(y)(1_{M(B)}\xm e_i).
\end{align*}
Since 
$(1_{M(B)}\xm e_i)$
converges $C^*(G)$-strictly to 1 in 
$\wilde M(B\xm C^*(G))$,
we conclude that
\[
(\phi\xm\id)\bigl((\mu\xm\id)(x)y\bigr)
=(\nu\xm\id)(x)(\phi\xm\id)(y).
\qedhere
\]
\end{proof}

We need the following version for R-coactions of
\lemref{cpa functor},
and the proof is almost the same.

\begin{lem}\label{tilde alpha}
Let $(A,\alpha,\mu)$ be an equivariant action.
Then
\[
W=(i_A\circ\mu\xm\id)(w_G)
\]
is an $\direct\alpha$-cocycle.
Put
\[
\perturb\alpha=\ad W\circ\direct\alpha.
\]
Then
$(A\rtimes_\alpha G,\wilde\alpha,\mu\rtimes G)$ is a \kcor.

Moreover, if $\phi\:(A,\alpha,\mu)\to (B,\beta,\nu)$ is a morphism of equivariant actions,
then
the homomorphism
$\phi\rtimes G$
gives
a morphism
\[
(A\rtimes_\alpha G,\wilde\alpha,\mu\rtimes G)\to
(B\rtimes_\beta G,\wilde\beta,\nu\rtimes G)
\]
of \kcor s.

Further, the assignments
$(A,\alpha,\mu)\mapsto (A\rtimes_\alpha G,\wilde\alpha,\mu\rtimes G)$
and
$\phi\mapsto \phi\rtimes G$
give
a functor
$\cpar$
from equivariant actions to \kcor s.

Finally, we have
$\perturb\alpha\sify=\wilde\alpha$.
\end{lem}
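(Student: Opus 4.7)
The plan is to follow the same three-move template as the proof of \lemref{cpa functor}, with standard tools replaced by their R-analogs: \corref{fn coc r}, \lemref{dual R coaction functor}, and \lemref{mor coc r}. To address the cocycle assertion I would first observe that $(A\rtimes_\alpha G,\direct\alpha,i_A\circ\mu)$ is a \cgcor, because \defnref{dual R coaction} gives $\direct\alpha\circ i_A=i_A\xm 1$, and hence $\direct\alpha\circ (i_A\circ\mu)=(i_A\circ\mu)\xm 1$. Then \corref{fn coc r} immediately produces $W$ as an $\direct\alpha$-cocycle, so $\perturb\alpha=\ad W\circ\direct\alpha$ is automatically an R-coaction on $A\rtimes_\alpha G$.

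Next I would prove that $\perturb\alpha$ is trivial on $(\mu\rtimes G)(\KK)$ by the same device used in \lemref{cpa functor}: identify $C_0(G)\rtimes_\rt G=\KK$ with $(i_{C_0(G)},i_G)=(M,\rho)$, set $U=(M\xm\id)(w_G)$, and use the identity $w_G(1\xm s)=(\rt_s\xm\id)(w_G)$ to force $\ad U\circ \direct\rt$ to be the trivial R-coaction on $\KK$. The $\rt-\alpha$ equivariance of $\mu$ then gives, via \lemref{dual R coaction functor} applied in the nondegenerate setting, that $\mu\rtimes G\colon\KK\to M(A\rtimes_\alpha G)$ is $\direct\rt-\direct\alpha$ equivariant, and by construction $(\mu\rtimes G\xm\id)(U)=W$. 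Transporting the triviality of $\ad U\circ\direct\rt$ along $\mu\rtimes G$ then yields triviality of $\perturb\alpha$ on $(\mu\rtimes G)(\KK)$, so $(A\rtimes_\alpha G,\perturb\alpha,\mu\rtimes G)$ is a \kcor.

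For the morphism part I would combine Lemmas \ref{ac x pr comp} and \ref{x pr d comp} to get that $\phi\rtimes G$ is $(i_A\circ\mu)-(i_B\circ\nu)$ compatible, and therefore $(\mu\rtimes G)-(\nu\rtimes G)$ compatible (since $\mu\rtimes G$ is the integrated form of $(i_A\circ\mu,i_G^\alpha)$, and similarly for $\nu\rtimes G$); simultaneously \lemref{dual R coaction functor} gives $\direct\alpha-\direct\beta$ equivariance. Together these identify $\phi\rtimes G$ as a morphism of \cgcor s, at which point \lemref{mor coc r} promotes it to a $\perturb\alpha-\perturb\beta$ equivariant morphism of \kcor s. Functoriality of $\cpar$ is then inherited from the already-established functoriality of the action crossed product.

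For the final assertion $\perturb\alpha\sify=\wilde\alpha$, I would apply the naturality of the canonical surjection $\Upsilon\colon A\rtimes_\alpha G\xm C^*(G)\to A\rtimes_\alpha G\xt C^*(G)$ to the identity $\perturb\alpha=\ad W\circ\direct\alpha$: this yields $\perturb\alpha\sify=\ad \Upsilon(W)\circ\direct\alpha\sify$, and $\direct\alpha\sify=\what\alpha$ by \lemref{R alpha}, while $\Upsilon(W)=(i_A\circ\mu\xt\id)(w_G)$ is precisely the associated cocycle used in \lemref{cpa functor} to define $\wilde\alpha$. The main technical obstacle I anticipate is the multiplier-algebra bookkeeping in the $\KK$-triviality step, where care is required so that $\mu\rtimes G\xm\id$ is interpreted per the convention stated just before \defnref{R coaction} (namely as the canonical homomorphism between maximal tensor products, not one factoring through $M(\KK)\xm M(C^*(G))$); once this is respected, everything else is a faithful transcription of the standard argument.
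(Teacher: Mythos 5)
Your proposal is correct and follows essentially the same route as the paper's proof, which itself is a transcription of the argument for \lemref{cpa functor} using the R-coaction tools (\corref{fn coc r}, \lemref{dual R coaction functor}, \lemref{nd morphism}, \lemref{mor coc r}) in place of the standard ones. The only (harmless) variations are that the paper obtains the cocycle by transporting $(M\xm\id)(w_G)$ along the equivariant nondegenerate map $\mu\rtimes G$ rather than invoking \corref{fn coc r} on the \cgcor\ $(A\rtimes_\alpha G,\direct\alpha,i_A\circ\mu)$ directly, and it settles $\perturb\alpha\sify=\wilde\alpha$ by a ``routine computation on the generators'' where you use naturality of $\Upsilon$ --- your version of that last step is, if anything, a bit cleaner.
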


In the statement of the above lemma, the name ``$\cpar$'' is intended to indicate that we have modified the functor $\cpa$ so that the output is an R-coaction rather than a standard coaction.

\begin{proof}
Similarly to the argument for \lemref{cpa functor},
$\mu\rtimes G$ is equivariant for the R-coactions
$\direct\rt$ and $\direct\alpha$.
Since $(M\xm\id)(w_G)$ is an $\direct\rt$-cocycle,
the unitary $W$
is an $\direct\alpha$-cocycle.
Since $\ad (M\xm\id)(w_G)\circ \direct\rt$ is trivial,
the R-coaction $\perturb\alpha$ is trivial on $(\mu\rtimes G)(\KK)$,
and so we get a \kcor\
$(A\rtimes_\alpha G,\perturb\alpha,\mu\rtimes G)$.

Given a morphism $\phi$,
we know already from the proof of
\lemref{cpa functor}
that $\phi\rtimes G$ is $(\mu\rtimes G)-(\nu\rtimes G)$ compatible.

Since
$(\phi\rtimes G)\:(A\rtimes_\alpha G,\what\alpha,i_A\circ\mu)\to(B\rtimes_\beta G,\what\beta,i_B\circ\nu)$
is a morphism of
\cgco s,
the $(i_A\circ\mu)-(i_B\circ\nu)$ compatibility, combined with 
\lemref{mor coc r},
also gives $\perturb\alpha-\perturb\beta$ equivariance,
and therefore $\phi\rtimes G$ is a morphism
of \kcor s.

The functoriality is clear, because we know that the crossed-product construction is functorial from actions to coactions.

Finally,
the last assertion, concerning the standardization, follows from a routine computation on the generators.
\end{proof}

\begin{lem}\label{c of r co}
If $(A,\delta,\iota)$ is a \kcor,
then $\delta$ restricts to an R-coaction $\relcom(\delta)$ on the relative commutant $\relcom(A,\iota)$.
Moreover, the assignments $(A,\delta,\iota)\mapsto (\relcom(A,\iota),\relcom(\delta))$
and $\phi\mapsto \relcom(\phi)$
give a functor
$\c-r$
from \kcor s to R-coactions.

Moreover, this functor respects standardizations,
\ie,
\[
\relcom(\delta)\sify=\relcom(\delta\sify).
\]
\end{lem}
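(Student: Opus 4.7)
The plan is to parallel the argument used for standard coactions in \lemref{c functor}, with the maximal tensor product $\xm$ in place of the minimal one. The key technical input I will need is the identification
\[
C\bigl(A\xm C^*(G),\iota\xm 1\bigr)=C(A,\iota)\xm C^*(G),
\]
and I expect this to be the main obstacle: the analogous identity for $\xt$ is almost formal, coming from functoriality of the minimal tensor product under inclusions, whereas for $\xm$ one must work instead with Fischer's abstract characterization of the relative commutant.

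To establish the identification, I appeal to Fischer's uniqueness criterion: $C(A\xm C^*(G),\iota\xm 1)$ is the unique closed subset $Z$ of $M(A\xm C^*(G))$ that commutes with $\iota(\KK)\xm 1$ and satisfies $\clspn\{Z(\iota(\KK)\xm 1)\}=A\xm C^*(G)$. The candidate $C(A,\iota)\xm C^*(G)$ lands inside $M(A\xm C^*(G))$ via the maximal-tensor convention preceding \defnref{R coaction}, applied to the inclusion $C(A,\iota)\hookrightarrow M(A)$, which is nondegenerate because $\clspn\{C(A,\iota)\cdot A\}=A$ (an immediate consequence of the Fischer isomorphism $\theta_A$ together with $C(A,\iota)^2=C(A,\iota)$). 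Commutation with $\iota(\KK)\xm 1$ is clear on elementary tensors, and
\[
\clspn\{(C(A,\iota)\xm C^*(G))(\iota(\KK)\xm 1)\}=\clspn\{C(A,\iota)\iota(\KK)\xm C^*(G)\}=A\xm C^*(G)
\]
by applying Fischer on the left factor.

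With the identification in hand, \corref{nd c functor} applied to the nondegenerate morphism of \kalg s $\delta\:(A,\iota)\to (A\xm C^*(G),\iota\xm 1)$ (which makes sense because $\delta\circ\iota=\iota\xm 1$ by the \kcor\ hypothesis) produces a nondegenerate homomorphism $\relcom(\delta)\:\relcom(A,\iota)\to M(\relcom(A,\iota)\xm C^*(G))$. I then verify that $\relcom(\delta)$ takes values in $\wilde M(\relcom(A,\iota)\xm C^*(G))$ by combining the $\wilde M$-property of $\delta$ with the relative-commutant identification; the R-coaction identity for $\relcom(\delta)$ is inherited by restricting that of $\delta$, applying the same identification one tensor-factor deeper to handle $C(A\xm C^*(G)\xm C^*(G),\iota\xm 1\xm 1)$; and coaction-nondegeneracy follows by writing $A=\clspn\{\relcom(A,\iota)\iota(\KK)\}$, using $\delta\circ\iota=\iota\xm 1$ to pull $\iota(\KK)$ through $\delta$, and then stripping off the $\KK$-factor via Fischer. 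An appeal to \lemref{lem injective} then confirms that $\relcom(\delta)$ is an R-coaction.

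For functoriality, given a morphism $\phi$ of \kcor s, \lemref{C functor} already supplies the homomorphism $\relcom(\phi)\:\relcom(A,\iota)\to\relcom(B,\jmath)$; its $\relcom(\delta)-\relcom(\epsilon)$ equivariance follows by restricting $\phi$'s equivariance diagram to the relative commutants, and preservation of compositions and identities is inherited from the functoriality already in \lemref{C functor}. For compatibility with standardization, I observe that the canonical surjection $\Upsilon\:A\xm C^*(G)\to A\xt C^*(G)$ is a nondegenerate morphism of \kalg s $(A\xm C^*(G),\iota\xm 1)\to (A\xt C^*(G),\iota\xt 1)$, so functoriality of $\relcom$ at the \kalg\ level (\corref{nd c functor}) yields $\relcom(\Upsilon\circ\delta)=\Upsilon\circ\relcom(\delta)$, which is precisely the desired identity $\relcom(\delta\sify)=\relcom(\delta)\sify$.
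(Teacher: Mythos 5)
Your overall architecture matches the paper's: identify $\relcom(A\xm C^*(G),\iota\xm 1)$ with $\relcom(A,\iota)\xm C^*(G)$, apply the nondegenerate relative-commutant functor of \corref{nd c functor} to $\delta$, check the coaction identity and coaction-nondegeneracy against Fischer's characterization, and invoke \lemref{lem injective}. But in two places you wave at exactly the steps that require care. First, your Fischer-criterion argument shows only that the \emph{image} of $\relcom(A,\iota)\xm C^*(G)$ under the canonical map into $M(A\xm C^*(G))$ satisfies the two conditions, hence coincides with the relative commutant as a subset. It does not show that this canonical map is injective, and for maximal tensor products that is not automatic --- the paper explicitly warns that $\pi\xm\rho$ need not be faithful even when $\pi$ and $\rho$ are. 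Without injectivity you only get that $\relcom(A\xm C^*(G),\iota\xm 1)$ is a quotient of $\relcom(A,\iota)\xm C^*(G)$, which is not enough to regard $\relcom(\delta)$ as a map into $M(\relcom(A,\iota)\xm C^*(G))$. The paper's \lemref{C tensor D} closes this by exhibiting an explicit isomorphism of $A\xm C^*(G)$ with $(\relcom(A,\iota)\xm C^*(G))\xt\KK$ (built from $\theta_A\inv\xm\id$ and a flip, using nuclearity of $\KK$) and then applying the recognition \lemref{lem recognize C}; some such argument is needed.

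Second, and more seriously, your claim that equivariance of $\relcom(\phi)$ ``follows by restricting $\phi$'s equivariance diagram to the relative commutants'' is precisely the move the paper flags as invalid. The morphisms here are possibly \emph{degenerate} homomorphisms $\phi\:A\to B$, and $\relcom(\phi)$ is by definition the $\KK$-strict extension $\bar\phi$ restricted to $\relcom(A,\iota)\subset M(A)$; likewise the right-hand vertical arrow involves the extension of $\phi\xm\id$. A degenerate $\phi$ does not extend to all of $M(A)$, so one cannot simply restrict the commuting square. The paper instead verifies, for each $m\in\relcom(A,\iota)$ and $k\in\KK$,
\[
\epsilon\bigl(\relcom(\phi)(m)\bigr)\bigl(\jmath(k)\xm 1\bigr)
=\bar{\relcom(\phi)\xm\id}\bigl(\delta(m)\bigr)\bigl(\jmath(k)\xm 1\bigr),
\]
using $\epsilon\circ\jmath=\jmath\xm 1$, Equation~\eqref{phi km}, and a further multiplication by $1\xm c$ to justify the last step; you need an argument of this kind. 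Your standardization argument is essentially the paper's and is fine, modulo identifying $\relcom(\Upsilon)$ with the canonical surjection under the tensor identifications.
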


In the above, the name $\c-r$ is meant to indicate ``commutant of R-coaction''.

For the proof, we need to know how tensoring with a fixed $C^*$-algebra affects relative commutants.

\begin{lem}\label{C tensor D}
Let $(A,\iota)$ be a \kalg, and let $D$ be any $C^*$-algebra.
Then we have another \kalg\ $(A\xm D,\iota\xm 1_D)$.
Define an isomorphism $\sigma$ by the commutative diagram
\[
\xymatrix@C+30pt{
A\xm D \ar[dr]_(.4)\sigma^\simeq \ar[r]^-{\theta_A\inv\xm\id}_-\simeq
&\relcom(A,\iota)\xm\KK\xm D \ar[d]^{\id\xm\Sigma}_\simeq
\\
&\relcom(A,\iota)\xm D\xm\KK.
}
\]
Then there is a unique isomorphism
\[
\tau\:\relcom(A\xm D,\iota\xm 1_D)\variso \relcom(A,\iota)\xm D
\]
such that
\[
\sigma(m)=\tau(m)\xm 1_\KK\righttext{for all}m\in \relcom(A\xm D,\iota\xm 1_D).
\]
\end{lem}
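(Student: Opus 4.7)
The plan is to transport the relative-commutant computation through the isomorphism $\sigma$. First, I extend $\sigma$ canonically to an isomorphism of multiplier algebras (which I will also denote by $\sigma$) and track what happens to the $\KK$-algebra structure. Using the defining property $\theta_A(m\xm k)=m\iota(k)$ of $\theta_A$, for $k\in\KK$ I compute
\[
\sigma\bigl((\iota\xm 1_D)(k)\bigr) = (\id\xm\Sigma)\bigl(\theta_A\inv(\iota(k))\xm 1_D\bigr) = (\id\xm\Sigma)(1\xm k\xm 1_D) = 1\xm 1_D\xm k,
\]
so under $\sigma$ the embedding $\iota\xm 1_D$ corresponds to placing $\KK$ in the third tensor factor via $k\mapsto 1\xm 1_D\xm k$. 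Since an isomorphism of $C^*$-algebras intertwining the $\KK$-algebra structures automatically restricts to an isomorphism of relative commutants, $\sigma$ carries $\relcom(A\xm D,\iota\xm 1_D)$ onto $\relcom(\relcom(A,\iota)\xm D\xm\KK,\,1\xm 1_D\xm\id_\KK)$.

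Second, I identify this latter relative commutant explicitly. Writing $B=\relcom(A,\iota)\xm D$, the claim is
\[
\relcom(B\xm\KK,\, 1_B\xm\id_\KK) = B\xm 1_\KK.
\]
Here I invoke the uniqueness characterization recalled just after the definition of $\relcom$: the relative commutant is the unique norm-closed subset of the multiplier algebra commuting elementwise with the image of $\KK$ and spanning the ambient algebra against that image. The subalgebra $B\xm 1_\KK$ obviously commutes with $1_B\xm\KK$, and $\clspn\{(B\xm 1_\KK)(1_B\xm\KK)\}=B\xm\KK$, so uniqueness yields the identification. Nuclearity of $\KK$ makes $B\xm\KK=B\xt\KK$, so the cited references apply without modification.

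Finally, I define $\tau$ as the composition of $\sigma|_{\relcom(A\xm D,\iota\xm 1_D)}$, viewed as an isomorphism onto $B\xm 1_\KK$, with the canonical identification $B\xm 1_\KK\cong B=\relcom(A,\iota)\xm D$. By construction $\sigma(m)=\tau(m)\xm 1_\KK$ for every $m\in\relcom(A\xm D,\iota\xm 1_D)$, and $\tau$ is the unique map with this property because $b\mapsto b\xm 1_\KK$ is injective. The main technical point is step two, which rests on applying the uniqueness characterization of the relative commutant in a maximal-tensor-product setting; nuclearity of $\KK$ is what makes this go through painlessly, and the remainder is bookkeeping with multiplier algebras and tensor-factor positions.
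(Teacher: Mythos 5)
Your proposal is correct and follows essentially the same route as the paper: the paper likewise computes that $\sigma$ carries $\iota\xm 1_D$ to the embedding of $\KK$ as the last tensor factor and then invokes its auxiliary Lemma~\ref{lem recognize C} (whose content — that an isomorphism of $\KK$-decorated algebras restricts to an isomorphism of relative commutants, together with the identification $\relcom(B\xt\KK,1_B\xt\id_\KK)=B\xt 1_\KK$ — is exactly what you reprove inline via Fischer's uniqueness characterization). Your explicit remark that nuclearity of $\KK$ lets the minimal-tensor-product statements apply verbatim is a worthwhile detail the paper leaves implicit.
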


When using \lemref{C tensor D},
we will usually suppress the $\tau$ and identify
\[
\relcom(A\xm D,\iota\xm 1_D)=\relcom(A,\iota)\xm D.
\]

In turn, for the proof of the auxiliary \lemref{C tensor D}
it will be convenient to have available
the following fact, which gives a convenient tool to recognize relative commutants.

\begin{lem}\label{lem recognize C}
Let $(A,\iota)$ be a \kalg\ and $B$ a $C^*$-algebra,
and
suppose that $\sigma\:A\variso B\xt\KK$ is an isomorphism
such that $\sigma\circ\iota=1_B\xt \id_\KK$.
Then
there is a unique isomorphism $\tau\:\relcom(A,\iota)\variso B$ such that
\[
\sigma(m)=\tau(m)\xt 1_\KK\righttext{for}m\in \relcom(A,\iota).
\]
\end{lem}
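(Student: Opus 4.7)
The plan is to transport the $\KK$-algebra structure across $\sigma$ and then identify the relative commutant inside the model algebra $B\xt\KK$, where it has a transparent description.

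First I would extend $\sigma$ to an isomorphism $\bar\sigma\:M(A)\variso M(B\xt\KK)$ of multiplier algebras. The hypothesis $\sigma\circ\iota=1_B\xt\id_\KK$ says precisely that $\bar\sigma$ is an isomorphism of \kalg s from $(A,\iota)$ to $(B\xt\KK,1_B\xt\id_\KK)$. Since $\bar\sigma$ is a $*$-isomorphism on the multiplier algebras and carries $\iota(\KK)$ onto $(1_B\xt\id_\KK)(\KK)=1_B\xt\KK$ elementwise, it maps the relative commutant bijectively:
\[
\bar\sigma\bigl(\relcom(A,\iota)\bigr)=\relcom(B\xt\KK,1_B\xt\id_\KK).
\]

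Next, I would identify this latter relative commutant as $B\xt 1_\KK\subset M(B\xt\KK)$. For this I would appeal to Fischer's characterization recalled just before the lemma: $\relcom(B\xt\KK,1_B\xt\id_\KK)$ is the unique closed subset $Z\subset M(B\xt\KK)$ that commutes elementwise with $1_B\xt\KK$ and satisfies $\clspn\{Z(1_B\xt\KK)\}=B\xt\KK$. The subset $B\xt 1_\KK$ (embedded in $M(B\xt\KK)$ via $b\mapsto b\xt 1_\KK$) clearly commutes with $1_B\xt\KK$ and satisfies $\clspn\{(B\xt 1_\KK)(1_B\xt\KK)\}=B\xt\KK$, so by uniqueness the identification holds. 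Moreover, $b\mapsto b\xt 1_\KK$ is an injective $*$-homomorphism $B\hookrightarrow M(B\xt\KK)$ with image $\relcom(B\xt\KK,1_B\xt\id_\KK)$, and hence is a $*$-isomorphism onto the relative commutant.

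Composing, I define $\tau\:\relcom(A,\iota)\to B$ to be the unique map satisfying $\bar\sigma(m)=\tau(m)\xt 1_\KK$; concretely $\tau$ is $\bar\sigma$ restricted to $\relcom(A,\iota)$ followed by the inverse of $b\mapsto b\xt 1_\KK$. Both constituents are $*$-isomorphisms, so $\tau$ is too. Uniqueness of $\tau$ is automatic because $b\xt 1_\KK$ determines $b\in B$. There is no real obstacle here: the only substantive point is the identification $\relcom(B\xt\KK,1_B\xt\id_\KK)=B\xt 1_\KK$, which is exactly what Fischer's uniqueness characterization is designed to handle.
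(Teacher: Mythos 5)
Your proof is correct and follows essentially the same route as the paper: both arguments reduce to the facts that an isomorphism of \kalg s carries relative commutants onto relative commutants and that $\relcom(B\xt\KK,1_B\xt\id_\KK)=B\xt 1_\KK$. The only difference is cosmetic --- the paper obtains the first fact by citing the category equivalence between \kalg s and stable $C^*$-algebras from \cite{koqstable}, whereas you verify it directly (together with the model computation) via Fischer's uniqueness characterization of the relative commutant, which is a perfectly adequate substitute.
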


\begin{proof}
It is an immediate consequence of the category equivalence between \kalg s and stable $C^*$-algebras \cite[Theorem~4.4]{koqstable}
that
any isomorphism between \kalg s restricts to an isomorphism between the relative commutants,
and the lemma follows.
\end{proof}

\begin{proof}[Proof of \lemref{C tensor D}]
The isomorphism
$\sigma$
takes $\iota\xm 1_D$ to
the homomorphism
$1_{A\xm D}\xm \id_\KK$.
Consequently,
the result follows from
\lemref{lem recognize C}.
\end{proof}

We are now ready to prove that we can restrict R-coactions to relative commutants.

\begin{proof}[Proof of \lemref{c of r co}]
For the first part, similarly to the proof for $S$-coactions,
we appeal to \lemref{C functor nondegenerate}.
We have a nondegenerate homomorphism
\[
\delta\:A\to M(A\xm C^*(G))
\]
such that
\[
\delta\circ\iota=\iota\xm 1,
\]
so applying
\lemref{C functor nondegenerate}
gives
\[
\relcom(\delta)\:\relcom(A,\iota)\to \relcom(A\xm C^*(G),\iota\xm 1).
\]
By the abstract \lemref{C tensor D},
\[
\relcom\bigl(A\xm C^*(G),\iota\xm 1\bigr)=\relcom(A,\iota)\xm C^*(G),
\]
so we have a nondegenerate homomorphism
\[
\relcom(\delta)\:(A,\iota)\to M\bigl(\relcom(A,\iota)\xm C^*(G)\bigr),
\]
which, by construction, is the restriction of $\delta$ to the $C^*$-subalgebra $\relcom(A,\iota)$ of $M(A)$.

We will show that $(\relcom(A,\iota),\relcom(\delta))$ is an R-coaction.
As we mentioned before, this is done for \coaction s in \cite[Section~3]{fischer} (see \cite[Lemma~3.2]{koqmaximal} for a more detailed proof),
so it might seem tempting to just say something along the lines of ``it carries over routinely to R-coactions''. However, maximal tensor products have hidden subtleties, so it is safer to show (at least some of) the details. We essentially follow the strategy of \cite{koqmaximal}, but we streamline things a bit with the help of the observations we made at the beginning of this proof.

By \lemref{lem injective}, it suffices to
verify the coaction identity and coaction-nondegeneracy.
For the coaction identity, we have
\begin{align*}
&(\relcom(\delta)\xm\id)\circ \relcom(\delta)
\\&\quad=\relcom\bigl((\delta\xm\id)\circ\delta\bigr)
&&\text{(\lemref{C functor nondegenerate})}
\\&\quad=\relcom\bigl((\id\xm\deltag)\circ\delta\bigr)&&\text{(coaction identity for $\delta$)}
\\&\quad=\relcom(\id\xm\deltag)\circ \relcom(\delta)
&&\text{(\lemref{C functor nondegenerate} again).}
\end{align*}
But since $\relcom(\id\xm\deltag)$ is just the restriction of $\id\xm\deltag$ to a $C^*$-subalgebra of
$M(A\xm C^*(G))$,
the above computation tells us that for all $a\in A$ we have
\[
(\relcom(\delta)\xm\id)\circ \relcom(\delta)(a)=(\id\xm\deltag)\circ \relcom(\delta)(a),
\]
proving the coaction identity.

Next, for the coaction-nondegeneracy of $\relcom(\delta)$,
we must show that
\[
\clspn\{\relcom(\delta)(\relcom(A,\iota))(1\xm C^*(G))\}=\relcom(A,\iota)\xm C^*(G),
\]
and we use the following characterization (see \cite[Remark~3.1]{fischer}):
for any \kalg\ $(B,\jmath)$,
the relative commutant $\relcom(B,\jmath)$ is the unique norm-closed subset $Z$ of $M(B)$ satisfying
\begin{enumerate}
\item
$z\jmath(k)=\jmath(k)z$ for all $z\in Z,k\in\KK$, and

\item
$\clspn\{Z\jmath(\KK)\}=B$.
\end{enumerate}
It therefore suffices to verify (i)--(ii) for
the \kalg\
\[
(A\xm C^*(G),\iota\xm 1)
\]
and the closed subset
\[
Z=\clspn\{\relcom(\delta)(\relcom(A,\iota))(1\xm C^*(G))\}.
\]
Obviously
$1\xm C^*(G)$ commutes with $\iota(\KK)\xm 1$,
and so does $\relcom(\delta)(\relcom(A,\iota))$ because for any morphism $\phi\:(B,\jmath)\to (\relcom,\zeta)$ of \kalg s
we know that $\relcom(\phi)(\relcom(B,\jmath))$ commutes with $\zeta(\KK)$.
This implies (i).

For (ii), we note that with the above notation we also have
\begin{equation}\label{eq span}
\clspn\{\relcom(\phi)(\relcom(B,\jmath))\zeta(\KK)\}=\phi(B),
\end{equation}
so
\begin{align*}
&\clspn\bigl\{Z(\iota(\KK)\xm 1)\bigr\}
\\&\quad=\clspn\bigl\{\relcom(\delta)(\relcom(A,\iota))(1\xm C^*(G))(\iota(\KK)\xm 1)\bigr\}
\\&\quad=\clspn\bigl\{\relcom(\delta)(\relcom(A,\iota))(\iota(\KK)\xm 1)(1\xm C^*(G))\bigr\}
\\&\quad\overset{*}=\clspn\bigl\{\delta(A)(1\xm C^*(G))\bigr\}
\\&\quad=A\xm C^*(G),
\end{align*}
where the equality at * follows by applying \eqref{eq span} to the morphism
$\delta\:(A,\iota)\to (A\xm C^*(G),\iota\xm 1)$
of \kalg s.
Thus we have shown the coaction-nondegeneracy, so we have an R-coaction $\relcom(\delta)$ on $\relcom(A,\iota)$.

Finally, we turn to the functoriality:
Since we already know that $(A,\iota)\mapsto \relcom(A,\iota)$ is functorial,
it only remains to verify that 
$\relcom(\phi)$ is $\relcom(\delta)-\relcom(\epsilon)$ equivariant
whenever $\phi\:(A,\delta,\iota)\to (B,\epsilon,\jmath)$ 
is a morphism of \kcor s,
\ie, the diagram
\begin{equation}\label{C phi}
\xymatrix{
\relcom(A,\iota) \ar[r]^-{\relcom(\delta)} \ar[d]_{\relcom(\phi)}
&\wilde M(\relcom(A,\iota)\xm C^*(G)) \ar[d]^-{\relcom(\phi)\xm\id}
\\
\relcom(B,\jmath) \ar[r]_-{\relcom(\epsilon)}
&\wilde M(\relcom(B,\jmath)\xm C^*(G))
}
\end{equation}
commutes.
This might seem almost trivial:
by assumption, the diagram
\[
\xymatrix{
A \ar[r]^-\delta \ar[d]_{\phi}
&\wilde M(A\xm C^*(G)) \ar[d]^{\phi\xm\id}
\\
B \ar[r]_-\epsilon
&\wilde M(B\xm C^*(G))
}
\]
commutes,
and in \eqref{C phi} we seem to be restricting to appropriate subalgebras of the multiplier algebras.
There is a subtlety, however:
the homomorphisms $\phi$ and $\phi\xm\id$ may be degenerate,
so we cannot just
``extend and restrict'' them.
Consequently, we are forced to get our hands dirty:
let $m\in \relcom(A,\iota)$.
We must show that
\[
\epsilon\bigl(\relcom(\phi)(m)\bigr)
=\bigl(\relcom(\phi)\xm\id\bigr)\bigl(\delta(m)\bigr),
\]
in which we have dropped the ``$C$'' for $\delta$ and $\epsilon$, since they are nondegenerate homomorphisms,
and hence for them we are just extending and restricting as usual.
It suffices to show that if we take any $k\in\KK$, then
\[
\epsilon\bigl(\relcom(\phi)(m)\bigr)\bigl(\jmath(k)\xm 1\bigr)
=\bigl(\relcom(\phi)\xm\id\bigr)\bigl(\delta(m)\bigr)\bigl(\jmath(k)\xm 1\bigr).
\]
The computation goes as follows:
\begin{align*}
&\epsilon\bigl(\relcom(\phi)(m)\bigr)\bigl(\jmath(k)\xm 1\bigr)
\\&\quad=\epsilon\bigl(\relcom(\phi)(m)\jmath(k)\bigr)&&\text{($\epsilon\circ\jmath=\jmath\xm 1$)}
\\&\quad=\epsilon\bigl(\phi(m\iota(k))\bigr)&&\text{(Equation~\eqref{phi km})}
\\&\quad=\bar{\phi\xm\id}\bigl(\delta(m\iota(k))\bigr)&&\text{($\phi$ is equivariant)}
\\&\quad=\bar{\phi\xm\id}\bigl(\delta(m)(\iota(k)\xm 1)\bigr)&&\text{($\delta\circ\iota=\iota\xm 1$)}
\\&\quad\overset *=\bar{\relcom(\phi)\xm\id}\bigl(\delta(m)\bigr)(\jmath(k)\xm 1),
\end{align*}
where to justify the equality at * we multiply on the right by $1\xm c$ for an arbitrary $c\in C^*(G)$:
\begin{align*}
&\bar{\phi\xm\id}\bigl(\delta(m)(\iota(k)\xm 1)\bigr)(1\xm c)
\\&\quad\overset 1=(\phi\xm\id)\bigl(\delta(m)(\iota(k)\xm 1)(1\xm c)\bigr)
\\&\quad=(\phi\xm\id)\bigl(\delta(m)(1\xm c)(\iota(k)\xm 1)\bigr)
\\&\quad\overset 2=\bigl(\relcom(\phi)\xm\id\bigr)\bigl(\delta(m)(1\xm c)\bigr)(\jmath(k)\xm 1)
\\&\quad\overset 3=\bar{\relcom(\phi)\xm\id}\bigl(\delta(m)\bigr)(1\xm c)(\jmath(k)\xm 1)
\\&\quad=\bar{\relcom(\phi)\xm\id}\bigl(\delta(m)\bigr)(\jmath(k)\xm 1)(1\xm c),
\end{align*}
and where we used Equation~\eqref{phi km} three times:
for
\begin{itemize}
\item
$\phi\xm\id$ and the embeddings of $C^*(G)$
at equality 1,

\item
$\phi\xm\id$ and the embeddings of $\KK$
at equality 2,
and

\item
$\relcom(\phi)\xm\id$ and the embeddings of $C^*(G)$
at equality 3.
\end{itemize}

For the assertion regarding standardizations,
it is tempting to just say something along the following lines:
for both R-coactions and \coaction s,
the coaction on the relative commutant $\relcom(A,\iota)$
is produced via the nondegenerate version of the relative-commutant functor of \corref{nd c functor}.
But this is really just intuition, not a proof.
Here are the details, for which we will use functoriality of the relative commutant for nondegenerate homomorphisms.
Consider the diagrams
\begin{equation}\label{top diag}
\xymatrix{
A \ar[r]^-\delta \ar[dr]_(.4){\delta\sify}
&M(A\xm C^*(G)) \ar[d]^\Upsilon
\\
&M(A\xt C^*(G))
}
\end{equation}
and
\begin{equation}\label{bottom diag}
\xymatrix{
\relcom(A,\iota) \ar[r]^-{\relcom(\delta)} \ar[dr]_(.4){\relcom(\delta)\sify}
&M(\relcom(A,\iota)\xm C^*(G)) \ar[d]^\Upsilon
\\
&M(\relcom(A,\iota)\xt C^*(G)),
}
\end{equation}
which both commute by definition of standardization.
We will show that \eqref{bottom diag} in fact is the image of \eqref{top diag} under the relative-commutant functor,
and in particular $\relcom(\delta)\sify=\relcom(\delta\sify)$.
The key is to compare the right-hand vertical arrows,
and to recall that both $\relcom(\delta)$ and $\relcom(\delta\sify)$ are images
under the functor $\relcom$
of
morphisms that are nondegenerate as $C^*$-homomorphisms.
Recall that by \lemref{C tensor D} we have
\[
\relcom(A\xm C^*(G),\iota\xm 1)=\relcom(A,\iota)\xm C^*(G),
\]
and similarly for $A\xt C^*(G)$.
Then $\delta$ may be regarded as a \kalg\ morphism
\[
\relcom(A,\iota)\to M(A\xm C^*(G),\iota\xm 1),
\]
where on the right-hand side we really mean the multiplier algebra $M(A\xm C^*(G))$
together with the nondegenerate homomorphism
\[
\iota\xm 1\:\KK\to M(A\xm C^*(G)).
\]
Similarly, $\Upsilon$ is also a \kalg\ morphism:
\[
\Upsilon\:(A\xm C^*(G),\iota\xm 1)\to (A\xt C^*(G),\iota\xt 1),
\]
which, being a surjective homomorphism, can of course be uniquely extended to the multipliers.
Thus \eqref{top diag} is really a commutative diagram in the nondegenerate category of \kalg s:
\[
\xymatrix{
(A,\iota) \ar[r]^-\delta \ar[dr]_(.4){\delta\sify}
&M(A\xm C^*(G),\iota\xm 1) \ar[d]^\Upsilon
\\
&M(A\xt C^*(G),\iota\xt 1).
}
\]
The nondegenerate relative-commutant functor takes this to a commutative diagram
\begin{equation}\label{C diag}
\xymatrix{
\relcom(A,\iota) \ar[r]^-{\relcom(\delta)} \ar[dr]_(.4){\relcom(\delta\sify)}
&M(\relcom(A\xm C^*(G),\iota\xm 1)) \ar[d]^{\relcom(\Upsilon)}
\\
&M(\relcom(A\xt C^*(G),\iota\xt 1)).
}
\end{equation}
Identifying the $C^*$-algebras via the canonical isomorphisms
\begin{align*}
\relcom(A\xm C^*(G),\iota\xm 1)&\simeq \relcom(A,\iota)\xm C^*(G)
\\
\relcom(A\xt C^*(G),\iota\xt 1)&\simeq \relcom(A,\iota)\xt C^*(G),
\end{align*}
we see that the diagonal arrows in the diagrams\eqref{bottom diag} and \eqref{C diag} coincide, as desired.
\end{proof}

\begin{rem}
We pause to contrast the equivariance argument in the above proof with the corresponding argument for \coaction s in the proof of \cite[Lemma~3.8]{klqfunctor2}, where we were able to use the $B(G)$-module structure associated with a \coaction;
we could not use that tactic here because
slice maps do not generally separate the points of maximal tensor products.
\end{rem}

\begin{defn}\label{R of max}
Let $(A,\delta)$ be a maximal coaction.
The
composition
$\c-r\circ\cpar\circ\cpc$ produces an R-coaction $(A^m,\eta)$.
The canonical surjection $\psi_A\:A^m\to A$
is an isomorphism because $\delta$ is maximal,
and so it transforms $\eta$ to an R-coaction on $A$,
which we call the \emph{R-ification} of $\delta$
and denote by $\delta\rify$.
\end{defn}

\begin{thm}\label{main}
The above
R-ification process
$(A,\delta)\mapsto (A,\delta\rify)$
is
a functor
from maximal \coaction s to R-coactions,
where the morphisms are left unchanged: $\phi\mapsto \phi$.
Moreover, we have $\delta\rify\sify=\delta$.
\end{thm}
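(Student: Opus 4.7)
The plan is to leverage the functoriality of each of the three steps $\cpc$, $\cpar$, $\relcom$ (Lemmas~\ref{cpc functor}, \ref{tilde alpha}, and \ref{c of r co}) together with the naturality of the canonical surjection $\psi\:A^m\to A$ reviewed at the end of \secref{fischer construction}. Applying the composite of these three functors to a morphism $\phi\:(A,\delta)\to (B,\epsilon)$ of maximal \coaction s produces an R-coaction morphism $\phi^m\:(A^m,\eta_A)\to(B^m,\eta_B)$, where $\eta_A=\relcom(\perturb{\what\delta})$ and $\eta_B=\relcom(\perturb{\what\epsilon})$. Since $\delta$ and $\epsilon$ are maximal, the surjections $\psi_A$ and $\psi_B$ are isomorphisms, and the naturality square \eqref{natural} gives $\phi\circ\psi_A=\psi_B\circ\phi^m$. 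Transporting $\eta_A$ and $\eta_B$ along these isomorphisms defines $\delta\rify$ and $\epsilon\rify$ and simultaneously forces $\phi$ itself to be $\delta\rify-\epsilon\rify$ equivariant; preservation of identities and composition is inherited from the composite functor, so the rule $\phi\mapsto\phi$ really does give a functor from maximal \coaction s to R-coactions.

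For the identity $\delta\rify\sify=\delta$, I would first compute the standardization of $\eta_A$ \emph{before} transport. The last assertion of \lemref{tilde alpha} gives $\perturb{\what\delta}\sify=\wilde{\what\delta}$, and \lemref{c of r co} says that the relative-commutant step respects standardizations, so
\[
\eta_A\sify=\relcom(\perturb{\what\delta})\sify=\relcom(\perturb{\what\delta}\sify)=\relcom(\wilde{\what\delta})=\delta^m,
\]
which is exactly the standard Fischer maximalization of $\delta$. Since the standardization assignment is functorial (\lemref{s functor}) and in particular commutes with transport along the isomorphism $\psi_A$, the R-coaction $\delta\rify\sify$ is obtained from $\delta^m$ by transport along $\psi_A$; but that transport equals $\delta$ by the very definition of $\psi_A$ as a $\delta^m-\delta$ equivariant map, which proves the claim.

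The main obstacle is purely bookkeeping. One has to check that transporting an R-coaction along an isomorphism genuinely produces an R-coaction --- the coaction identity, coaction-nondegeneracy, and the $\wilde M(-\xm C^*(G))$ range condition all travel across an isomorphism without difficulty, but the presence of the maximal tensor product makes this a slightly more delicate verification than in the standard case --- and that the composite functor really does output $\phi$ itself once the transport has been performed at both ends (as opposed to $\phi^m$). Once these points are unpacked, the statement reduces to a clean chase through the functorial machinery already assembled in Sections~\ref{fischer construction} and \ref{r-co sec}.
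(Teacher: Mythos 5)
Your proposal is correct and follows essentially the same route as the paper: functoriality comes from composing the three functors $\cpc$, $\cpar$, $\c-r$ and invoking the naturality square \eqref{natural} together with the fact that $\psi_A,\psi_B$ are isomorphisms in the maximal case, and the identity $\delta\rify\sify=\delta$ is obtained by combining \lemref{tilde alpha}, \lemref{c of r co}, and the $\delta^m-\delta$ equivariance of $\psi_A$. Your version merely spells out the intermediate computation $\eta_A\sify=\relcom(\perturb{\what\delta})\sify=\relcom(\wilde{\what\delta})=\delta^m$, which the paper leaves implicit.
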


\begin{proof}
The only nonobvious part is
the functoriality of the last step:
from start to finish we do have a well-defined process
converting
a maximal \coaction\ $\delta$ 
into
an R-coaction $\delta\rify$
on the same $C^*$-algebra $A$.
We need to know that if $\phi\:(A,\delta)\to (B,\epsilon)$ is a morphism of maximal \coaction s, then
the same homomorphism
$\phi\:A\to B$ is also $\delta\rify-\epsilon\rify$ equivariant.
Following the process up until the penultimate step,
\ie, the composition $\c-r\circ\cpar\circ\cpc$,
gives
R-coactions $\eta$ and $\zeta$ on $A^m$ and $B^m$, respectively,
and the morphism $\phi$ is taken to the maximalization
$\phi^m\:A^m\to B^m$,
as we discussed above.
Since
the diagram
\[
\xymatrix{
A^m \ar[r]^-{\phi^m} \ar[d]_{\psi_A}
&B^m \ar[d]^{\psi_B}
\\
A \ar[r]_-\phi
&B
}
\]
commutes,
we see that $\phi\:A\to B$ is $\delta\rify-\epsilon\rify$ equivariant,
and we have shown that R-ification is functorial.

Finally, for the assertion
that $\delta\rify\sify=\delta$,
just combine
\lemref{tilde alpha},
\lemref{c of r co},
and the fact that the isomorphism $\psi_A\:A^m\variso A$ is $\delta^m-\delta$ equivariant.
\end{proof}

Note that, as we mentioned earlier, the existence of an R-coaction whose standardization is $\delta$ does not imply that $\delta$ is maximal. However, for the present we are satisfied with the above R-ification functor on maximal coactions.

\begin{defn}
An R-coaction is \emph{maximal} if its standardization is maximal.
\end{defn}

Note that we are not just adapting the definition of maximality to R-coactions: we make no attempt to discuss the crossed product of an R-coaction (but see \cite{rae:representation}), so in particular it would not make sense to ask about injectivity of the canonical surjection
$\Phi\:A\rtimes_\delta G\rtimes_{\what\delta} G\to A\xt \KK$.

If $(B,\alpha)$ is an action,
then
we can directly produce the dual R-coaction
$\direct\alpha$ on the crossed product $B\rtimes_\alpha G$,
as in \defnref{dual R coaction}.
We are finally ready for our main result:
the direct approach is consistent with the R-ification of the dual \coaction\ $\what\alpha$,
as in \defnref{R of max}.

\begin{thm}\label{same}
Let $(B,\alpha)$ be an action, with dual \coaction\ $(B\rtimes_\alpha G,\what\alpha)$.
Then the two R-coactions
$\what\alpha\rify$ and
$\direct\alpha$
on $B\rtimes_\alpha G$ coincide.
\end{thm}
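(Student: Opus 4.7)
The strategy is to reduce to a check on generators. Both $\direct\alpha$ and $\what\alpha\rify$ are R-coactions (hence, in particular, $*$-homomorphisms) on $A := B\rtimes_\alpha G$, which is generated as a $C^*$-algebra by $i_B(B)\cup i_G^\alpha(C^*(G))$; so it suffices to verify $\direct\alpha = \what\alpha\rify$ on this set. For $\direct\alpha$, \defnref{dual R coaction} immediately gives $\direct\alpha(i_B(b)) = i_B(b)\xm 1$ and $\direct\alpha\circ i_G^\alpha = (i_G^\alpha\xm\id)\circ\deltagr$, and the task is to derive these same formulas for $\what\alpha\rify$.

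Unpacking \defnref{R of max} with $\delta := \what\alpha$: because $\delta$ is maximal, $\psi_A\:A^m\variso A$ is an isomorphism, and $\what\alpha\rify$ is the transport under $\psi_A$ of the R-coaction $\relcom(\perturb{\what\delta})$ constructed on $A^m = \relcom(X, j_G^\delta\rtimes G)$, where $X := A\rtimes_\delta G\rtimes_{\what\delta} G$ and $\perturb{\what\delta} = \ad W\circ\direct{\what\delta}$ with $W = (i_{A\rtimes_\delta G}^{\what\delta}\circ j_G^\delta\xm\id)(w_G)$. Maximality also makes the canonical surjection $\Phi_A\:X\to A\xt\KK$ an isomorphism, and Fischer's analysis (combined with \lemref{C tensor D} and the identification $\relcom(A\xt\KK,1\xt\id_\KK) = A$) realizes $\psi_A$ as the restriction of $\Phi_A$ to the relative commutant. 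The central task then reduces to computing $\perturb{\what\delta}$ on the preimages of the generators of $A$, namely on $j_A^\delta(i_B(b))$ and $j_A^\delta(i_G^\alpha(c))$, viewed as multipliers of $X$ lying in the relative commutant. For the first: the defining triviality of $\delta = \what\alpha$ on $i_B(B)$ forces $\direct{\what\delta}$ to be trivial on $j_A^\delta(i_B(b))$, and a direct calculation --- using Lemmas~\ref{blend1}--\ref{blend2} to propagate compatibility relations through tensor factors --- shows that $W$ commutes with the $\xm 1$-extension of this element, so the cocycle perturbation contributes nothing and $\perturb{\what\delta}(j_A^\delta(i_B(b))) = j_A^\delta(i_B(b))\xm 1$. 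For the second: the relevant inputs are the defining equivariance of $j_G^\delta$ for $\what\delta$ as in \defnref{dual R coaction} together with the structure of $W$ via \lemref{mor coc r}, which combine to yield $\perturb{\what\delta}(j_A^\delta(i_G^\alpha(c))) = (j_A^\delta(i_G^\alpha)\xm\id)\circ\deltagr(c)$. Transporting back via $\psi_A$ delivers the desired formulas for $\what\alpha\rify$, matching $\direct\alpha$ on generators.

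The principal obstacle is the cocycle bookkeeping in the computations above: because maximal tensor products lack the separating slice-map techniques available for minimal tensor products (compare the remark following \lemref{c of r co}), each identity involving $W$ must be verified directly on generators rather than deduced through categorical machinery, and the manipulations propagating such identities across the various tensor factors rely in an essential way on Lemmas~\ref{blend1}--\ref{blend2} and on \lemref{mor coc r}. A subsidiary concern is the consistency of the chain of identifications $A^m \simeq \relcom(X, j_G^\delta\rtimes G) \simeq \relcom(A\xt\KK, 1\xt\id_\KK) \simeq A$ with the R-coaction structures on both sides; this is handled by \lemref{C tensor D} together with the nuclearity of $\KK$ (so that $A\xt\KK = A\xm\KK$), which allows the final restriction to the relative commutant to live comfortably in the R-coaction (maximal-tensor) setting.
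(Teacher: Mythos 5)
Your overall strategy is the paper's: run $\what\alpha$ through the Fischer process, perturb the dual R-coaction on the double crossed product by the cocycle $W$, restrict to the relative commutant, and compare with $\direct\alpha$ on the generators $i_B(B)\cup i_G^\alpha(C^*(G))$. The formulas you state for $\perturb{\what\delta}$ are also correct: writing $X=A\rtimes_\delta G\rtimes_{\what\delta}G\simeq A\xt\KK$ with $\delta=\what\alpha$, one has $i_E\circ j_A=(\id\xt\lambda)\circ\delta$, so $i_E(j_A(i_B(b)))=i_B(b)\xt 1$ and $i_E(j_A(i_G^\alpha(s)))=i_G^\alpha(s)\xt\lambda_s$, and $\perturb{\what\delta}$ sends the first to $i_B(b)\xm 1\xm 1$ and the second to $i_G^\alpha(s)\xm\lambda_s\xm s$ (the covariance of $(\lambda,M)$ for $\deltag$ produces the last leg).

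The gap is in your final step. You assert that $j_A^\delta(i_G^\alpha(c))$, viewed as a multiplier of $X$, ``lies in the relative commutant,'' and that transporting through $\psi_A$ then finishes the proof. It does not lie there: $i_G^\alpha(s)\xt\lambda_s$ fails to commute with $\iota(\KK)=1\xt\KK$, so it is not a multiplier of $A^m=\relcom(X,\iota)=A\xt 1$, and $\psi_A$ is not defined on it. The actual $\psi_A$-preimage of the generator $i_G^\alpha(s)$ is $i_G^\alpha(s)\xt 1$, which differs from $i_E(j_A(i_G^\alpha(s)))$ by the factor $1\xt\lambda_{s\inv}$. The missing --- and essential --- step is the factorization
\[
i_G^\alpha(s)\xt 1=\bigl(i_G^\alpha(s)\xt\lambda_s\bigr)\bigl(1\xt\lambda_{s\inv}\bigr),
\]
together with the fact that $(X,\perturb{\what\delta},\iota)$ is a \kcor, so that $\perturb{\what\delta}(1\xt\lambda_{s\inv})=1\xm\lambda_{s\inv}\xm 1$; multiplying then gives $\relcom(\perturb{\what\delta})(i_G^\alpha(s)\xt 1)=i_G^\alpha(s)\xm 1\xm s$, which is what transports under $\psi_A$ to $\direct\alpha(i_G^\alpha(s))=i_G^\alpha(s)\xm s$. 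This cancellation of the $\lambda_s$ leg against $\lambda_{s\inv}$ is exactly where the content of the theorem lies; without it your argument never connects the value of $\perturb{\what\delta}$ on $j_A^\delta(i_G^\alpha(c))$ to the value of $\what\alpha\rify$ on $i_G^\alpha(c)$. (Your treatment of the generator $i_B(b)$ is fine, since there $i_E(j_A(i_B(b)))=i_B(b)\xt 1$ genuinely does lie in the relative commutant and the cocycle acts trivially.) The repair is short, but as written the proof is incomplete at its decisive point.
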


\begin{proof}
Let:
\begin{itemize}
\item
$(A,\delta)=(B\rtimes_\alpha G,\what\alpha)$

\item
$(E,\beta)=(A\rtimes_\delta G,\what\delta)$

\item
$\wilde A=E\rtimes_\beta G=A\rtimes_\delta G\rtimes_{\what\delta} G$

\item
$\pi=i_E\circ j_A\:A\to M(\wilde A)$

\item
$\phi=\pi\circ i_B\:B\to M(\wilde A)$

\item
$U=\pi\circ i_G^\alpha\:G\to M(\wilde A)$

\item
$\nu=i_E\circ j_G\:C_0(G)\to M(\wilde A)$

\item
$\iota=j_G\rtimes G\:\KK\to M(\wilde A)$

\item
$V=i_G^\beta\:G\to M(\wilde A)$

\item
$W=(\nu\xm\id)(w_G)\in M(\wilde A\xm C^*(G))$.
\end{itemize}

Note that
when working with $A\xm \KK\xm C^*(G))$,
we can
--- and in fact will need to --- sometimes replace either the first or the second $\xm$ by $\xt$
(and back again), but we cannot replace both by $\xt$ at the same time.
Of course, any such replacement must be accompanied by the insertion of parentheses around the minimal tensor product.

Since $\delta$ is maximal and $\KK$ is nuclear,
we can identify:
\begin{align*}
\wilde A&=A\xt \KK
\\
\pi&=(\id\xt\lambda)\circ\delta
\\
\nu&=1\xt M
\\
V&=1\xt\rho.
\end{align*}
Further, we can write
\begin{align*}
\phi(b)&=i_B(b)\xm 1\righttext{for}b\in B\\
U_s&=i_G^\alpha(s)\xm \lambda_s\righttext{for}s\in G,
\end{align*}
although again we can replace $\xm$ by $\xt$.
Also, we have observed previously that
\[
A^m=\relcom(\wilde A,\iota)=A\xm 1,
\]
and so
by \lemref{C tensor D},
\[
\relcom(\wilde A\xm C^*(G),\iota\xm 1)=A^m\xm C^*(G)=A\xm 1\xm C^*(G),
\]
and
\[
W=1\xm (M\xm \id)(w_G)\in M(A\xm \KK\xm C^*(G)).
\]
The R-coaction directly produced from $\what\beta$ is
given on the generators by:
\begin{align*}
\direct\beta\circ i_E&=i_E\xm 1\\
\direct\beta\circ i_G^\beta(s)&=i_G^\beta(s)\xm s.
\end{align*}
Thus:
\begin{align*}
\direct\beta\circ \pi&=\pi\xm 1\\
\direct\beta\circ \phi(b)&=i_B(b)\xm 1\xm 1\righttext{for}b\in B\\
\direct\beta(U_s)&=i_G^\alpha(s)\xm \lambda_s\xm 1\righttext{for}s\in G\\
\direct\beta\circ\nu&=\nu\xm 1\\
\direct\beta(V_s)&=1\xm \rho_s\xm s\righttext{for}s\in G.
\end{align*}
Now we perturb by the $\direct\beta$-cocycle $W$,
getting an exterior-equivalent R-coaction $\perturb\beta=\ad W\circ \direct\beta$.
For $b\in B$, $s\in G$, and $f\in C_0(G)$ we have:
\begin{align*}
\perturb\beta\circ\phi(b)&=\ad (1\xm M\xm \id)(w_G)\bigl(i_B(b)\xm 1\xm 1\bigr)
\\&=i_B(b)\xm 1\xm 1,
\end{align*}
\begin{align*}
\perturb\beta(U_s)&=\ad (1\xm M\xm \id)(w_G)\bigl(i_G^\alpha(s)\xm \lambda_s\xm 1\bigr)
\\&=i_G^\alpha(s)\xm \ad (M\xt \id)(w_G)\bigl(\lambda_s\xt 1\bigr)
\\&\hspace{.5in}\text{(swapping $\xm$ for $\xt$ in $M(\KK\xt C^*(G))$)}
\\&=i_G^\alpha(s)\xm (\lambda_s\xt s)
\\&\hspace{.5in}\text{(since $(\lambda,M)$ is a covariant representation of $\deltag$)}
\\&=i_G^\alpha(s)\xm \lambda_s\xm s
\\&\hspace{.5in}\text{(because $\lambda_s\xt s\in M(\KK\xt C^*(G))$)},
\end{align*}
\begin{align*}
\perturb\beta\circ \nu(f)&=\ad (1\xm M\xm \id)(w_G)\bigl(1\xm M_f\xm 1\bigr)
\\&=1\xm M_f\xm 1
\\&\hspace{.5in}\text{($(M\xm \id)(w_G)$ commutes with $C_0(G)\xm 1$)},
\end{align*}
and
\begin{align*}
\perturb\beta(V_s)&=\ad (1\xm M\xm \id)(w_G)\bigl(1\xm \rho_s\xm 1\bigr)
\\&=1\xm \ad (M\xt \id)(w_G)(\rho_s\xt s)
\\&\hspace{1in}\text{(swapping $\xm$ for $\xt$ again)}
\\&=1\xm (\rho_s\xt 1)
\\&\hspace{1in}\text{(because $\ad (M\xt \id)(w_G)\circ \what\rt=\triv$)}
\\&=1\xm \rho_s\xm 1
\righttext{(swapping $\xt$ back to $\xm$).}
\end{align*}

Now we compute the coaction $\epsilon=\relcom(\perturb\beta)$ on the relative commutant
$A^m=\relcom(\wilde A,\iota)$.
Since $A$ is generated by $i_B(B)$ and $i_G^\alpha(C^*(G))$,
we can compute on the generators:
for $b\in B$ and $s\in G$ we have
\begin{align*}
\epsilon(i_B(b)\xt 1)
&=\epsilon\circ\phi(b)
\\&=i_B(b)\xm 1\xm 1
\end{align*}
and
\begin{align*}
\epsilon(i_G^\alpha(s)\xt 1)
&=\epsilon((i_G^\alpha(s)\xt \lambda_s)(1\xt \lambda_{s\inv}))
\\&=\epsilon(i_G^\alpha(s)\xt \lambda_s)\epsilon(1\xt \lambda_{s\inv})
\\&=\epsilon(U_s)(1\xm \lambda_{s\inv}\xm 1)
\\&\hspace{1in}\text{(since $\epsilon$ is a \kcor)}
\\&=(i_g^\alpha(s)\xm \lambda_s\xm s)(1\xm \lambda_{s\inv}\xm 1)
\\&=i_G^\alpha(s)\xm 1\xm s,
\end{align*}
and therefore
\begin{align*}
(\psi\xm\id)\circ\epsilon(i_B(b)\xt 1)
&=(\psi\xm\id)(i_B(b)\xm 1\xm 1)
\\&=i_B(b)\xm 1
\\&=\delta\rify(i_B(b))
\\&=\delta\rify\circ\psi(i_B(b)\xt 1)
\end{align*}
and
\begin{align*}
&(\psi\xm\id)\circ\epsilon(i_G^\alpha(s)\xt 1)
\\&\quad=(\psi\xm\id)(i_G^\alpha(s)\xm 1\xm s)
\\&\quad=i_G^\alpha(s)\xm s
\\&\quad=\delta\rify(i_G^\alpha(s))
\\&\quad=\delta\rify\circ\psi(i_G^\alpha(s)\xt 1).
\end{align*}
Thus the diagram
\[
\xymatrix{
A^m \ar[r]^-\epsilon \ar[d]_\psi^\simeq
&M(A^m\xm C^*(G)) \ar[d]^{\psi\xm\id}_\simeq
\\
A \ar[r]_-{\delta\rify}
&M(A\xm C^*(G))
}
\]
commutes, and so
\[
\delta\rify=\what\alpha\rify=\direct\alpha.
\qedhere
\]
\end{proof}

We now have two possible interpretations of the notation $\deltagr$;
the following corollary assures us that
they are consistent.

\begin{cor}
The R-ification of the canonical \coaction\ $\deltag$ on $C^*(G)$ coincides with the R-coaction $\deltagr$ introduced in \defnref{R coaction}.
\end{cor}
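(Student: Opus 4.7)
The plan is to realize $\delta_G$ as a dual coaction and then invoke \thmref{same}. Take $B=\C$ with the trivial action $\iota$ of $G$; the crossed product is $\C\rtimes_\iota G=C^*(G)$, with the canonical inclusion $i_G\:C^*(G)\to M(C^*(G))$ being the identity and $i_\C\:\C\to M(C^*(G))$ being the unit map. The dual \coaction\ $\what\iota$ is, by its defining property, the integrated form of the covariant representation $(i_\C\xt 1,(i_G\xt\id)\circ\deltag)$, which on a group element $s\in G$ sends $s\mapsto s\xt s$; this is exactly $\deltag$. Hence $(C^*(G),\deltag)=(\C\rtimes_\iota G,\what\iota)$, and since $\deltag$ is a dual coaction, it is maximal, so the R-ification $\deltag\rify$ is defined.

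Applying \thmref{same} to this action $(\,\C,\iota\,)$ gives
\[
\what\iota\rify=\direct\iota
\]
as R-coactions on $C^*(G)$. It therefore suffices to show that the dual R-coaction $\direct\iota$, defined in \defnref{dual R coaction} as the integrated form of
\[
\bigl(i_\C\xm 1,\,(i_G\xm\id)\circ\deltagr\bigr),
\]
coincides with the canonical R-coaction $\deltagr$ of \defnref{R coaction}.

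This last step is a direct check on group elements: $i_\C\xm 1$ is the unit map, and $(i_G\xm\id)\circ\deltagr$ sends $s\in G$ to $s\xm s$, so the integrated form of the pair is the homomorphism $s\mapsto s\xm s$ on $C^*(G)$, which is exactly $\deltagr$. Combining the two identifications yields $\deltag\rify=\direct\iota=\deltagr$.

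The only conceptual point to be careful about is that the labels match: $\deltagr$ is used both as a primitive object in \defnref{R coaction} and, through $\direct\iota$, as the R-ification of $\deltag$, and the argument above confirms these two usages agree. No serious obstacle is expected; the work is entirely bookkeeping, since \thmref{same} does all the heavy lifting and the identification of $\what\iota$ with $\deltag$ and of $\direct\iota$ with $\deltagr$ reduces to reading off the covariant representations on generators.
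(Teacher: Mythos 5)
Your proof is correct and follows exactly the route the paper intends: the corollary is stated immediately after \thmref{same} precisely because it is the special case $B=\C$ with the trivial action, where $\C\rtimes G=C^*(G)$, $\what\iota=\deltag$ (which is maximal as a dual coaction), and $\direct\iota=\deltagr$ on generators. The bookkeeping you carry out on the covariant representations is the whole content of the argument.
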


\section{Conclusion}\label{conclusion}

In the proof of \thmref{same} we never seemed to need crossed products by R-coactions,
or even covariant representations of R-coactions.
But we did need a bit of the theory of cocycles for R-coactions,
in particular the fact that if $\delta$ is an R-coaction and $W$ is a $\delta$-cocycle, then $\ad W\circ\delta$ is an R-coaction.

Although it is possible --- as Raeburn does in \cite{rae:representation} --- to develop the elementary theory of covariant representations of an R-coaction $(A,\delta)$ on Hilbert space,
and further to define a crossed product $A\rtimes_\delta G$
(using representations on Hilbert space for the universal property),
it would be problematic to try to put this in the more general context of covariant representations in multiplier algebras.
One specific problem is that, if
$(A,\delta\sify)$ is the \coaction\ associated to an R-coaction $(A,\delta)$,
and if $(\pi,\mu)$ is a covariant representation of $\delta\sify$ in $M(B)$, we do not know whether there is a corresponding covariant representation of $\delta$. Consequently, we would not have a satisfactory theory of crossed products using representations in multiplier algebras.
Moreover, as discussed in \cite{rae:representation} and \cite{fulldual}, even if crossed products of R-coactions are defined (using Hilbert-space representations), they will be naturally isomorphic to the crossed products by the associated \coaction s.
Consequently, we made no attempt to discuss crossed-product duality for R-coactions.


\providecommand{\bysame}{\leavevmode\hbox to3em{\hrulefill}\thinspace}
\providecommand{\MR}{\relax\ifhmode\unskip\space\fi MR }
\providecommand{\MRhref}[2]{%
  \href{http://www.ams.org/mathscinet-getitem?mr=#1}{#2}
}
\providecommand{\href}[2]{#2}

\end{document}